\documentclass[11pt,reqno]{amsart}
\usepackage[margin=2.6cm]{geometry}
\usepackage{amsmath,amssymb,amsthm,mathrsfs, mathtools}
\usepackage{booktabs,array,longtable}
\usepackage{microtype}
\usepackage{xcolor}
\usepackage{hyperref}
\usepackage{enumitem}
\usepackage[nameinlink,capitalise]{cleveref}
\usepackage{url}
\usepackage{tabularx}
\usepackage{ragged2e}

\setlist[enumerate,1]{label=\roman*., font=\normalfont}
\hypersetup{colorlinks=true,linkcolor=blue!55!black,citecolor=blue!55!black,
        urlcolor=blue!55!black}
\numberwithin{equation}{section}

\newtheorem{theorem}{Theorem}[section]
\newtheorem{proposition}[theorem]{Proposition}
\newtheorem{lemma}[theorem]{Lemma}
\newtheorem{corollary}[theorem]{Corollary}

\newtheorem{conjecture}[theorem]{Conjecture}
\newtheorem{question}[theorem]{Question}
\theoremstyle{definition}
\newtheorem{remark}[theorem]{Remark}
\newtheorem{construction}[theorem]{Construction}

\newtheorem{example}[theorem]{Example}

\newcommand{\PP}{\mathbb{P}}
\newcommand{\CC}{\mathbb{C}}
\newcommand{\ZZ}{\mathbb{Z}}
\newcommand{\QQ}{\mathbb{Q}}
\newcommand{\OO}{\mathcal{O}}
\newcommand{\cO}{\mathcal O}
\newcommand{\Est}{E_{\mathrm{st}}}
\newcommand{\hst}{h_{\mathrm{st}}}
\newcommand{\hprim}{h_{\mathrm{prim}}}
\newcommand{\estr}{e_{\mathrm{st}}}
\newcommand{\indst}{\operatorname{ind}_{\mathrm{st}}}
\newcommand{\E}{E}
\newcommand{\Bl}{\operatorname{Bl}}
\newcommand{\Sing}{\operatorname{Sing}}
\newcommand{\Exc}{\operatorname{Exc}}

\newcommand{\bir}{\sim_{\mathrm{bir}}}
\newcommand{\ord}{\operatorname{ord}}
\newcommand{\I}{\mathcal I}
\newcommand{\pr}{\operatorname{pr}}
\newcommand{\Proj}{\operatorname{Proj}}

\newcommand{\Yt}{\widetilde{Y}}
\newcommand{\wtYm}{\widetilde Y_{m,d}}
\newcommand{\Ytil}{\widetilde{Y}}
\newcommand{\prim}{\mathrm{prim}}

\newcommand{\codim}{\operatorname{codim}}

\title[Unified counterexamples to Batyrev's conjecture]{A unified family of counterexamples to Batyrev's non-negativity conjecture on stringy Hodge numbers}
\author{Dimitrios I. Dais}
\address{Department of Mathematics, University of Thessaly,  Lamia,  35100 Greece}
\email{ddais@uth.gr}

\begin{document}

\begin{abstract} 
Recently, Huang and Satriano determined the sharp dimension threshold for Batyrev's non-negativity conjecture on stringy Hodge numbers: the conjecture holds in dimensions at most four and fails in every dimension at least five.  We show that their counterexamples fit into a single broad three-parameter family $Y_{m,d}\times(\PP^1)^n$ with
\[
Y_{m,d}:=V\!\left(\sum_{i=0}^{m}s_i g_i(x)+h(x)\right)\subset \PP^{m+d+1},
\qquad \deg g_i=d-1,\quad \deg h=d,
\]
whose singular locus is a linear space \(\Lambda\cong\PP^m\). Blowing up \(\Lambda\) gives a log resolution with one smooth exceptional divisor of discrepancy one. We compute the Hodge--Deligne polynomials of both the resolution and the exceptional divisor from their projective-space fibrations over $\PP^d$, derive a master formula for \(\Est(Y_{m,d}\times(\PP^1)^n)\), and prove a structure formula
\[
\hst^{p,q}(Y_{m,d}\times(\PP^1)^n)=A_{p,q}+B_{p,q}-C_{p,q},
\]
where the three terms are non-negative, so that the only possible source of negativity is $-C_{p,q}$, which is governed by the primitive middle cohomology of \(Z_{m,d}=V(g_0,\ldots,g_m)\subset\PP^d\). Off the diagonal every negative entry equals $-C_{p,q}$, a binomially weighted sum of shifted primitive Hodge numbers of \(Z_{m,d}\). On the diagonal the behaviour  is governed by the parity of \(m+d\). We also obtain a complete classification for \(0\le m\le d-1\), \(d\ge3\), and \(n\ge1\). The only non-counterexamples are \((m,d)=(0,3)\) for every \(n \ge 1\), and \((m,d)=(2,3)\) for \(n\ge4\).
\end{abstract}
\subjclass[2020]{Primary 14C30; Secondary 14B05, 14E15, 14J17, 14J45}
\maketitle
\vspace{-1cm}
\section{Introduction}
\noindent Let $Y$ be a normal $\QQ$-Gorenstein complex variety with log-terminal
singularities and let $f\colon X\to Y$ be a log resolution with exceptional locus
$\Exc(f)=\bigcup_{i\in I}D_i$ which consists of smooth prime divisors with only normal crossings and
$K_X=f^*K_Y+\sum_{i\in I}a_iD_i$ (where $a_i=a(D_i,Y)>-1$). Batyrev's \textit{stringy} $E$-\textit{function}
\cite[Def.~3.1]{Bat98} is
\begin{equation}
        E_{\text{st}}(Y;u,v)\;=\; \sum_{J\subseteq I}E(D_{J}^{\circ };u,v)\prod_{j\in
                J}\frac{uv-1}{(uv)^{a_{j}+1}-1}
        \label{eq:batdef}
\end{equation}%
(being independent of the particular choice of $f$) where $D_{J}^{\circ }=D_{J}\mathbb{r}\bigcup \nolimits_{j\in I\mathbb{r}J}D_{j}$ with $D_{J}=\bigcap \nolimits_{j\in J}D_{j}$ for $\varnothing \neq J\subseteq I%
\text{ and  }D_{\varnothing }=X$, $E(-;u,v)$ is the Hodge--Deligne polynomial,  $e(-)=E(-;1,1)$ is the \textit{ordinary Euler number},
\begin{equation}
e_{\text{st}}(Y):=\text{ }\underset{u,v\longrightarrow 1}{\lim }E_{\text{st}%
}(Y;u,v)=\sum_{J\subseteq I}e(D_{J}^{\circ })\prod \limits_{j\in J}\frac{1}{%
        a_{j}+1} \label{ESTRINGY}
\end{equation}%
the \textit{stringy Euler number}, and  $\text{ind}_{\text{st}}(Y)=\min \left \{ \left. \ell \in \mathbb{Z}_{\ge 1}%
\right \vert e_{\text{st}}(Y)\in 1/\ell \, \mathbb{Z}\right \} $  the so-called \textit{stringy index} of $Y$. If $Y$ has at worst Gorenstein canonical singularities, then the expected boundedness of $\text{ind}_{\text{st}}(Y)$ by a constant depending on the dimension (see {\cite[Conj.~5.9]{Bat98}}) is in general false. (For simple counterexamples see {\cite[Remark~1.9]{Da01}}.)  If $Y$ is projective with Gorenstein canonical singularities and
$\Est(Y;u,v)$ happens to be a polynomial, the \emph{stringy Hodge numbers}
$\hst^{p,q}(Y)$ are defined by
$\Est(Y;u,v)=\sum_{p,q}(-1)^{p+q}\hst^{p,q}(Y)u^pv^q$. A further conjecture of Batyrev is formulated as follows:

\begin{conjecture}[{\cite[Conj.~3.10]{Bat98}}]\label{conj:bat}
        Let $Y$ be a projective variety with at worst Gorenstein canonical
        singularities and assume $\Est(Y;u,v)$ is a polynomial.  Then
        $\hst^{p,q}(Y)\ge 0$ for all $p,q$.
\end{conjecture}

\noindent Important classes of such projective varieties for which Conjecture \ref{conj:bat} is known to hold include 
\medskip \newline 
\noindent $\bullet$  those admitting  crepant resolutions (\cite[Thm.~3.12]{Bat98}),
\smallskip \newline  
\noindent $\bullet$ those having at worst quotient singularities  (cf. \cite[Cor.~6.11]{BD96}, \cite[Thm.~7.5]{Bat99a}, \cite[\S 3]{CR04}, \cite[Remarks 1.4 (2) and 2.17, as well as Thm. 3.15 and Cor. 3.16]{Yas04}), 
\smallskip \newline 
\noindent $\bullet$  those having at worst toroidal singularities (\cite[Cor.~6.13]{BD96}),
\smallskip \newline  
\noindent $\bullet$ those which are projectivisations of determinantal varieties  {\small (\cite[Thm.~1.3 and Cor.~5.4]{CZ25}),}  and
\newline 
\noindent $\bullet$  those (a) of dimension $3$ or (b) of dimension $\ge4$ with at most isolated singularities under certain discrepancy hypotheses with respect to a log resolution {\small (\cite[Thm. 3.1]{SV07}, \cite[\S 3]{S09})}.\medskip

\noindent Moreover, Olano \cite[Thm. B]{Ola21} proved that $\hst^{p,1}(Y)\ge 0$ for all $p$ without any additional restriction. On the other hand,  Satriano and Usatine \cite{SU26} recently \textit{disproved} Conjecture~\ref{conj:bat} in
dimension~$7$; Huang and Satriano \cite{HS26} then settled the question
completely, proving the conjecture in dimension $\le4$ and giving, in
\cite[Thm.~B]{HS26}, counterexamples in every dimension $\ge 5$.

\begin{construction}We fix integers \(m\ge0\), \(d\ge3\), set $N=m+d+1,$ 
and choose general forms
$
g_0,\ldots,g_m\in \CC[x_0,\ldots,x_d]_{d-1},
\ h\in\CC[x_0,\ldots,x_d]_d.
$
In coordinates
\([s_0:\cdots:s_m:x_0:\cdots:x_d]\) on \(\PP^N\), we put (for $x=(x_0,...,x_d)$)
\begin{equation}\label{eq:Ymd}
Y_{m,d}:=V\!\left(\sum_{i=0}^m s_i g_i(x)+h(x)\right)\subset\PP^N,
\qquad
\Lambda:=V(x_0,\ldots,x_d)\cong\PP^m.
\end{equation}
Here $V(....)$ denotes the common zero locus of the listed forms. We shall also use
\begin{equation}\label{eq:CZ}
Z_{m,d}:=V(g_0,\ldots,g_m)\subset\PP^d,
\qquad
Z'_{m,d}:=V(g_0,\ldots,g_m,h)\subset\PP^d.
\end{equation}
For general equations these are smooth complete intersections of dimensions
\begin{equation}\label{eq:DIASTA}
k=d-m-1,
\qquad
l=d-m-2=k-1,
\end{equation}
whenever the expected dimensions are non-negative, and empty otherwise. (We use here Bertini's Theorem in the form of  \cite[Ch.~III, Corollary~10.9]{Har77}.) The main theorem is as follows: \end{construction}

\begin{theorem}\label{thm:classification}
        Let \(d\ge3\), \(0\le m\le d-1\), and \(n\ge1\). Then
        \(Y_{m,d}\times(\PP^1)^n\) is an $(m+d+n)$-dimensional projective variety with Gorenstein terminal singularities whose stringy $E$-function is a polynomial, and provides a counterexample to Batyrev's non-negativity conjecture \emph{\ref{conj:bat}} except precisely in the following cases:
        \begin{enumerate}[label=(\roman*)]
                \item \((m,d)=(0,3)\), for every \(n\ge1\);\smallskip
                \item \((m,d)=(2,3)\) and \(n\ge4\).
        \end{enumerate}
        For \((m,d)=(2,3)\) and \(n=1,2,3\), negative diagonal entries occur. If \(d=m+1\ge4\), a negative diagonal entry occurs for every \(n\ge1\).
\end{theorem}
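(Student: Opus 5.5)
The plan has three parts: first the geometry, then an explicit formula for $\Est$, then a sign analysis.

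\textbf{Geometry.} First I identify the singularities of $Y_{m,d}$. The partials in $s_i$ are $g_i$, and the partials in $x$ are $\sum s_i\partial g_i+\partial h$. For general $g_i,h$, a Bertini-type dimension count shows that $\Sing Y_{m,d}$ is contained in $\Lambda$. Along $\Lambda$ the equation vanishes to order exactly $d-1$ in $x$.

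Next I blow up $\Lambda$. The result $X=\Bl_\Lambda Y$ sits inside $\Bl_\Lambda\PP^N$, which is the $\PP^{m+1}$-bundle $\PP(\OO^{m+1}\oplus\OO(1))$ over $\PP^d$. In the chart coordinates $x=t\xi$ the strict transform is $\sum s_ig_i(\xi)+t\,h(\xi)=0$. This is linear in the fibre coordinates $(s,t)$, so the projection $X\to\PP^d$ is:
\begin{itemize}
\item a $\PP^m$-bundle over $\PP^d\setminus Z'_{m,d}$;
\item a $\PP^{m+1}$-bundle over $Z'_{m,d}$.
\end{itemize}
The exceptional divisor $D=\{t=0\}$ is $\Lambda\times V\bigl(\sum s_ig_i(\xi)\bigr)\subset\PP^m\times\PP^d$. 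Over $\PP^d$ it is a $\PP^{m-1}$-bundle off $Z_{m,d}$ and a $\PP^m$-bundle over $Z_{m,d}$. Genericity of the $g_i$ makes both $X$ and $D$ smooth.

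The discrepancy follows from adjunction. Since $\mult_\Lambda Y=d-1$ and $\codim\Lambda=d+1$ in $\PP^N$, we get $a=(d+1-1)-(d-1)=1$. Hence $Y$ is Gorenstein (a hypersurface) and terminal, and the $(\PP^1)^n$ factor changes nothing.

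\textbf{The $E$-function.} Formula \eqref{eq:batdef} with one divisor of discrepancy $a=1$ gives
\[
\Est(Y)=E(X)-E(D)+\frac{E(D)}{1+uv}.
\]
Using the fibrations,
\begin{align*}
E(X)&=[\PP^m]\,[\PP^d]+(uv)^{m+1}E(Z'),\\
E(D)&=[\PP^m]\bigl([\PP^{m-1}][\PP^d]+(uv)^mE(Z)\bigr).
\end{align*}
Write $E(Z)=P_k(uv)+(-1)^k\hprim(Z)$, where $P_k$ is the part coming from the hyperplane classes and $\hprim(Z)$ is the primitive middle part. By Lefschetz, $Z'$ is a hyperplane-type section of $Z$ of degree $d$, so $E(Z')$ is determined by $P_{k-1}$ together with $\hprim(Z')$.

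Divisibility of $E(D)$ by $1+uv$ must be checked. The non-primitive part should be handled by an explicit identity. The primitive part enters $E(D)/(1+uv)$ as $[\PP^m](uv)^m\hprim(Z)/(1+uv)$, and this is where the parity of $k$ and the $\PP^m$ factor must combine. If divisibility fails, I would instead rewrite the whole expression so that the primitive classes of $Z$ and $Z'$ appear as a difference.

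Multiplying by $(1+uv)^n$ for the $(\PP^1)^n$ factor gives the master formula. I then sort terms as
\[
\hst^{p,q}=A_{p,q}+B_{p,q}-C_{p,q},
\]
where:
\begin{itemize}
\item $A$ is the diagonal $uv$-polynomial part;
\item $B$ collects the positive shifted primitive Hodge numbers of $Z'$;
\item $C$ is the binomially weighted sum $\sum_j\binom{n}{j}\hprim^{p-m-j,\,q-m-j}(Z)$-type term coming from $-E(D)\,uv/(1+uv)$.
\end{itemize}

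\textbf{Classification.} Off the diagonal, $A=0$, and the supports of $B$ and $C$ differ: $B$ lives on total degree $m+1+l+2j$, while $C$ lives on total degree $m+k+2j$, and $l=k-1$. So off-diagonal negativity occurs exactly when some off-diagonal $\hprim^{a,b}(Z)$ is nonzero. For a general complete intersection of $m+1$ forms of degree $d-1$ in $\PP^d$ this holds unless $Z$ has only $(a,a)$ primitive classes. Griffiths residue/Hirzebruch computations show this happens only for small cases:
\begin{itemize}
\item $(m,d)=(0,3)$: $Z$ is a conic, with $k=1$ and no $h^{1,0}$;
\item $(m,d)=(2,3)$: $Z$ is the intersection of three quadrics in $\PP^3$, i.e.\ 8 points, with $k=0$;
\item $d=m+1$: $Z$ is $(d-1)^{d}$ points.
\end{itemize}
In these cases everything is diagonal, and the sign depends on $k$ and on $A_{p,p}-C_{p,p}$. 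I would compute $A_{p,p}$ explicitly as binomial sums $\sum\binom{n}{j}$ times coefficients of Gaussian-type polynomials, and compare them with $C_{p,p}=\binom{n}{p-m}(\deg Z-1)$.

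\textbf{Main obstacle.} The diagonal comparison is the step I expect to be hardest.
\begin{itemize}
\item For $(2,3)$, $\deg Z-1=7$ must be compared with $A$. The diagonal coefficient grows with $n$, which should give exactly the threshold $n\ge4$.
\item For $d=m+1\ge4$, the count $(d-1)^d-1$ outgrows $A$ for all $n$.
\item For $(0,3)$, there is no primitive contribution at all.
\end{itemize}
I would verify these three diagonal computations by hand, and handle the generic case by exhibiting one explicit nonzero off-diagonal $\hprim(Z)$ via the Hodge numbers of complete intersections.
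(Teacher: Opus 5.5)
You have the paper's architecture: blow up $\Lambda$, apply the one-divisor formula with discrepancy $1$, use the fibration over $\PP^d$, split $\hst^{p,q}=A_{p,q}+B_{p,q}-C_{p,q}$, separate $B$ and $C$ by parity, then treat off-diagonal nonvanishing and the diagonal cases separately. However, the computations that decide the ``except precisely'' part are either wrong or missing.

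\textbf{Errors.} Your $E(D)$ carries a spurious factor $[\PP^m]$. The divisor $D$ is the $(1,d-1)$-hypersurface $V(\sum_i s_ig_i(y))\subset\Lambda\times\PP^d$, not a product with $\Lambda$. Your own fibre description ($\PP^{m-1}$ off $Z_{m,d}$, $\PP^m$ over it) gives $E(D)=[\PP^{m-1}][\PP^d]+(uv)^mE(Z_{m,d})$. The negative term comes from $-uv(1+uv)^{n-1}E(D)$, so it is $C_{p,q}=\sum_{j=0}^{n-1}\binom{n-1}{j}h^{p-m-1-j,\,q-m-1-j}_{\mathrm{prim,mid}}(Z_{m,d})$, not a $\binom{n}{j}$-sum with shift $m$.

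Your parity bookkeeping is also inconsistent as written. Since $m+1+l+2j=m+k+2j$, your $B$ and $C$ would land in the same total degree. Both pieces are actually shifted by $(uv)^{m+1}$, which gives total degrees $d+m+2j$ and $d+m+1+2j$. Your divisibility worry is moot: for $n\ge1$ the factor $(1+uv)^n$ clears $1/(1+uv)$ automatically. With your formulas the diagonal numbers for $(2,3)$ and for $d=m+1$ come out wrong; for $(2,3)$, $n=1$ the correct diagonal vector is $(1,2,3,-4,3,2,1)$. So the threshold $n\ge4$ cannot be extracted from them.

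You also misidentify $Z_{0,3}$. It is $V(g_0)\subset\PP^3$, a quadric \emph{surface} ($k=2$), not a conic. Since $m+d=3$ is odd, $h^{1,1}_{\mathrm{prim}}(Z_{0,3})=1$ gives a genuine diagonal $C$-term. Also $Z'_{0,3}$ is a genus-$4$ curve, which gives off-diagonal $B$-terms. So ``no primitive contribution at all'' and ``everything is diagonal'' are both false, and exception (i) requires an actual inequality. The inequality does hold. The diagonal part is $(1+uv)^{n-1}\bigl(L_{0,3}(u,v)-(uv)^2\bigr)=(1+uv)^n\bigl(1+uv+(uv)^2+(uv)^3\bigr)$, and the off-diagonal entries are $\hst^{j+2,j+1}=\hst^{j+1,j+2}=4\binom{n}{j}$.

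\textbf{Missing arguments.} You only assert that $Z_{m,d}$ has a nonzero off-diagonal primitive Hodge number for every $k\ge1$ except $(0,3)$. The paper proves this in three cases:
\begin{itemize}
\item $k=1$: the genus is $1+\frac{d(d-3)}{2}(d-1)^{d-1}\ge1$.
\item $k\ge2$ with $m\ge1$: $K_{Z_{m,d}}\cong\OO_{Z_{m,d}}(m(d-1)-2)$ with $m(d-1)-2\ge1$, so $h^{k,0}>0$.
\item $m=0$, $d\ge4$: $h^{1,d-2}_{\mathrm{prim}}(Z_{0,d})=\binom{2d-3}{d}>0$.
\end{itemize}

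For $d=m+1\ge4$, the claim that $(d-1)^d-1$ outgrows $A$ for all $n$ is not a proof. Locating a negative diagonal index uniformly in $n$ is awkward. The paper sidesteps this by evaluating the purely diagonal polynomial $(1+uv)^{n-1}\bigl[E(\PP^d;u,v)^2-(d-1)^d(uv)^d\bigr]$ at $u=v=1$. This gives $2^{n-1}\bigl((d+1)^2-(d-1)^d\bigr)<0$, so some $\hst^{r,r}$ must be negative.

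For $(2,3)$, the paper lists the coefficient vectors for $n=1,\dots,4$. It then uses $\Est(Y_{2,3}\times(\PP^1)^n)=(1+uv)^{n-4}\Est(Y_{2,3}\times(\PP^1)^4)$ to propagate non-negativity to all $n\ge4$.
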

\noindent \textbf{Notation and conventions.} We work over $\CC$.  $E(V;u,v)$ denotes the Hodge--Deligne polynomial,
a ring homomorphism $K_0(\mathrm{Var}_\CC)\to\ZZ[u,v]$. (Convention: $E(\varnothing;u,v):=0$.) By $\PP^{\nu}$ we denote the complex projective space of dimension $\nu\in \mathbb{Z}_{\ge 0}$. (Convention: $\PP^{-1}:=\varnothing$.)  Since $h^{p,q}(\PP^{\nu})=\delta_{p,q}$ for $0\le p,q \le \nu$, we shall throughout keep in mind the simple formula
\begin{equation}\label{eq:EPROJEC}
E(\PP^{\nu};u,v)=1+uv+\dots+(uv)^{\nu}. 
\end{equation}

\section{The one-divisor formula}\label{sec:one-divisor}

\begin{proposition}[One-divisor formula]\label{lem:one-divisor}
        Let \(Y\) be a normal projective \(\QQ\)-Gorenstein variety with log-terminal singularities. Assume that a log resolution \(f\colon X\to Y\) has exceptional locus equal to one smooth irreducible divisor \(D\), and
        \[
        K_X=f^*K_Y+D.
        \]
        Then
        \begin{equation}\label{eq:one-divisor}
                \Est(Y;u,v)=E(X;u,v)-\frac{uv}{1+uv}E(D;u,v).
        \end{equation}
\end{proposition}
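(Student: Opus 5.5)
The plan is to specialise Batyrev's definition \eqref{eq:batdef} to the case where the index set $I$ has exactly one element, and then use additivity of the Hodge--Deligne polynomial. Here $I=\{1\}$ with $D_1=D$ and discrepancy $a_1=1$. Since $D$ is a single smooth irreducible divisor, it is automatically a simple normal crossing divisor. The hypotheses of Proposition~\ref{lem:one-divisor} are therefore exactly those under which \eqref{eq:batdef} applies. By Batyrev's theorem, \eqref{eq:batdef} does not depend on the choice of log resolution, so we may compute with the given $f$.

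The sum in \eqref{eq:batdef} runs over two subsets $J$.
\begin{itemize}
\item For $J=\varnothing$ we have $D_\varnothing^\circ=X\setminus D$, and the empty product equals $1$.
\item For $J=\{1\}$ we have $D_{\{1\}}^\circ=D$, since there are no other divisors to remove. The weight is $\frac{uv-1}{(uv)^{2}-1}=\frac{1}{1+uv}$.
\end{itemize}
This gives
\[
\Est(Y;u,v)=E(X\setminus D;u,v)+\frac{1}{1+uv}E(D;u,v).
\]

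Next we use that $E$ factors through $K_0(\mathrm{Var}_\CC)$, which is stated in the conventions. Since $D$ is closed in $X$, we have $E(X\setminus D)=E(X)-E(D)$. Substituting, the coefficient of $E(D)$ becomes
\[
-1+\frac{1}{1+uv}=-\frac{uv}{1+uv},
\]
and this is exactly \eqref{eq:one-divisor}.

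There is no genuine obstacle. The only points needing care are that the resolution-independence of \eqref{eq:batdef} (Batyrev, via motivic integration) justifies using this particular $f$, and that the single-divisor exceptional locus trivially satisfies the normal crossing requirement.
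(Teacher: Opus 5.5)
Your proof is correct and is essentially the paper's argument: specialise Batyrev's definition to $I=\{1\}$, read off the weight $\frac{uv-1}{(uv)^2-1}=\frac{1}{1+uv}$ for $J=\{1\}$, and substitute $E(X\setminus D;u,v)=E(X;u,v)-E(D;u,v)$. Your remark on resolution-independence is harmless but not needed, since the statement already supplies the resolution $f$ to compute with.
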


\begin{proof}Here there is only one exceptional component, namely $D$.  Thus we may put $I=\{1\}$ in (\ref{eq:batdef}). There are only two subsets:
        $J=\varnothing$ and $J=\{1\}$. For $J=\varnothing$ we have
        $D_\varnothing^\circ=X\setminus D,$
        and there is no discrepancy factor.  For $J=\{1\}$ we have
        $D_{\{1\}}^\circ=D,$
        and because $a(D,Y)=1$ its factor is
        \[
        \frac{uv-1}{(uv)^{1+1}-1}
        =\frac{uv-1}{(uv)^2-1}
        =\frac{uv-1}{(uv-1)(uv+1)}
        =\frac{1}{1+uv}.
        \]
        Therefore
        \[
        \Est(Y;u,v)=E(X\setminus D;u,v)+\frac{1}{1+uv}E(D;u,v).\]
        The Hodge--Deligne polynomial is additive for a closed subvariety and its complement. Hence
        \[
        E(X;u,v)=E(X\setminus D;u,v)+E(D;u,v)
        \Longrightarrow
        E(X\setminus D;u,v)=E(X;u,v)-E(D;u,v).
        \]
        Substitution gives \eqref{eq:one-divisor}.
\end{proof}

\begin{corollary}[Clearing the denominator by multiplying by \((\PP^1)^n\)]\label{cor:prod}
        In the situation of Proposition~\emph{\ref{lem:one-divisor},} suppose in addition that $Y$ is
        Gorenstein.  Then $Y\times(\PP^1)^n$ is projective Gorenstein. The corresponding exceptional divisor $ D\times(\PP^1)^n$
   has the same discrepancy coefficient as $D$, namely $1$. In particular, the product has terminal singularities whenever $Y$ does, and for every $n\ge1$
        \begin{equation}\label{eq:prodformula}
                \Est(Y\times(\PP^1)^n;u,v)=(1+uv)^nE(X;u,v)-uv(1+uv)^{n-1}E(D;u,v)\;\in\;\ZZ[u,v].
        \end{equation}
\end{corollary}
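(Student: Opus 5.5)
The plan is to prove the statement in three steps: the product is Gorenstein and projective, the exceptional divisor keeps discrepancy one, and the product formula follows from Proposition~\ref{lem:one-divisor}.

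\emph{Step 1: Gorenstein and projective.} Since $(\PP^1)^n$ is smooth projective, $Y\times(\PP^1)^n$ is normal and projective. Its canonical divisor is
\[
K_{Y\times(\PP^1)^n}=\pr_1^*K_Y+\pr_2^*K_{(\PP^1)^n},
\]
which is Cartier because $K_Y$ is Cartier. Hence the product is Gorenstein.

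\emph{Step 2: the resolution and its discrepancy.} Take $F=f\times\mathrm{id}\colon X\times(\PP^1)^n\to Y\times(\PP^1)^n$. This is a log resolution, and its exceptional locus is the single smooth irreducible divisor $D\times(\PP^1)^n$. The relative canonical divisor is compatible with products:
\[
K_{X\times(\PP^1)^n}-F^*K_{Y\times(\PP^1)^n}=\pr_1^*(K_X-f^*K_Y)=\pr_1^*D=D\times(\PP^1)^n .
\]
So the discrepancy of $D\times(\PP^1)^n$ is $1$. The terminal claim follows: whenever the discrepancies of a single log resolution are all positive, every exceptional divisor over the variety has positive discrepancy, by the standard computation via further blowups. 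In fact here the product has terminal singularities directly from this resolution, since the only exceptional divisor has discrepancy $1>0$ and the divisor is smooth.

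\emph{Step 3: the formula.} Apply Proposition~\ref{lem:one-divisor} to the product and use that $E(-;u,v)$ is multiplicative with $E(\PP^1;u,v)=1+uv$:
\[
\Est(Y\times(\PP^1)^n;u,v)=(1+uv)^nE(X;u,v)-\frac{uv}{1+uv}(1+uv)^nE(D;u,v).
\]
For $n\ge1$ the factor $1+uv$ cancels, which gives \eqref{eq:prodformula}. Both terms are polynomials with integer coefficients, so the result lies in $\ZZ[u,v]$.

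The only mildly delicate point is the compatibility of discrepancies with products, which holds because $K$ of a product is the sum of the pullbacks of the factors' canonical divisors.
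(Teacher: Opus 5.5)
Your proof is correct and follows the paper's argument essentially step for step. Like the paper, you take the product resolution $f\times\mathrm{id}$, use $K$ of a product being the sum of the pulled-back canonical divisors to get discrepancy $1$ for $D\times(\PP^1)^n$, and then apply Proposition~\ref{lem:one-divisor} with multiplicativity of $E$ and $E(\PP^1;u,v)=1+uv$. One small point: your step ``$K$ Cartier, hence Gorenstein'' also needs $Y\times(\PP^1)^n$ to be Cohen--Macaulay, which holds because $Y$ is Gorenstein and $(\PP^1)^n$ is smooth; the paper's appeal to the exterior product of dualising sheaves relies on the same fact.
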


\begin{proof}
        Since $(\PP^1)^n$ is smooth and projective, the product $       X\times(\PP^1)^n$
        is smooth and the morphism
        \[
        \theta:=f\times\mathrm{id}:X\times(\PP^1)^n\longrightarrow
        Y\times(\PP^1)^n
        \]
        is proper and birational.  Since $f$ is an isomorphism away from $D$, the exceptional locus of $\theta$ is $     D\times(\PP^1)^n,$ 
        which is again a smooth irreducible divisor.  Thus $\theta$ is a log resolution.
        
        Let $T=(\PP^1)^n$.  The canonical bundle of a product satisfies
        $K_{X\times T}=\operatorname{pr}_X^*K_X+\operatorname{pr}_T^*K_T,$
        and similarly for $Y\times T$.  Since
        $ K_X=f^*K_Y+D,$ we obtain
        \[
        K_{X\times T} =\operatorname{pr}_X^*(f^*K_Y+D)+\operatorname{pr}_T^*K_T =\theta^*K_{Y\times T}+D\times T.
        \]
        The discrepancy of $D\times T$ is again $1$.  Hence the product resolution has the same discrepancy coefficient on the corresponding exceptional divisor; in particular, terminality is preserved under product with the smooth variety $T$. Projectivity is preserved under products. The Gorenstein property is also preserved because $Y$ is Gorenstein and $T$ is smooth; indeed, the dualising sheaf of a product is the exterior tensor product of the dualising sheaves. Now we apply Proposition \ref{lem:one-divisor} to $Y\times(\PP^1)^n$.  Multiplicativity of $E$ gives
        \[E(X\times T;u,v)=E(X;u,v)E(T;u,v),
        \ 
        E(D\times T;u,v)=E(D;u,v)E(T;u,v).\]  Since $   E(\PP^1;u,v)=1+uv,
        \  E(T;u,v)=(1+uv)^n,$
        we get
                \begin{equation*}
                                E_{\text{st}}(Y\times(\PP^1)^n;u,v)=E(X;u,v)(1+uv)^{n}-\tfrac{uv}{1+uv}%
                                E(D;u,v)(1+uv)^{n}
                \end{equation*}
                which can be rewritten as \eqref{eq:prodformula}.
\end{proof}
\section{The unified family: geometry, resolution, and Hodge--Deligne polynomials}\label{sec:geometry}
\begin{proposition}\label{Prop1}
        Let \(m\ge0\), \(d\ge3\), and let \(Y_{m,d}\) be as in \eqref{eq:Ymd}. Then
        $Y_{m,d}$ is an irreducible normal Gorenstein hypersurface of dimension \(m+d\), and
                \[
                \omega_{Y_{m,d}}\cong\mathcal O_{Y_{m,d}}(-m-2),
                \qquad
                \Sing(Y_{m,d})=\Lambda,
                \qquad
                \ord_\Lambda(Y_{m,d})=d-1.
                \]Moreover,  $Y_{m,d}$ is  a (singular) Fano variety. 
\end{proposition}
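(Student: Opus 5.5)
The plan is to work directly with the defining form $F=\sum_{i=0}^m s_i g_i(x)+h(x)$ of degree $d$ on $\PP^N$, $N=m+d+1$. Being a hypersurface, $Y_{m,d}$ is Cohen--Macaulay and Gorenstein, and adjunction gives
\[
\omega_{Y_{m,d}}\cong\OO_{Y_{m,d}}(d-N-1)=\OO_{Y_{m,d}}(-m-2).
\]
Since $-m-2<0$, the anticanonical class is ample. Hence, once normality is established, $Y_{m,d}$ is a (singular) Fano variety, and the statement follows from the remaining claims.

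For the singular locus, I will first treat points of $\Lambda$, where $x=0$. Every monomial of $F$ has $x$-degree at least $d-1\ge2$, so all partials of $F$ vanish on $\Lambda$ and $\Lambda\subset\Sing(Y_{m,d})$. The lowest-order part of $F$ along $\Lambda$ is $\sum s_i g_i$, of degree $d-1$ in $x$, which gives $\ord_\Lambda(Y_{m,d})=d-1$. Generically this part is nonzero at every point of $\Lambda$, since the $g_i$ are linearly independent for general choices (there are $m+1\le\dim\CC[x]_{d-1}$ of them).

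Away from $\Lambda$, i.e.\ where $x\ne0$, we have $\partial F/\partial s_i=g_i(x)$. A singular point with $x\neq0$ must therefore satisfy $g_0(x)=\cdots=g_m(x)=0$, so $[x]\in Z_{m,d}$. Euler's relation then forces $F=h(x)=0$, so $[x]\in Z'_{m,d}$. The $x$-partials read $\sum_i s_i\,\partial g_i/\partial x_j+\partial h/\partial x_j=0$ for all $j$. This says that the differential of $h$ at $[x]$ lies in the span of the differentials of the $g_i$, modulo the Euler direction. That contradicts the smoothness of the complete intersection $Z'_{m,d}=V(g_0,\dots,g_m,h)$ of the expected dimension, which holds by Bertini as recorded in \eqref{eq:DIASTA}. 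If $Z'_{m,d}$ is empty, there is nothing to check. I expect this step to be the main obstacle: one must argue carefully that the Jacobian of $(g_0,\dots,g_m,h)$ has full rank $m+2$ at every point of $Z'$, including the case where the cone point enters via Euler's formula. To handle it, I will treat the affine cone and use that the vector $(s,1)$ gives a nontrivial linear relation among the rows of that Jacobian.

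Next, normality and irreducibility. $\Sing(Y_{m,d})=\Lambda$ has dimension $m$, while $\dim Y_{m,d}=m+d$, so the codimension is $d\ge3\ge2$. A hypersurface is $S_2$, and smoothness in codimension one gives $R_1$, so Serre's criterion yields normality. A connected normal variety is irreducible, and $Y_{m,d}$ is connected because it is a hypersurface in $\PP^N$ with $N\ge2$. Alternatively, irreducibility follows because $F$ is irreducible: a factorization of $F$ would make its singular locus contain the intersection of the two factors, which has dimension $N-2>m$.
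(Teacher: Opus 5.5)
Your proof is correct. For most claims it coincides with the paper's proof: Gorenstein via adjunction, $\Lambda\subset\Sing(Y_{m,d})$ because every monomial has $x$-degree $\ge d-1\ge2$, the initial form $\sum_i s_ig_i$ giving $\ord_\Lambda=d-1$, Serre's criterion for normality, and ampleness of $\OO(m+2)$. For the singular locus off $\Lambda$, both you and the paper play $\sum_i s_i\nabla g_i(x)+\nabla h(x)=0$ against the rank-$(m+2)$ Jacobian of the smooth complete intersection $Z'_{m,d}$. This relation holds exactly on the affine cone, so no ``modulo the Euler direction'' correction is needed.

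The genuine difference is irreducibility. The paper proves it first and independently. Since $F$ has $s$-degree one over $\CC[x]$, any factorisation has a factor $\Phi(x)$ dividing $\gcd(g_0,\dots,g_m,h)$. That gcd is $1$ for general data, which requires a separate genericity count.

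You instead derive irreducibility \emph{from} $\Sing(Y_{m,d})=\Lambda$, in one of two ways:
\begin{itemize}
\item normality plus connectedness of hypersurfaces in $\PP^N$ with $N\ge2$;
\item observing that a factorisation $F=\Phi\Psi$ would put $V(\Phi)\cap V(\Psi)$, of dimension $\ge N-2=m+d-1>m$, inside the singular locus.
\end{itemize}
This is legitimate because your Jacobian computation is done on the scheme $V(F)$. Hence $R_1$ and $S_2$ hold before reducedness is known: a repeated factor would make $\nabla F$ vanish along a divisor.

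Your route is more economical, since it uses only the genericity already needed for smoothness of $Z'_{m,d}$. The paper's route works under the weaker hypothesis that $g_0,\dots,g_m,h$ have no common factor, and it does not depend on the singular-locus computation.

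One small slip: your side remark that the $g_i$ are linearly independent because $m+1\le\dim\CC[x]_{d-1}=\binom{2d-1}{d}$ fails for large $m$, and the proposition allows any $m\ge0$. The remark is not needed, though. $\ord_\Lambda$ is the $I_\Lambda$-adic order, which equals $d-1$ as soon as some $g_i\neq0$.
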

\begin{proof} We prove the assertions separately. \medskip \newline $\bullet $ $Y_{m,d}$ \textit{is an irreducible hypersurface.} The defining polynomial $F(s,x)=\sum_{i=0}^{m}s_{i}g_{i}(x)+h(x)$ is homogeneous of total degree $d$. $Y_{m,d}$ is a degree-$d$ hypersurface of dimension $N-1=m+d$. We set $R:=\mathbb{C}[x_{0},\dots ,x_{d}]$ and consider $F=F(s,x)$ as an element of $R[s_{0},...,s_{m}]$ (with $x_{0},...,x_{d}$ regarded as coefficients). The \textit{total} $s$-degree (where $s=(s_{0},...,s_{m})$) is additive under multiplication and $\deg _{s}(F(s,x))$  over $R$ equals $1$ because for $i\in \{0,...,m\}$\begin{equation*} \left[ \deg _{s}(s_{i}g_{i}(x))=\deg _{s}(s_{i})+\deg _{s}(g_{i}(x))=1+0=1,\ \deg _{s}(h(x))=0\right] \Longrightarrow \deg _{s}(F(s,x))=1. \end{equation*}Suppose that $F(s,x)$ is reducible. Then there exists a non-trivial factorisation, say\begin{equation} F(s,x)=\Phi (s,x)\Psi (s,x)\text{ \  \ in \ }R[s_{0},...,s_{m}]=\mathbb{C}[s_{0},...,s_{m},x_{0},\dots ,x_{d}].  \label{FACTORISATION} \end{equation}Since $R$ is an integral domain,  \[
        \begin{array}{l}
                \deg _{s}(\Phi (s,x)\Psi (s,x))=\deg _{s}(\Phi (s,x))+\deg _{s}(\Psi
                (s,x))=\deg _{s}(F(s,x))=1\medskip  \\ 
                \Longrightarrow (\deg _{s}(\Phi (s,x)),\deg _{s}(\Psi (s,x)))\in
                \{(0,1),(1,0)\}.%
        \end{array}%
        \]
       After interchanging $\Phi$ and $\Psi,$ if necessary, we may assume that $\deg _{s}(\Phi (s,x))=0,$ i.e., that $\Phi (s,x)=\Phi (x)\in $ $R$ (involving no $s_{i}$'s) is a non-constant polynomial. Since $\deg _{s}(\Psi (s,x))=1,$ the most general form of $\Psi (s,x)$ is\begin{equation*} \Psi (s,x)=\psi _{0}(x)s_{0}+\cdots +\psi _{m}(x)s_{m}+\psi (x),\text{ \ where \ }\psi _{0}(x),...,\psi _{m}(x),\psi (x)\in R. \end{equation*}Multiplying out $\Phi (s,x)\Psi (s,x)$ gives \begin{equation*} F(s,x)=\sum_{i=0}^{m}s_{i}g_{i}(x)+h(x)=\sum_{i=0}^{m}s_{i}\Phi (x)\psi _{i}(x)+\Phi (x)\psi (x), \end{equation*}and since $1,s_{0},...,s_{m}$ are linearly independent over $R,$ by comparison of coefficients we obtain\begin{equation*} \left.  \begin{array}{l} g_{i}(x)=\Phi (x)\psi _{i}(x),\  \forall i\in \{0,...,m\} \medskip \\  h(x)=\Phi (x)\psi (x)\end{array}\right \} \Longrightarrow \Phi (x)\mid \text{gcd}(g_{0}(x),...,g_{m}(x),h(x)). \end{equation*}By assumption, $g_{0}(x),...,g_{m}(x),h(x)$ are sufficiently general. Let us fix a $d'\in\{1,...,d-1\}$. The tuples $(g_0(x),....,g_m(x),h(x))$ admitting a common homogeneous factor of degree $d'$ form a proper closed subset of the parameter space (it is the image of a projective bundle of smaller dimension). Since $d'$ ranges over a finite set, the union of these loci is a proper closed subset, and a general tuple lies \textit{outside} it.  So for general choices, gcd$(g_{0}(x),...,g_{m}(x),h(x))=1$ and  $\Phi \left( x\right) $ would be necessarily constant, contradicting the assumption that the factorisation \eqref{FACTORISATION} is non-trivial.  Thus $Y_{m,d}$ is irreducible.
        
        \medskip \noindent $\bullet $ \textit{Canonical sheaf and Gorenstein property.} A hypersurface is a local complete intersection, hence Gorenstein. Adjunction (\cite[Ch.~III, Thm.~7.11]{Har77}) for degree-$d $ hypersurfaces in $\mathbb{P}^{N}$ gives  \begin{equation*}
                \left. 
                \begin{array}{l}
                        \omega _{Y_{m,d}}\cong \bigl(\omega _{\mathbb{P}^{N}}\otimes \mathcal{O}_{%
                                \mathbb{P}^{N}}(d)\bigr)|_{Y_{m,d}}\medskip  \\ 
                        \omega _{\mathbb{P}^{N}}\cong \mathcal{O}_{\mathbb{P}^{N}}(-N-1)%
                \end{array}%
                \right \} \Longrightarrow \omega _{Y_{m,d}}\cong \mathcal{O}_{Y_{m,d}}(d-N-1)=%
                \mathcal{O}_{Y_{m,d}}(-m-2).
        \end{equation*}
        
        \noindent $\bullet $ \textit{Every point of $\Lambda$ is singular}. The partial derivatives are  \begin{equation*} \frac{\partial F}{\partial s_{i}}=g_{i}(x),\text{ }\forall i\in \{0,...,m\},\  \  \text{and \ }\frac{\partial F}{\partial x_{j}}=\sum_{i=0}^{m}s_{i}\frac{\partial g_{i}}{\partial x_{j}}(x)+\frac{\partial h}{\partial x_{j}}(x),\  \forall j\in \{0,...,d\}. \end{equation*}Along $\Lambda $, all $x_{j}$ vanish; since $\deg \left( g_{i}\right) \geq 2$ and $\deg \left( h\right) \geq 3$,  \begin{equation*} g_{i}(0)=0,\  \  \frac{\partial g_{i}}{\partial x_{j}}(0)=0,\text{ \ and\  }\frac{\partial h}{\partial x_{j}}(0)=0,\  \  \forall (i,j)\in \{0,...,m\} \times \{0,...,d\}. \end{equation*}Hence all first derivatives of $F(s,x)$ vanish along $\Lambda $ and therefore   $\Lambda \subseteq $ Sing$(Y_{m,d})$.
        
        \medskip \noindent $\bullet $ \textit{There are no singular points away from $\Lambda$.} Conversely, suppose $[s:x]\in \Sing(Y_{m,d})$ with $x\neq 0$. Then all $g_{i}(x)=0$, and because $[s:x]\in Y_{m,d}$, also $h(x)=0$; thus $[x]\in Z_{m,d}^{\prime }$. The remaining singularity equations give  \begin{equation} \sum_{i=0}^{m}s_{i}\nabla g_{i}(x)+\nabla h(x)=0.  \label{NABLES} \end{equation}By the Jacobian criterion (\cite[Ch.~I, \S 5, ex. 5.8, p. 37]{Har77}) for the smooth complete intersection $Z_{m,d}^{\prime }$ at $[x]$ the Jacobian matrix  \begin{equation*} J_{Z_{m,d}^{\prime }}(x)={\footnotesize \left(  \begin{array}{ccc} \frac{\partial g_{0}}{\partial x_{0}}\left( x\right)  & \cdots  & \frac{\partial g_{0}}{\partial x_{d}}\medskip \left( x\right)  \\  \vdots  &  & \vdots  \\  \frac{\partial g_{m}}{\partial x_{0}}\left( x\right)  \medskip  & \cdots  & \frac{\partial g_{m}}{\partial x_{d}}\left( x\right)  \medskip \\  \frac{\partial h}{\partial x_{0}}\left( x\right)  & \cdots  & \frac{\partial h}{\partial x_{d}}\left( x\right)  \end{array}\right) } \end{equation*}must have rank $m+2.$ Equivalently, its defining gradients are linearly independent. \eqref{NABLES} is impossible because the coefficient of $\nabla h(x)$ is one. If $Z_{m,d}^{\prime }=\varnothing $, no such $[x]$ exists in the first place, and the assertion is vacuous. Thus $\Sing(Y_{m,d})=\Lambda $.\medskip
        
        \noindent $\bullet $ \textit{Multiplicity along }$\Lambda $\textit{.} Let $I_{\Lambda }=(x_{0},\ldots ,x_{d})$. For positive integers $r$ the power $I_{\Lambda }^{r}$ consists of polynomials whose monomials have total $x$-degree at least $r.$ (The variables $s_{i}$ are not counted.) The $I_{\Lambda }$-adic order of $F$ is defined as ord$_{\Lambda }(F)=\max \left \{ r:F\in I_{\Lambda }^{r}\right \} .$ (See \cite[Ch. 5]{Eis95} and \cite[Ch.~II, \S 7]{Har77}.) Each  $g_{i}(x)$ is homogeneous of degree $d-1$ in $x_{0},...,x_{d}$ and every monomial of $g_{i}(x)$ contains exactly $d-1$ $x$-factors. Therefore, $g_{i}(x)\in I_{\Lambda }^{d-1}.$ Multiplying by $s_{i}$ does not change the number of $x$'s, so\begin{equation*} \left.  \begin{array}{r} \left[ s_{i}g_{i}(x)\in I_{\Lambda }^{d-1},\text{ }\forall i\in \{0,...,m\}\right] \Longrightarrow \sum_{i=0}^{m}s_{i}g_{i}(x)\in I_{\Lambda }^{d-1}\medskip  \\  h(x)\in I_{\Lambda }^{d}\subset I_{\Lambda }^{d-1}\end{array}\right \} \Longrightarrow F(s,x)\in I_{\Lambda }^{d-1}. \end{equation*}Splitting $F(s,x)$ according to its $x$-degree, we see that  
        \[
        F(s,x)=\underset{x\text{-degree }d-1}{\underbrace{\sum%
                        \nolimits_{i=0}^{m}s_{i}g_{i}(x)}}+\underset{x\text{-degree }d}{\underbrace{%
                        h(x)}}\in I_{\Lambda }^{d-1}\mathbb{r}I_{\Lambda }^{d}\Longrightarrow
        \left \{ 
        \begin{array}{c}
                \text{in}_{I_{\Lambda }}(F(s,x))=\sum_{i=0}^{m}s_{i}g_{i}(x)\medskip \text{ }
                \\ 
                \text{(non-zero piece in the }I_{\Lambda }\text{-adic filtration)}%
        \end{array}%
        \right \} 
        \]%
        and consequently that ord$_{\Lambda }(F)=d-1.$ Geometrically, this says that the hypersurface $Y_{m,d}$ has \textit{multiplicity} $d-1$ \textit{along the whole subspace} $\Lambda.$  \medskip \newline
        \noindent $\bullet $ \textit{Normality.} The singular locus is $\Lambda \cong \mathbb{P}^m$ while $\text{dim}(Y_{m,d})=m+d$. Therefore \begin{equation*}
                \text{codim}_{Y_{m,d}}(\text{Sing}(Y_{m,d}))=d\geq 3.
        \end{equation*}%
        In particular, $Y_{m,d}$ is regular in codimension $1.$ Moreover,  since the ideal sheaf of $Y_{m,d}$ in $\mathbb{P}^{N}$ is generated by one element at every point, $Y_{m,d}$ is Cohen-Macaulay. This implies the normality of $Y_{m,d}$. (See  \cite[Thm. 11.5]{Eis95},  \cite[Ch.~II, Proposition~8.23]{Har77} or \cite[Thm 23.8, p. 183]{Mat89}.) \medskip \newline
         \noindent $\bullet $ \textit{$Y_{m,d}$  is Fano.} $\mathcal O_{Y_{m,d}}(1)$ is the restriction of the hyperplane bundle $\mathcal O_{\mathbb{P}^N}(1)$, hence it is very ample. Since $m+2\ge 2$, the positive tensor power $\mathcal O_{Y_{m,d}}(m+2)$ is again very ample, and therefore ample. Thus $-K_{Y_{m,d}}$ is ample.
        \end{proof}

\begin{theorem}\label{thm:uniform-geometry}
Let \(m\ge0\), \(d\ge3\), and let \(Y_{m,d}\) and $\Lambda$ be as in \eqref{eq:Ymd}. Then:
\begin{enumerate}[label=(\roman*)]
\item The blow-up
\[
f\colon \widetilde Y_{m,d}:=\Bl_\Lambda Y_{m,d}\longrightarrow Y_{m,d}
\]
is a log resolution. Its exceptional locus is a single smooth irreducible divisor \(D_{m,d}\), and
\[
K_{\widetilde Y_{m,d}}=f^*K_{Y_{m,d}}+D_{m,d}.
\]
Hence \(a(D_{m,d},Y_{m,d})=1\) and \(Y_{m,d}\) is terminal.
\item There is a morphism
\(\rho\colon\widetilde Y_{m,d}\to\PP^d\) whose fibre is \(\PP^m\) over \(\PP^d\setminus Z'_{m,d}\) and \(\PP^{m+1}\) over \(Z'_{m,d}\). The induced map \(D_{m,d}\to\PP^d\) has fibre \(\PP^{m-1}\) over \(\PP^d\setminus Z_{m,d}\) and \(\PP^m\) over \(Z_{m,d}\). Consequently,
\begin{align}
E(D_{m,d};u,v)&=E(\PP^{m-1};u,v)E(\PP^d;u,v)+(uv)^mE(Z_{m,d};u,v),\label{eq:ED}\\
E(\widetilde Y_{m,d};u,v)&=E(\PP^m;u,v)E(\PP^d;u,v)+(uv)^{m+1}E(Z'_{m,d};u,v).\label{eq:EYtilde}
\end{align}
\item For every \(n\ge1\),
\begin{align}
\Est\bigl(Y_{m,d}\times(\PP^1)^n;u,v\bigr)
={}&(1+uv)^n\bigl(E(\PP^m;u,v)E(\PP^d;u,v)+(uv)^{m+1}E(Z'_{m,d};u,v)\bigr)\label{eq:uniform-Est-raw}\\
&-uv(1+uv)^{n-1}\bigl(E(\PP^{m-1};u,v)E(\PP^d;u,v)+(uv)^mE(Z_{m,d};u,v)\bigr).\nonumber
\end{align}
In particular, this is a polynomial in \(u,v\).
\end{enumerate}
\end{theorem}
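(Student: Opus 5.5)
The plan is to realise the blow-up inside the blow-up of the ambient space. Write $P:=\Bl_\Lambda\PP^N$, where $N=m+d+1$. Projection from $\Lambda$, namely $[s:x]\mapsto[x]$, extends to a morphism $P\to\PP^d$. This morphism exhibits $P$ as the projective bundle $\PP(\OO^{m+1}\oplus\OO(1))$ over $\PP^d$. Its fibre over $[x]$ is the $\PP^{m+1}$ with coordinates $[s:t]$, where the point $x$ is scaled by $t$.

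In the chart $x=t\xi$, the equation $F$ becomes
\[
t^{d-1}\Bigl(\sum_i s_i g_i(\xi)+t\,h(\xi)\Bigr).
\]
The strict transform $\widetilde Y_{m,d}$ is therefore cut out in $P$ by
\[
G:=\sum_i s_i g_i(\xi)+t\,h(\xi).
\]
This is a section of $\OO_P(1)\otimes\rho^*\OO(d-1)$, and it is linear in the fibre coordinates $[s:t]$. Since the blow-up of $Y_{m,d}$ along $\Lambda$ equals the strict transform of $Y_{m,d}$ in $P$, we get $\widetilde Y_{m,d}=V(G)\subset P$. The exceptional divisor of $P$ is $\{t=0\}$, so $D_{m,d}=V(t,\sum_i s_i g_i(\xi))$.

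Part (ii) is read off from this description. Over $[\xi]\in\PP^d$, the equation $G$ is a linear form in $(s,t)$ with coefficient vector $(g_0(\xi),\dots,g_m(\xi),h(\xi))$. This vector vanishes exactly on $Z'_{m,d}$. So the fibre of $\widetilde Y_{m,d}$ is a hyperplane $\PP^m$ off $Z'_{m,d}$ and the whole $\PP^{m+1}$ on $Z'_{m,d}$. Similarly, the fibre of $D_{m,d}$ is $\PP^{m-1}$ off $Z_{m,d}$ and $\PP^m$ on $Z_{m,d}$.

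Both maps are Zariski-locally trivial on each stratum, being projectivisations of kernels of vector bundle maps of constant rank. Hence $E$ is multiplicative on each stratum. Additivity then gives
\[
E(\widetilde Y_{m,d})=E(\PP^m)\bigl(E(\PP^d)-E(Z')\bigr)+E(\PP^{m+1})E(Z').
\]
Since $E(\PP^{m+1})-E(\PP^m)=(uv)^{m+1}$, this is \eqref{eq:EYtilde}. Formula \eqref{eq:ED} follows in the same way.

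For (i), smoothness is the main obstacle, and I would check it via the Jacobian of $G$ in local coordinates on $P$. At a point of $P$ with some fibre coordinate nonzero, if the coefficient vector is nonzero the fibre equation is a submersion in the fibre direction, so the point is smooth. At points over $Z'_{m,d}$, we need the $\xi$-derivatives $\sum_i s_i\nabla g_i+t\nabla h$ to be nonzero modulo the Euler relation. This holds because $Z'_{m,d}$ is a smooth complete intersection, so the gradients of $g_0,\dots,g_m,h$ are independent there and $(s,t)\neq0$.

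The same argument applied to $D_{m,d}=V(t,\sum_i s_i g_i)\subset\{t=0\}\cong\Lambda\times\PP^d$ shows that $D_{m,d}$ is smooth, using the smoothness of $Z_{m,d}$. $D_{m,d}$ is irreducible because it is a $\PP^{m-1}$-bundle over a dense open subset, with $\dim Z_{m,d}<d-1+\dots$ by a dimension count, so no extra component arises. One must handle $m=0$ separately: there the generic fibre of $D_{m,d}$ is empty, $D_{0,d}\cong Z_{0,d}$, and $d\ge3$ keeps it irreducible. The map $f$ is an isomorphism off $\Lambda$, so $f$ is a log resolution.

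For the discrepancy, blowing up the codimension-$(d+1)$ smooth centre $\Lambda$ gives $K_P=\pi^*K_{\PP^N}+dE$, and $\pi^*Y=\widetilde Y_{m,d}+(d-1)E$ because $\ord_\Lambda=d-1$ by Proposition~\ref{Prop1}. Adjunction then yields
\[
K_{\widetilde Y}=f^*K_Y+(d-(d-1))D=f^*K_Y+D.
\]
A discrepancy of $1>0$ gives terminality.

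Part (iii) is immediate: substitute \eqref{eq:ED} and \eqref{eq:EYtilde} into Corollary~\ref{cor:prod}. Polynomiality follows since every term is visibly in $\ZZ[u,v]$.
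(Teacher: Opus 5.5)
Your proposal is correct. The projective-bundle model of $\Bl_\Lambda\PP^N$, the fibrewise linear equation $G=\sum_i s_ig_i(\xi)+t\,h(\xi)$, parts (ii) and (iii), and the discrepancy computation via $K_P=\pi^*K_{\PP^N}+d\,\Xi$ and $\pi^*Y_{m,d}=\widetilde Y_{m,d}+(d-1)\Xi$ all match the paper.

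The genuine difference is how you obtain smoothness in (i). The paper writes $\widetilde Y_{m,d}\sim dH-(d-1)\Xi=(\pi,\rho)^*\OO(1,d-1)$ and checks that the forms $\sum_i s_ig_i+h$ fill $H^0(\PP^N,\I_\Lambda^{d-1}(d))$. It then applies Bertini to this very ample complete linear system, and again to $|\OO(1,d-1)|$ on $\Xi\cong\PP^m\times\PP^d$ for $D_{m,d}$. That gives smoothness and irreducibility in one stroke, but only for an unspecified general choice, which has to be intersected with the other genericity conditions.

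You instead use the Jacobian criterion in bundle coordinates. Your computation in fact shows that $\widetilde Y_{m,d}$ (resp.\ $D_{m,d}$) is smooth exactly when the gradients of $g_0,\dots,g_m,h$ (resp.\ $g_0,\dots,g_m$) are independent along $Z'_{m,d}$ (resp.\ $Z_{m,d}$). Those are precisely the smooth complete intersections already assumed in Proposition~\ref{Prop1} and in the Hodge computations, so your route makes the genericity hypothesis explicit and uniform.

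The price is that irreducibility is no longer automatic. Your garbled dimension count should read $\dim\bigl(D_{m,d}\cap\rho^{-1}(Z_{m,d})\bigr)=k+m=d-1<m+d-1$ for $m\ge1$. You should also state the analogous count $\dim\bigl(V(G)\cap\rho^{-1}(Z'_{m,d})\bigr)\le l+m+1=d-1<m+d$. This shows that $V(G)$ is irreducible and has no component inside $\Xi$. That is exactly what is needed to identify $V(G)$ with the strict transform, which you currently take for granted.
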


\begin{proof}
(i) Outside $\Lambda$, at least one $x_i$ is non-zero, so one may form the projective point
\[
[x_0:\cdots:x_d]\in \PP^d.
\]
This defines the rational map
$
\phi:\PP^N\dashrightarrow \PP^d,
\ \
[s:x]\longmapsto [x_0:\cdots:x_d].
$
It is undefined precisely on $\Lambda$, because at a point of $\Lambda$ one would obtain
$
[0:\cdots:0],
$
which is not a point of projective space.  Hence
$
\operatorname{Indet}(\phi)=\Lambda.
$
Equivalently, $\phi$ is the rational map associated with the $d+1$ global sections
\[
x_0,\ldots,x_d\in H^0(\PP^N,\OO_{\PP^N}(1)),
\]
whose common zero locus is exactly $\Lambda$. The ideal sheaf of the centre is
$
\I_{\Lambda}
$ determined by $(x_0,\ldots,x_d)$.
 Let
$
\pi:\Bl_{\Lambda}\mathbb{P}^N\to\mathbb{P}^N
$
be the blow-up of the ambient projective space, and let $\Xi$ be its exceptional divisor.  The blow-up is the relative Proj of the Rees algebra:
\[
\text{Bl}_{\Lambda}\PP^N
=
\Proj_{\PP^N}
\left(
\bigoplus_{n\ge 0}\I_{\Lambda}^n
\right).
\]
One of the basic features of the blow-up is that the pullback of the base ideal becomes invertible:
\[
\I_{\Lambda}\OO_{\text{Bl}_{\Lambda}\PP^N}\cong \OO_{\text{Bl}_{\Lambda}\PP^N}(-\Xi).
\]
Thus the common zero locus of $x_0,\ldots,x_d$ is replaced by the exceptional divisor; after removing $\Xi$ from the pulled-back system one is left with a base-point-free linear system on $\text{Bl}_{\Lambda}\PP^N$.  Consequently the rational map $\phi$ becomes an honest morphism
$
\rho: \text{Bl}_{\Lambda}\PP^N\longrightarrow \PP^d.
$
If we write $\mathbb{P}^{N}=\mathbb{P}(W\oplus W^{\prime })$ with 
\begin{equation*}
	\text{dim}(W)=m+1,\text{ \ dim}(W^{\prime })=d+1,\  \  \Lambda =\mathbb{P}%
	(W),\  \  \mathbb{P}(W^{\prime })=\mathbb{P}^{d},
\end{equation*}%
projection from $\Lambda $ to $\mathbb{P}(W^{\prime })$ induces the above morphism $\rho .$ More precisely, 
\begin{equation*}
	\mathbb{P}_{\mathbb P^{d}}(\mathcal{O}_{\mathbb{P}^{d}}{}^{\oplus (m+1)}\oplus \mathcal{O}_{%
		\mathbb{P}^{d}}(-1))\cong \text{Bl}_{\Lambda }\mathbb{P}^{N}\overset{\rho }{%
		\longrightarrow }\mathbb{P}^{d}
\end{equation*}
so that $\rho$ is a $\mathbb{P}^{m+1}$-bundle. On the other hand, 
\begin{equation*}
	\mathcal{N}_{\Lambda /\mathbb{P}^{N}}\cong \left( \left. \mathcal{O}_{%
		\mathbb{P}^{N}}(1)\right \vert _{\Lambda }\right) ^{\oplus (d+1)}\cong 
	\mathcal{O}_{\mathbb{P}^{m}}(1)^{\oplus (d+1)},
\end{equation*}%
and
$\Xi \cong \mathbb{P}(%
	\mathcal{N}_{\Lambda /\mathbb{P}^{N}})\cong \Lambda \times \mathbb{P}%
	^{d}\cong \mathbb{P}^{m}\times \mathbb{P}^{d}.$
Concretely, for a fixed $[x]\in\mathbb{P}(W^{\prime })=\mathbb{P}^{d}$, after choosing a representative $x=(x_0,...,x_d)$, the fibre $\rho ^{-1}([x])\cong \mathbb P^{m+1}$ has homogeneous coordinates $[s_{0}:\cdots :s_{m}:u]$. Under the morphism $%
\pi $, such a point maps to $[s_{0}:\cdots :s_{m}:ux_{0}:\cdots :ux_{d}]\in \mathbb{P}^{N}.$ Moreover,
$\Xi =\{u=0\}.$ On $\text{Bl}_{\Lambda}\PP^N\setminus \Xi$, where $\pi$ is an isomorphism onto $\PP^N\setminus\Lambda$, one has
$
\rho=\phi\circ\pi.
$
Thus $\text{Bl}_{\Lambda}\PP^N$ is the natural resolution of the indeterminacy of projection from $\Lambda$.
  We define $H:=\pi ^{\ast }\OO_{\PP^{N}}(1).$ The hyperplanes in $\PP%
^{N}$ containing $\Lambda $ are generated by $x_{0},\ldots ,x_{d}$; their
strict transforms define $\rho $. Hence 
\begin{equation}\label{HXI}
        H-\Xi =\rho ^{\ast }\OO_{\PP%
                ^{d}}(1).
\end{equation}
Equivalently, the map $(\pi ,\rho ):$ Bl$_{\Lambda }\PP%
^{N}\hookrightarrow \PP^{N}\times \PP^{d}$ realises $H$ as $\OO(1,0)|_{\text{%
                Bl}_{\Lambda }\PP^{N}}$ and $H-\Xi $ as $\OO(0,1)|_{\text{Bl}_{\Lambda }\PP%
        ^{N}}$. Since $Y_{m,d}$ has degree $d$ and vanishes along $\Lambda $ to order $d-1$,
its strict transform satisfies 
\begin{equation}
        \wtYm \sim dH-(d-1)\Xi =H+(d-1)(H-\Xi ).  \label{eq:class-strict}
\end{equation}%
By \eqref{HXI}, 
\[
\OO_{\text{Bl}_{\Lambda }\PP^{N}}\bigl(dH-(d-1)\Xi \bigr)\cong (\pi ,\rho
)^{\ast }\OO_{\PP^{N}\times \PP^{d}}(1,d-1).
\]%
Since $d-1\geq 2$, $\OO(1,d-1)$ is very ample on the product, and its
restriction to Bl$_{\Lambda }\PP^{N}$ is very ample. It is also useful to
see directly that the forms used in the construction fill the whole complete
linear system. The standard push-forward identity for a blow-up gives 
\[
H^{0}\bigl(\text{Bl}_{\Lambda }\PP^{N},\OO_{\text{Bl}_{\Lambda }\PP%
        ^{N}}(dH-(d-1)\Xi )\bigr)\cong H^{0}\bigl(\PP^{N},\I_{\Lambda}^{d-1}(d)\bigr)%
.
\]%
A degree-$d$ monomial lying in $I_{\Lambda}^{d-1}$ (with $I_{\Lambda }=(x_{0},\ldots ,x_{d})$ as in Proposition \ref{Prop1})  has $x$-degree  at least $d-1$ (counting multiplicities). Since its total degree is $d$, it is of
exactly one of the following two types:

\begin{itemize}
        \item degree $d$ in the $x$'s, contributing to $h(x)$; 
        
        \item degree $d-1$ in the $x$'s and degree $1$ in the $s$'s, contributing to 
        $\sum_{i=0}^{m} s_i g_i(x)$.
\end{itemize}
\noindent Thus a general choice of $(g_{0},\ldots ,g_{m},h)$ gives a general member of
this very ample complete linear system. Since Bl$_{\Lambda }\PP^{N}$ is
smooth, Bertini implies that $\wtYm$ is smooth and irreducible.

Restricting the leading term of $F(s,x)$ to $\Xi $ gives 
\begin{equation}
        D_{m,d}=\wtYm \cap \Xi =\left \{ ([s],[y])\in \PP^{m}\times \PP%
        ^{d}\; \left \vert \sum_{i=0}^{m}s_{i}g_{i}(y)=0\right. \right \} .
        \label{eq:D-equation}
\end{equation}%
This is a divisor of bidegree $(1,d-1)$, i.e. a member of $|\OO_{\PP%
        ^{m}\times \PP^{d}}(1,d-1)|.$ The bundle $\OO(1,d-1)$ is very ample, and the
section in \eqref{eq:D-equation} is general when the $g_{i}$ are general.
Bertini therefore gives that $D_{m,d}$ is smooth and irreducible. Since $%
f:=\pi |_{\wtYm}$ is an isomorphism away from $\Lambda $ and $f^{-1}(\Lambda
)=D_{m,d},$ we have $\Exc(f)=D_{m,d}.$ A single smooth divisor is
automatically a simple normal crossings divisor. Therefore $f$ is a log
resolution. The centre $\Lambda \cong \PP^{m}$ has codimension 
\[
\codim_{\PP^{N}}\Lambda =N-m=d+1.
\]%
Hence the canonical divisor of the ambient blow-up is 
$
        K_{\text{Bl}_{\Lambda }\PP^{N}}=\pi ^{\ast }K_{\PP^{N}}+d\,\Xi . 
$
Using \eqref{eq:class-strict} and adjunction on the smooth divisor $\wtYm%
\subset $ Bl$_{\Lambda }\PP^{N}$, we obtain 
\[
K_{\wtYm}=(K_{\text{Bl}_{\Lambda }\PP^{N}}+\wtYm)|_{\wtYm}=\bigl(\pi ^{\ast
}K_{\PP^{N}}+d\,\Xi +\pi ^{\ast }Y_{m,d}-(d-1)\Xi \bigr)|_{\wtYm}=\bigl(\pi
^{\ast }(K_{\PP^{N}}+Y_{m,d})+\Xi \bigr)|_{\wtYm}.
\]%
Adjunction on $Y_{m,d}$ gives $K_{Y_{m,d}}=(K_{\PP^{N}}+Y_{m,d})|_{Y_{m,d}},$
while $\Xi |_{\wtYm}=D_{m,d}$. Thus 
\begin{equation}
        K_{\wtYm}=f^{\ast }K_{Y_{m,d}}+D_{m,d}.  \label{eq:discrepancy}
\end{equation}%
Therefore $a(D_{m,d},Y_{m,d})=1>0.$ Since $f$ is a log resolution and every
exceptional divisor on this resolution has positive discrepancy (there is
only one), the standard discrepancy criterion implies that $Y_{m,d}$ is
terminal. \medskip

\noindent (ii) First, let us give an explicit model for the fibres of $\rho :$ Bl$%
_{\Lambda }\PP^{N}\longrightarrow \PP^{d}.$ One describes Bl$_{\Lambda }\PP%
^{N}$ (up to isomorphism) as the closure of the graph of $\phi$, i.e. as the incidence variety%
\[
\text{Bl}_{\Lambda }\PP^{N}\cong \left \{ \left. \left( \left[ s:x\right]
,[y] \right) \in \PP^{N}\times \PP^{d}\right \vert x_{i}y_{j}=x_{j}y_{i}\text{
        for all }  i,j \in \{0,..,d\} \right \} .
\]%
Now we fix a  $[y]\in \PP^{d}$. Every point in the fibre $\rho ^{-1}([y])$
has $x$ proportional to $y.$ Hence there is a scalar $\tau $ such that $%
x_{i}=\tau y_{i}$ for all $i\in \{0,...,d\}.$ (We shall abbreviate it by
writing $x=\tau y.$) A point of the original projective space
is $\left[ s_{0}:\cdots :s_{m}:x_{0}:\cdots :x_{d}\right] .$ In $\rho
^{-1}([y]),$ since $x=\tau y,$ this becomes $\left[ s_{0}:\cdots :s_{m}:\tau
y_{0}:\cdots :\tau y_{d}\right] .$ With $y$ fixed, the only varying
quantities are $s_{0},...,s_{m},\tau .$ Therefore $\rho ^{-1}([y])\cong \PP%
^{m+1}$ with homogeneous coordinates $\left[ s_{0}:\cdots :s_{m}:\tau \right]
.$ In addition, 
\[
F(s,x)=\sum \limits_{i=0}^{m}s_{i}g_{i}(x)+h(x)\Longrightarrow F(s,\tau
y)=\sum \limits_{i=0}^{m}s_{i}g_{i}(\tau y)+h(\tau y)
\]%
with $g_{i}(\tau y)=\tau ^{d-1}g_{i}(y),$ $h(\tau y)=\tau ^{d}h(y).$ Hence 
\[
F(s,\tau y)=\sum_{i=0}^{m}s_{i}g_{i}(\tau y)+h(\tau y)=\tau ^{d-1}\left(
\sum_{i=0}^{m}s_{i}g_{i}(y)+\tau h(y)\right) .
\]%
The factor $\tau ^{d-1}$ is the exceptional multiplicity; after removing it,
the strict transform is 
\begin{equation}
        \wtYm=V\left( \sum_{i=0}^{m}s_{i}g_{i}(y)+\tau h(y)\right) .
        \label{eq:fibre-linear}
\end{equation}%
This is one linear equation in $\PP^{m+1}$. If $[y]\notin Z_{m,d}^{\prime }$%
, the coefficient vector $(g_{0}(y),\ldots ,g_{m}(y),h(y))$ is non-zero, so %
\eqref{eq:fibre-linear} cuts out a hyperplane $\PP^{m}$. If $[y]\in
Z_{m,d}^{\prime }$, every coefficient is zero, so the equation vanishes
identically and the fibre is all of $\PP^{m+1}$. Thus 
\[
\rho ^{-1}(y)\cap \wtYm \cong 
\begin{cases}
        \PP^m,& \text{if}\ [y]\notin Z_{m,d}^{\prime },\medskip\\
        \PP^{m+1},& \text{if}\ [y]\in Z_{m,d}^{\prime }.
\end{cases}
\]%
For the exceptional divisor we impose $\tau =0$. 
\eqref{eq:fibre-linear} becomes $\sum_{i=0}^{m}s_{i}g_{i}(y)=0$ in $%
[s_{0}:\cdots :s_{m}]\in \PP^{m}$. If $y\notin Z_{m,d}$, this is a non-zero
linear equation and hence gives $\PP^{m-1}$; if $y\in Z_{m,d}$, it vanishes
identically and the fibre is $\PP^{m}$. (The convention $\PP%
^{-1}=\varnothing $ makes the statement correct also for $m=0$.) 

Over each of the two strata in question, the families just described are
projectivisations of vector bundles (equivalently, after choosing a non-zero
coefficient they are Zariski locally trivial projective-space fibrations).
Hence multiplicativity of $E$ applies.

For $\wtYm$, 
\begin{align*}
        E(\wtYm;u,v)& =E(\PP^{m};u,v)E(\PP^{d}\setminus Z_{m,d}^{\prime };u,v)+E(\PP%
        ^{m+1};u,v)E(Z_{m,d}^{\prime };u,v) \\
        & =E(\PP^{m};u,v)\bigl(E(\PP^{d};u,v)-E(Z_{m,d}^{\prime };u,v)\bigr)+E(\PP%
        ^{m+1};u,v)E(Z_{m,d}^{\prime };u,v) \\
        & =E(\PP^{m};u,v)E(\PP^{d};u,v)+\bigl(E(\PP^{m+1};u,v)-E(\PP^{m};u,v)\bigr)%
        E(Z_{m,d}^{\prime };u,v) \\
        & =E(\PP^{m};u,v)E(\PP^{d};u,v)+(uv)^{m+1}E(Z_{m,d}^{\prime };u,v).
\end{align*}%
Likewise, 
\begin{align*}
        E(D_{m,d};u,v)& =E(\PP^{m-1};u,v)E(\PP^{d}\setminus Z_{m,d};u,v)+E(\PP%
        ^{m};u,v)E(Z_{m,d};u,v) \\
        & =E(\PP^{m-1};u,v)\bigl(E(\PP^{d};u,v)-E(Z_{m,d};u,v)\bigr)+E(\PP%
        ^{m};u,v)E(Z_{m,d};u,v) \\
        & =E(\PP^{m-1};u,v)E(\PP^{d};u,v)+\bigl(E(\PP^{m};u,v)-E(\PP^{m-1};u,v)\bigr)%
        E(Z_{m,d};u,v) \\
        & =E(\PP^{m-1};u,v)E(\PP^{d};u,v)+(uv)^{m}E(Z_{m,d};u,v).
\end{align*}%
(iii) By \eqref{eq:discrepancy}, the resolution has one exceptional divisor
of discrepancy one. Therefore applying equality \eqref{eq:prodformula} of Corollary \ref{cor:prod} for $Y=Y_{m,d}$, $X=\wtYm$ and $D=D_{m,d}$  we obtain
\[
E_{\text{st}}(Y_{m,d}\times (\PP^{1})^{n};u,v)=(1+uv)^{n}E(\wtYm%
;u,v)-uv(1+uv)^{n-1}E(D_{m,d};u,v).
\]%
Substituting \eqref{eq:ED} and \eqref{eq:EYtilde} yields exactly %
\eqref{eq:uniform-Est-raw}. Since $n\geq 1$, every factor displayed is a
polynomial in $u,v$, so the result belongs to $\mathbb{Z}[u,v]$.
\end{proof}

\begin{proposition}\label{PROFANO}
	For every $n\ge 1$,
	\[
		\omega_{Y_{m,d}\times (\PP^{1})^{n}}
		\simeq
		\pr_0^*\cO_{Y_{m,d}}(-m-2)
		\otimes
		\bigotimes_{i=1}^{n}\pr_i^*\cO_{\PP^1}(-2),
	\]
	where $\pr_0:Y_{m,d}\times (\PP^{1})^{n}\to Y_{m,d}$ and $\pr_i:Y_{m,d}\times (\PP^{1})^{n}\to\PP^1$ are the projections.
	Consequently $-K_{Y_{m,d}\times (\PP^{1})^{n}}$ is ample (indeed very ample), and $Y_{m,d}\times (\PP^{1})^{n}$ is Fano.
\end{proposition}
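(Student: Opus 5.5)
The plan is to compute the dualising sheaf of the product from the dualising sheaves of its factors, and then deduce ampleness from the structure of line bundles on a product with a Segre-type embedding.

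\textbf{Step 1: the dualising sheaf.} Put $T=(\PP^1)^n$. Since $Y_{m,d}$ is Gorenstein by Proposition~\ref{Prop1} and $T$ is smooth, the product is Gorenstein. Its dualising sheaf is the exterior tensor product $\omega_{Y_{m,d}\times T}\cong \pr_0^*\omega_{Y_{m,d}}\otimes \pr_T^*\omega_T$. One way to see this is that the product is a local complete intersection in $\PP^N\times T$, cut out by the pullback of the single equation $F$. Adjunction in $\PP^N\times T$ then gives
\[
\omega\cong\bigl(\pr_0^*\OO_{\PP^N}(-N-1)\otimes\pr_T^*\omega_T\otimes\pr_0^*\OO_{\PP^N}(d)\bigr)|_{Y_{m,d}\times T}.
\]
We then substitute $\omega_{Y_{m,d}}\cong\OO_{Y_{m,d}}(-m-2)$ from Proposition~\ref{Prop1}. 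Iterating the same product formula on $T$ gives $\omega_T\cong\bigotimes_{i=1}^n \pr_i^*\OO_{\PP^1}(-2)$. Together these yield the displayed isomorphism.

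\textbf{Step 2: very ampleness.} Dualising, we get
\[
-K\cong \pr_0^*\OO_{Y_{m,d}}(m+2)\otimes\bigotimes_i\pr_i^*\OO_{\PP^1}(2).
\]
\begin{itemize}
\item $\OO_{Y_{m,d}}(m+2)$ is very ample, as noted at the end of the proof of Proposition~\ref{Prop1}: it is the restriction of $\OO_{\PP^N}(m+2)$, which gives the Veronese embedding.
\item $\OO_{\PP^1}(2)$ is very ample on each factor.
\end{itemize}
An exterior tensor product of very ample line bundles is very ample. Concretely, composing the individual embeddings with the Segre embedding of the product of the target projective spaces embeds $Y_{m,d}\times T$, and the pullback of $\OO(1)$ under this map is exactly the exterior product. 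Hence $-K$ is very ample, in particular ample.

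\textbf{Step 3: conclusion.} The product is projective with Gorenstein terminal singularities by Corollary~\ref{cor:prod}, and its anticanonical bundle is ample by Step~2. It is therefore Fano.

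The only point needing care is Step~1 in the singular setting, i.e.\ justifying the Künneth formula for dualising sheaves when one factor is singular. I would handle it by the adjunction argument in $\PP^N\times T$ above, which avoids general duality theory.
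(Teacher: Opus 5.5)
Your proposal is correct and follows the paper's proof. Like the paper, you combine the product formula for the dualising sheaf with $\omega_{Y_{m,d}}\cong\OO_{Y_{m,d}}(-m-2)$ and $\omega_{\PP^1}\cong\OO_{\PP^1}(-2)$, and then get very ampleness of the exterior product from the Segre embedding. The one difference is in justifying the product formula: the paper simply invokes the general fact that the dualising sheaf of a product of Gorenstein varieties is the exterior tensor product, whereas you derive it by adjunction for the Cartier divisor $V(\pr_0^*F)\subset\PP^N\times(\PP^1)^n$. This is a valid, self-contained justification, and it even yields the exponent $d-N-1=-m-2$ directly.
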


\begin{proof}
	For products of Gorenstein varieties (and in particular for a Gorenstein
	variety times a smooth variety), the canonical line bundle is the exterior
	tensor product of the canonical line bundles of the factors. Hence
	\[
	\omega_{Y_{m,d}\times (\PP^{1})^{n}}
	\simeq
	\pr_0^*\omega_{Y_{m,d}}
	\otimes
	\bigotimes_{i=1}^{n}\pr_i^*\omega_{\PP^1}.
	\]
	We know that
	$
	\omega_{Y_{m,d}}\simeq\cO_{Y_{m,d}}(-m-2),
	\ \text{and} \
	\omega_{\PP^1}\simeq\cO_{\PP^1}(-2).
	$
The external product of two very ample	line bundles is very ample on the product. (Geometrically this follows by taking the product embeddings and then
the Segre embedding.) This generalises inductively for finitely many very ample line bundles. Therefore $-K_{Y_{m,d}\times (\PP^{1})^{n}}$ is very ample, in particular ample.
\end{proof}

\begin{proposition} The anticanonical degrees of $Y_{m,d}$ and $Y_{m,d}\times (\PP^{1})^{n}$ are
\begin{equation}
	(-K_{Y_{m,d}})^{m+d}\;=\;d\,(m+2)^{m+d}, \label{antican1}
\end{equation}
	and, respectively, 
\begin{equation}
	(-K_{Y_{m,d}\times (\PP^{1})^{n}})^{m+d+n}\;=\;2^{\,n}\,d\,(m+2)^{m+d}\,\frac{(m+d+n)!}{(m+d)!}. \label{antican2}
\end{equation}
\end{proposition}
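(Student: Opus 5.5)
The plan is to reduce both degrees to intersection numbers of the hyperplane class. First, by Proposition~\ref{Prop1} we have $-K_{Y_{m,d}}=(m+2)H|_{Y_{m,d}}$, where $H$ is the hyperplane class of $\PP^N$. Since $Y_{m,d}\subset\PP^N$ is a degree-$d$ hypersurface of dimension $m+d$, the top self-intersection of $H$ on it is $H^{m+d}\cdot Y_{m,d}=\deg Y_{m,d}=d$. This holds for any irreducible projective subvariety, so the singularities along $\Lambda$ play no role: the degree of a hypersurface is the degree of its defining form. Homogeneity of top intersection products then gives $(-K_{Y_{m,d}})^{m+d}=(m+2)^{m+d}H^{m+d}=d\,(m+2)^{m+d}$, which is \eqref{antican1}.

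For the product we use Proposition~\ref{PROFANO}. Write $P=\pr_0^*(-K_{Y_{m,d}})$ and $Q_i=\pr_i^*\cO_{\PP^1}(2)$. Then $-K=P+Q_1+\cdots+Q_n$ on $Y_{m,d}\times(\PP^1)^n$, and its top power has degree $M:=m+d+n$. We expand the power multinomially. The relevant vanishing facts on the product $Y\times(\PP^1)^n$ are:
\begin{itemize}
\item $Q_i^2=0$, since it is pulled back from a curve;
\item $P^{a}=0$ for $a>m+d$, by dimension;
\item a monomial $P^{a}\prod Q_i^{b_i}$ of total degree $M$ with all $b_i\le 1$ and $a\le m+d$ must have $a=m+d$ and every $b_i=1$.
\end{itemize}
So only one type of term survives, with multinomial coefficient $\frac{M!}{(m+d)!\,1!^n}$. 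The product formula for intersection numbers on products, $\pr_0^*\alpha\cdot\prod\pr_i^*\beta_i=(\alpha)\prod(\beta_i)$, which follows from the Künneth decomposition or by representing the classes by general pullbacks, gives $P^{m+d}\prod Q_i=d(m+2)^{m+d}\cdot 2^n$. Combining these yields \eqref{antican2}.

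The only point needing care is the justification of the product intersection formula and of $H^{m+d}\cdot Y=d$ on a singular variety. I would handle both by working with Cartier divisors: all classes involved are line bundles, so intersection numbers are defined on arbitrary projective varieties. For the product formula I would choose general sections so that the intersection $D_1\cap\dots\cap D_{m+d}\subset Y$ consists of $d(m+2)^{m+d}$ reduced points, and take $n$ general fibres, each being $2$ points of $\PP^1$. The transverse product of these is then visibly $2^n d(m+2)^{m+d}$ reduced points. None of these steps is a serious obstacle.
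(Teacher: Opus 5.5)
Your proposal is correct and follows essentially the same route as the paper. The paper also writes $-K_{Y_{m,d}}=(m+2)\mathfrak{H}$ with $\mathfrak{H}^{m+d}=d$, expands multinomially on the product using that $\pr_i^*$-classes square to zero and $\pr_0^*$-classes vanish beyond degree $m+d$, and keeps only the multi-index $(m+d,1,\ldots,1)$; the differences are cosmetic (you expand in $P,Q_i$ rather than in $\mathfrak{H}_0,\mathfrak{H}_i$ with coefficients $m+2$ and $2$, and you spell out a transversality justification the paper leaves implicit).
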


\begin{proof}
Let $\mathfrak{H}:=c_{1}(\mathcal{O}_{Y_{m,d}}(1))$ be the hyperplane class
on $Y_{m,d},$ obtained by restricting the hyperplane class of $\mathbb{P}%
^{m+d+1}.$ Since $Y_{m,d}$ is a hypersurface of degree $d,$ $\deg
(Y_{m,d})=d=\mathfrak{H}^{m+d}.$ (Geometrically, intersecting $Y_{m,d}$ with 
$m+d$ sufficiently general hyperplanes leaves $d$ points, counted with
multiplicity.) Thus  \eqref{antican1} is true because
\begin{equation*}
	(-K_{Y_{m,d}})^{\text{dim}(Y_{m,d})}=(-K_{Y_{m,d}})^{m+d}=\left( (m+2)%
	\mathfrak{H}\right) ^{m+d}=(m+2)^{m+d}\mathfrak{H}^{m+d}=d(m+2)^{m+d},
\end{equation*}%
Moreover, $K_{Y_{m,d}\times (\mathbb{P}%
	^{1})^{n}}=	\pr_0^*K_{Y_{m,d}}+\sum_{i=1}^{n}\pr_i^*K_{\mathbb{P}%
	^{1}}$ with $-K_{Y_{m,d}}=(m+2)\mathfrak{H}$ and $-K_{\mathbb{P}^{1}}=2%
\mathfrak{h,}$ where $\mathfrak{h}$ is the class of a point on $\mathbb{P}%
^{1}.$ Defining $\mathfrak{H}_{0}:=	\pr_0^*\mathfrak{H}$ and $\mathfrak{%
	H}_{i}:=\pr_i^*\mathfrak{h}$ for all $i\in \{1,...,n\}$ we have $%
\mathfrak{H}_{0}^{m+d+1}=0$, $\mathfrak{H}_{i}^{2}=0,$ $\mathfrak{H}%
_{0}^{m+d}\mathfrak{H}_{1}\cdots \mathfrak{H}_{n}=d$ and {\small
\begin{equation*}
	(-K_{Y_{m,d}\times (\mathbb{P}^{1})^{n}})^{m+d+n}=\left[(m+2)\mathfrak{H}%
	_{0}+\sum_{i=1}^{n}2\mathfrak{H}_{i}\right] ^{m+d+n}=\sum \tfrac{(m+d+n)!}{%
		a_{0}!a_{1}!\cdots a_{n}!}(m+2)^{a_{0}}2^{a_{1}+\cdots a_{n}}\mathfrak{H}%
	_{0}^{a_{0}}\mathfrak{H}_{1}^{a_{1}}\cdots \mathfrak{H}_{n}^{a_{n}},
\end{equation*} }
\noindent where in all summands the exponents $a_{0},a_{1},...,a_{n}$ are integers $%
\geq 0$ with $a_{0}+a_{1}+\cdots +a_{n}=m+d+n.$  Since $\mathfrak{H}_{i}^{2}=0$, any term with $a_{i}\ge 2$ for some $i\in \{1,...,n\}$ vanishes, while $\mathfrak{H}_{0}^{m+d+1}=0$ implies $a_{0}\le m+d$. Hence a non-zero term must satisfy  $a_{0}\le m+d$ and $a_{i}\le 1$ for all $i\in \{1,...,n\}$. In view of the preceding constraint on the sum of the exponents, all these upper bounds must be attained. Thus the \textit{unique surviving} multi-index is $%
(a_{0},a_{1},...,a_{n})=(m+d,1,1,...,1).$  This means that 
\begin{equation*}
	(-K_{Y_{m,d}\times (\mathbb{P}^{1})^{n}})^{m+d+n}=\tfrac{(m+d+n)!}{(m+d)!}%
	(m+2)^{m+d}2^{n}\mathfrak{H}_{0}^{m+d}\mathfrak{H}_{1}\cdots \mathfrak{H}%
	_{n}=2^{n}d(m+2)^{m+d}\tfrac{(m+d+n)!}{(m+d)!},
\end{equation*}%
giving formula \eqref{antican2}.
\end{proof}

\begin{example}
	$(m,d,n)=(2,3,1)$: $(-K_{Y_{2,3}\times \mathbb{P}^{1}})^{6}$ $=2\cdot3\cdot4^5\cdot 6!/5!=2\cdot3\cdot1024\cdot6=36864$.
\end{example}

\begin{remark}\label{rem:rational}
(i) Over the function field \(\CC(\PP^d)\), equation \eqref{eq:fibre-linear} cuts a hyperplane \(\PP^m\) in \(\PP^{m+1}\). Hence $\Yt_{m,d}\bir\PP^m\times\PP^d$, and therefore \(Y_{m,d}\) is \textit{rational} (of dimension $m+d$).  Parameter $d$ controls the transverse multiplicity $d-1$ along $\Lambda$, while $m$ controls $\text{dim}(\Lambda)$. Thus rationality alone imposes no constraint on the pair (dim(Sing), transverse multiplicity) and the family realises many such pairs. \smallskip \newline
\noindent (ii)  $Y_{m,d}\times (\PP^{1})^{n}$ is \textit{rational} as well. Its dimension is $m+d+n$ and, since $(\PP^{1})^{n}$ is smooth,  its singular locus is $\text{Sing}(Y_{m,d}\times (\PP^{1})^{n})=\Lambda \times (\PP^{1})^{n}.$ 
 \smallskip \newline
\noindent (iii) Summing up, we see by using Corollary \ref{cor:prod}, (i) of Theorem \ref{thm:uniform-geometry} and Proposition \ref{PROFANO} that all the members of the (three-parameter) family  $(Y_{m,d}\times (\PP^{1})^{n})_{(m,d,n)}$ (with $m\ge 0, d\ge 3, n\ge 1$) are \textit{projective normal Gorenstein rational Fano varieties with terminal singularities.}   \smallskip \newline
\noindent (iv)  If $m\ge d$, then both $Z_{m,d}$ and $Z'_{m,d}$ are empty for general defining equations and no negativity arises, i.e. $h_{\text{st}}^{p,q}\bigl(Y_{m,d}\times (\PP^{1})^{n}\bigr)\ge 0$ for all $p,q$ and $Y_{m,d}\times (\PP^{1})^{n}$ is not a counterexample to Conjecture \ref{conj:bat}, because \eqref{eq:uniform-Est-raw}, $E(\varnothing;u,v):=0$ and the elementary identity 
\begin{equation}
(1+uv)E(\PP^{m};u,v)-uvE(\PP^{m-1};u,v)=E(\PP^{m+1};u,v) \label{ELIDENT}
\end{equation}
give
  \[\Est \bigl(Y_{m,d}\times (\PP^{1})^{n};u,v\bigr) 
=(1+uv)^{n-1}\bigl[E(\PP^{d};u,v)E(\PP^{m+1};u,v)]. \]
\end{remark}

\section{Master formula and the structure theorem}\label{sec:master}
\noindent  Throughout sections \ref{sec:master}-\ref{Comments} we shall assume that
$
0\le m\le d-1,
$
so that \(Z_{m,d}\) is non-empty. From the equalities \eqref{eq:uniform-Est-raw} and \eqref{ELIDENT}
one obtains%
\begin{align}
        \Est \bigl(Y_{m,d}\times (\PP^{1})^{n};u,v\bigr) \label{Promaster}
        ={}&(1+uv)^{n-1}\bigl[E(\PP^{d};u,v)E(\PP^{m+1};u,v)  \\
        &+(uv)^{m+1}((1+uv)E(Z_{m,d}^{\prime
        };u,v)-E(Z_{m,d};u,v))\bigr]. \nonumber
\end{align}
\begin{remark} If we wish to evaluate numerically the stringy $E$-polynomial \eqref{Promaster} of $%
        Y_{m,d}\times (\PP^{1})^{n},$ then we can use \eqref{eq:EPROJEC} for $\nu =d$ and $\nu =m+1,$
        together with the  lengthy formulas \eqref{eq:E-I} and   \eqref{E-II} which express the $E$-polynomials of the complete intersections $%
        Z_{m,d}$ and $Z_{m,d}^{\prime },$ respectively, by means of the parameters $m$ and $d$. On the other hand, if we are
        content to examine only the sign behaviour of the coefficients of the stringy 
        $E$-polynomial of $Y_{m,d}\times (\PP^{1})^{n}$, then Theorem \ref{thm:structure} and Theorem \ref{thm:classification} are far more convenient than the explicit formulas  \eqref{eq:E-I} and \eqref{E-II}. 
\end{remark}
\noindent Next, we define%
\begin{equation}
        L_{m,d}(u,v):=E(\PP^{d};u,v)E(\PP^{m+1};u,v)+(uv)^{m+1}(E(\PP%
        ^{d-m-2};u,v)-1). \label{eq:Lmd}
\end{equation}

\begin{lemma}\label{NONNEGLMD}
All coefficients of $L_{m,d}(u,v)$ are non-negative.
\end{lemma}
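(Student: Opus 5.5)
The plan is to observe that $L_{m,d}$ is a polynomial in the single variable $t:=uv$ alone, and then to compare coefficients of $t^j$ directly. By \eqref{eq:EPROJEC}, $E(\PP^{\nu};u,v)=\sum_{j=0}^{\nu}t^j$ for every $\nu\ge -1$, where the sum is empty for $\nu=-1$, in accordance with the convention $E(\varnothing;u,v)=0$. So it suffices to show that every coefficient of $L_{m,d}$, viewed as an element of $\ZZ[t]$, is non-negative.

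\emph{Step 1 (the product term).} Write
\[
E(\PP^d;u,v)E(\PP^{m+1};u,v)=\sum_{j=0}^{m+d+1}c_j\,t^j,
\]
where $c_j$ is the number of pairs $(a,b)$ with $a+b=j$, $0\le a\le d$ and $0\le b\le m+1$. Every $c_j$ with $0\le j\le m+d+1$ is at least $1$, since the pair $(\min(j,d),\,j-\min(j,d))$ is admissible. In fact $c_j=\min(j,d,m+1,m+d+1-j)+1$, but only the bound $c_j\ge1$ will be needed.

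\emph{Step 2 (the correction term).} Since $0\le m\le d-1$, we have $d-m-2\ge -1$, and three cases arise.
\begin{enumerate}[label=(\alph*)]
\item If $d-m-2\ge1$, then $t^{m+1}\bigl(E(\PP^{d-m-2};u,v)-1\bigr)=t^{m+2}+\cdots+t^{d-1}$. This has non-negative coefficients, so $L_{m,d}$ clearly has non-negative coefficients.
\item If $d-m-2=0$, the correction term is $t^{m+1}(1-1)=0$, and $L_{m,d}$ equals the product from Step 1.
\item If $d-m-2=-1$, i.e.\ $m=d-1$, the correction term is $-t^{m+1}=-t^{d}$. The only coefficient at risk is therefore that of $t^d$, which equals $c_d-1$. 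Since $d\le m+d+1$, Step 1 gives $c_d\ge1$, so $c_d-1\ge0$. (In fact $c_d=d+1$ here, because all pairs $(a,d-a)$ with $0\le a\le d$ are admissible.)
\end{enumerate}

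The only real obstacle is case (c). There the correction term is genuinely negative, and one has to check that the single monomial it affects, $t^{m+1}$, falls inside the support $0\le j\le m+d+1$ of the product term. This holds because $m+1=d\le m+d+1$. Everything else is bookkeeping with the convention $\PP^{-1}=\varnothing$.
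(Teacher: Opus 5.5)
Your proposal is correct and follows the paper's proof essentially verbatim: the same split into $d\ge m+3$, $d=m+2$ and $d=m+1$, with the only at-risk coefficient in the last case being that of $(uv)^{m+1}$. The one difference is that you need only the bound $c_d\ge 1$ there, whereas the paper computes this coefficient exactly as $m+2$, so that it equals $m+1\ge 1$ after the subtraction.
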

\begin{proof}\textit{First case}: $d\geq m+2.$ If $d=m+2,$ then $E(\PP%
        ^{d-m-2};u,v)-1=E(\PP^{0};u,v)-1=0.$ If  $d\geq m+3,$ then%
        \begin{equation*}
                E(\PP^{d-m-2};u,v)-1=uv+(uv)^{2}+\cdots +(uv)^{d-m-2}
        \end{equation*}%
        which has non-negative coefficients. Since $E(\PP^{d};u,v)E(\PP^{m+1};u,v)$
        also has non-negative coefficients, so does the polynomial $L_{m,d}(u,v).$  \medskip \newline 
\noindent \textit{Second case}: $d=m+1.$ We have $E(\PP^{d-m-2};u,v)=E(\PP^{-1};u,v)=0,
        $ and%
        \begin{equation*}
                L_{m,d}(u,v)=E(\PP^{d};u,v)E(\PP^{m+1};u,v)-(uv)^{m+1}=(E(\PP%
                ^{m+1};u,v))^{2}-(uv)^{m+1}.
        \end{equation*}%
        The coefficient of $(uv)^{m+1}$ in $(E(\PP^{m+1};u,v))^2$ equals the number of
        pairs $(a,b)\in \mathbb{ Z}^2$ satisfying 
        \begin{equation*}
                a+b=m+1,\  \  \ 0\leq a,b\leq m+1.
        \end{equation*}%
        There are exactly $m+2$ such pairs, namely \thinspace $%
        (0,m+1),(1,m),...,(m+1,0).$  After subtracting one copy of $(uv)^{m+1},$ the
        resulting coefficient is $m+1\geq 1.$ Every other coefficient is unchanged
        and non-negative.
\end{proof}

\begin{lemma}[Master formula]
For \(n\ge1\) the $E_{\text{\emph{st}}}$-polynomial of $Y_{m,d}\times (\PP^{1})^{n}$ equals
        \begin{equation}
                \fbox{$%
                        \begin{array}{c}
                                \Est \bigl(Y_{m,d}\times (\PP^{1})^{n};u,v\bigr)=\left( 1+uv\right)
                                ^{n-1}L_{m,d}(u,v) \medskip \\ 
                                -\varepsilon \bigl[\left( 1+uv\right) ^{n}(uv)^{m+1}P_{\text{\emph{prim}}%
                                }(Z_{m,d}^{\prime };u,v)+\left( 1+uv\right) ^{n-1}(uv)^{m+1}P_{\text{\emph{%
                                                        prim}}}(Z_{m,d};u,v)\bigr]\ ,%
                        \end{array}%
                        $}  \label{masterfor}
        \end{equation}
        where
        \begin{equation*}
                P_{\text{\emph{prim}}}(Z_{m,d}^{\prime };u,v)=\sum \limits_{a,b\ge 0,a+b=l}h_{\text{\emph{prim}}}^{a,b}(Z_{m,d}^{\prime })u^{a}v^{b}\text{ \  \emph{and\ }}\ 
                \text{ }P_{\text{\emph{prim}}}(Z_{m,d};u,v)=\sum \limits_{a,b\ge 0,a+b=k}%
                h_{\text{\emph{prim}}}^{a,b}(Z_{m,d})u^{a}v^{b}
        \end{equation*}%
        are the primitive Hodge polynomials \emph{\eqref{PRIMITIVHPOL}} of $V=Z_{m,d}^{\prime }$ and $%
        V=Z_{m,d},$ respectively, and $\varepsilon :=(-1)^{k}.$
\end{lemma}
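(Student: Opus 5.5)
Starting from \eqref{Promaster}, the whole proof comes down to expressing $(1+uv)E(Z'_{m,d};u,v)-E(Z_{m,d};u,v)$ through the primitive Hodge polynomials. The key tool is the Lefschetz hyperplane theorem for smooth complete intersections. For a smooth complete intersection $V\subset\PP^d$ of dimension $r\ge0$, its cohomology agrees with that of $\PP^d$ (namely $h^{p,p}=1$ for $0\le p\le r$) except in the middle degree $r$, where the primitive part is added. Since $E(V;u,v)=\sum_{p,q}(-1)^{p+q}h^{p,q}(V)u^pv^q$ for smooth projective $V$, this gives
\[
E(V;u,v)=E(\PP^{r};u,v)+(-1)^{r}P_{\mathrm{prim}}(V;u,v),
\]
with the convention that $P_{\mathrm{prim}}$ is defined as in \eqref{PRIMITIVHPOL}. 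The degenerate cases are handled the same way. When $r=0$, $V$ is a finite set of $\delta$ points, $P_{\mathrm{prim}}=\delta-1$, and the formula still holds. When $V=\varnothing$ (the case $l=-1$, i.e.\ $d=m+1$ for $Z'_{m,d}$), we take $E(\PP^{-1})=0$ and $P_{\mathrm{prim}}=0$.

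We apply this with $V=Z_{m,d}$ and $r=k$, and with $V=Z'_{m,d}$ and $r=l=k-1$. Because $(-1)^{l}=-\varepsilon$ and $(-1)^{k}=\varepsilon$, we obtain
\[
(1+uv)E(Z'_{m,d})-E(Z_{m,d})=(1+uv)E(\PP^{k-1})-E(\PP^{k})-\varepsilon\bigl[(1+uv)P_{\mathrm{prim}}(Z'_{m,d})+P_{\mathrm{prim}}(Z_{m,d})\bigr].
\]
The projective-space part simplifies by telescoping: $(1+uv)E(\PP^{k-1})-E(\PP^{k})=E(\PP^{k-1})-1$. Since $k-1=d-m-2$, this is exactly $E(\PP^{d-m-2})-1$. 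Note that when $d=m+1$ this term equals $-1$, as required by the definition \eqref{eq:Lmd}.

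Substituting back into \eqref{Promaster}, the term $(1+uv)^{n-1}\bigl[E(\PP^d)E(\PP^{m+1})+(uv)^{m+1}(E(\PP^{d-m-2})-1)\bigr]$ is precisely $(1+uv)^{n-1}L_{m,d}$. The remaining terms, multiplied by $(uv)^{m+1}(1+uv)^{n-1}$, give the bracketed expression in \eqref{masterfor}, with the powers $(1+uv)^n$ and $(1+uv)^{n-1}$ as stated.

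The only delicate step is getting the Lefschetz decomposition exactly right, including the signs, with the definition \eqref{PRIMITIVHPOL} of $h_{\mathrm{prim}}$. In particular, we must check that $E(V)=\sum(-1)^{p+q}h^{p,q}u^pv^q$ places the middle-degree classes with sign $(-1)^r$. We must also verify that the edge cases $k=0$ and $l\in\{-1,0\}$ are compatible with the chosen conventions. Once this is settled, everything else is formal algebra.
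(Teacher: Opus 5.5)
Your proposal is correct and follows essentially the same route as the paper. Like the paper, you apply \eqref{eq:Epoly} to $Z_{m,d}$ and $Z'_{m,d}$ (with signs $\varepsilon$ and $-\varepsilon$), substitute into \eqref{Promaster}, and use the identity $(1+uv)E(\PP^{l};u,v)-E(\PP^{k};u,v)=E(\PP^{l};u,v)-1$ to recover $(1+uv)^{n-1}L_{m,d}(u,v)$; your handling of the degenerate cases $k=0$ and $l\in\{-1,0\}$ agrees with the paper's conventions.
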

\begin{proof}
        Applying   \eqref{eq:Epoly} for $V=Z_{m,d}^{\prime }$ and $V=Z_{m,d},$ respectively, we have%
        \begin{equation}
                E(Z_{m,d}^{\prime };u,v)=E(\PP^{l};u,v)-\varepsilon P_{\text{prim}%
                }(Z_{m,d}^{\prime };u,v)  \label{EPOZPRIME}
        \end{equation}%
        and%
        \begin{equation}
                E(Z_{m,d};u,v)=E(\PP^{k};u,v)+\varepsilon P_{\text{prim}}(Z_{m,d};u,v).
                \label{EPOZ}
        \end{equation}%
        Substituting \eqref{EPOZPRIME} and \eqref{EPOZ} into \eqref{Promaster} we get  
        \begin{equation}
                \begin{array}[t]{l}
                        \Est \bigl(Y_{m,d}\times (\PP^{1})^{n};u,v\bigr)  \label{Promaster2} \medskip \\ 
                        =(1+uv)^{n-1}\bigl[E(\PP^{d};u,v)E(\PP^{m+1};u,v)+(uv)^{m+1}((1+uv)E(\PP%
                        ^{l};u,v)-E(\PP^{k};u,v))\bigr] \medskip \\ 
                        -\varepsilon (1+uv)^{n-1}(uv)^{m+1}\bigl[(1+uv)P_{\text{prim}}(Z_{m,d}^{\prime
                        };u,v)+P_{\text{prim}}(Z_{m,d};u,v)\bigr].%
                \end{array}%
        \end{equation}
        Since $\left( 1+uv\right) E(\PP^{l};u,v)-E(\PP^{k};u,v)=E(\PP^{l};u,v)-1$
        with $l=k-1=d-m-2,$ the second line of  \eqref{Promaster2} becomes $(1+uv)^{n-1}L_{m,d}(u,v)$
        and the third line becomes%
        \begin{equation*}
                -\varepsilon \bigl[\left( 1+uv\right) ^{n}(uv)^{m+1}P_{\text{\rm{prim}}%
                }(Z_{m,d}^{\prime };u,v)+\left( 1+uv\right) ^{n-1}(uv)^{m+1}P_{\text{\rm{%
                                        prim}}}(Z_{m,d};u,v)\bigr],
        \end{equation*}%
        giving  \eqref{masterfor}. 
\end{proof}

\begin{theorem}[Structure theorem]\label{thm:structure}
Let \(d\ge3\), \(0\le m\le d-1\), and \(n\ge1\). For all \(p,q\),
\begin{equation}\label{eq:ABC}
        \fbox{$%
                \begin{array}{ccc}
                        & h_{\emph{st}}^{p,q}\bigl(Y_{m,d}\times (\PP^{1})^{n}\bigr)%
                        =A_{p,q}+B_{p,q}-C_{p,q}, & 
                \end{array}%
                $}
\end{equation}%
where the following three quantities\footnote{If $d=m+1,$ then $Z'_{m,d}=\varnothing$. Convention: $h^{a,b}(\varnothing):=0$ and  $h^{a,b}_{\text{prim,mid}}(\varnothing):=0$ for all $a,b$, and  $P_{\text{prim}%
	}(\varnothing;u,v):=0.$\smallskip}\textsuperscript{,}\footnote{Notation: $[u^pv^q]_{\text{coef}}(...)$ means the coefficient of $u^{p}v^{q}$ \textit{after} expanding the polynomial in parentheses into monomials. \textit{Middle primitive Hodge numbers} are defined by \eqref{MIDPRHODGE}.} :
\begin{align}
A_{p,q}&=[u^pv^q]_{\emph{coef}}((1+uv)^{n-1}L_{m,d}(u,v)), \label{eq:A} \\   \noalign{\vskip10pt}
B_{p,q}&=\sum_{j=0}^{n}\tbinom{n}{j}
 h^{p-m-1-j,q-m-1-j}_{\emph{prim,mid}}(Z'_{m,d}),\label{eq:B}\\
C_{p,q}&=\sum_{j=0}^{n-1}\tbinom{n-1}{j}
 h^{p-m-1-j,q-m-1-j}_{\emph{prim,mid}}(Z_{m,d}),\label{eq:C}
\end{align}
 are non-negative. Moreover, $A_{p,q}=0$ whenever \(p\neq q\).
\end{theorem}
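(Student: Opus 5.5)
The plan is to read off coefficients directly from the master formula \eqref{masterfor}, using the convention $\Est(Y;u,v)=\sum_{p,q}(-1)^{p+q}\hst^{p,q}(Y)u^pv^q$. The formula splits into three summands, and I will show that each one contributes to a single term of \eqref{eq:ABC}.

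\emph{The term $A_{p,q}$.} The summand $(1+uv)^{n-1}L_{m,d}(u,v)$ is a polynomial in the single variable $uv$. Hence it contributes only to monomials $u^pv^q$ with $p=q$. For such monomials $(-1)^{p+q}=1$, so its contribution to $\hst^{p,q}$ is exactly $A_{p,q}$ as defined in \eqref{eq:A}. In particular $A_{p,q}=0$ for $p\neq q$. Non-negativity of $A_{p,q}$ follows from Lemma~\ref{NONNEGLMD} together with the fact that $(1+uv)^{n-1}$ has non-negative (binomial) coefficients.

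\emph{The $Z_{m,d}$ summand.} Expand
\[
(1+uv)^{n-1}(uv)^{m+1}P_{\mathrm{prim}}(Z_{m,d};u,v)=\sum_{j=0}^{n-1}\tbinom{n-1}{j}\sum_{a+b=k}h^{a,b}_{\mathrm{prim}}(Z_{m,d})\,u^{a+m+1+j}v^{b+m+1+j}.
\]
The coefficient of $u^pv^q$ is $\sum_j\binom{n-1}{j}h^{p-m-1-j,\,q-m-1-j}_{\mathrm{prim}}(Z_{m,d})$. The middle primitive Hodge numbers \eqref{MIDPRHODGE} are supported on $a+b=k$, so this sum equals $C_{p,q}$. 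Every such monomial has $p+q\equiv k \pmod 2$, hence $(-1)^{p+q}=\varepsilon$. Since the summand enters \eqref{masterfor} with the sign $-\varepsilon$, its contribution to $\hst^{p,q}$ is $\varepsilon\cdot(-\varepsilon)C_{p,q}=-C_{p,q}$.

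\emph{The $Z'_{m,d}$ summand.} The same expansion, now with $(1+uv)^n$ and middle degree $l=k-1$, gives coefficient $B_{p,q}$. Here $p+q\equiv l\pmod 2$, so $(-1)^{p+q}=(-1)^{l}=-\varepsilon$. The step needing the most care is the sign attached to this summand. In \eqref{EPOZPRIME} the correction carries $-\varepsilon$ rather than $+\varepsilon$, precisely because $(-1)^l=-\varepsilon$. I will re-derive from \eqref{eq:Epoly} that the contribution to $\Est$ is $(-1)^l\,(1+uv)^n(uv)^{m+1}P_{\mathrm{prim}}(Z'_{m,d};u,v)$. Multiplying by $(-1)^{p+q}=(-1)^l$ then gives $+B_{p,q}$. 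I will be careful to track this sign directly from \eqref{eq:Epoly}, \eqref{EPOZPRIME} and \eqref{Promaster} rather than from the compressed bracket in \eqref{masterfor}.

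\emph{Non-negativity of $B_{p,q}$ and $C_{p,q}$, and degenerate cases.} Both are binomially weighted sums of middle primitive Hodge numbers of smooth complete intersections. These numbers are non-negative: by hard Lefschetz (equivalently, the Lefschetz hyperplane theorem), $h^{a,b}_{\mathrm{prim}}=h^{a,b}-\delta_{a,b}$ with $h^{a,a}\ge1$ in the middle degree. When $d=m+1$, the footnote conventions make $B_{p,q}=0$ and the $Z'_{m,d}$ term vanish. When $k=0$, $Z_{m,d}$ is a finite set of points, and the primitive part is $\#\text{points}-1\ge0$; this case is covered by the same bookkeeping. Summing the three contributions yields \eqref{eq:ABC}.
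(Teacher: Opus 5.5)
Your proposal is correct and follows the paper's proof essentially step for step. You read off the three summands of the master formula \eqref{masterfor}, and your parity bookkeeping is the same: $(-1)^{p+q}=\varepsilon$ on the $Z_{m,d}$ part and $(-1)^{p+q}=(-1)^{l}=-\varepsilon$ on the $Z'_{m,d}$ part, which the paper writes as $(-1)^{d+m+1}(-1)^{d-m}=-1$ and $(-1)^{d+m}(-1)^{d-m}=1$. One small quibble: non-negativity of $B_{p,q}$ and $C_{p,q}$ is immediate, since primitive Hodge numbers are dimensions of subspaces; also, hard Lefschetz and the Lefschetz hyperplane theorem are not ``equivalent'', and the identity $h^{a,b}_{\mathrm{prim}}=h^{a,b}-\delta_{a,b}$ you quote uses both.
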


\begin{proof} \textit{The first summand.} Both $L_{m,d}(u,v)$ and $(1+uv)^{n-1}L_{m,d}(u,v)$ are polynomials in $uv$
        only. Thus this part is supported on the diagonal $p=q,$ on which the sign
        in $\Est \bigl(Y_{m,d}\times (\PP^{1})^{n};u,v\bigr)$ is $\left( -1\right)
        ^{p+p}=1.$ Consequently, the coefficient of $u^{p}v^{q}$ in this part is
        exactly its contribution to $h_{\text{st}}^{p,p}\bigl(Y_{m,d}\times (\PP%
        ^{1})^{n}\bigr),$ namely $A_{p,p}.$ By Lemma \ref{NONNEGLMD} we see that $A_{p,q}\geq 0$
        in all the cases. \medskip \newline 
$\bullet $ \textit{Expansion of the} $Z_{m,d}^{\prime }$-\textit{primitive
        part}. First, we note that \[P_{\text{prim}}(Z_{m,d}^{\prime };u,v)=\sum \limits_{a,b\ge 0, a+b=l}h_{\text{prim}%
        }^{a,b}(Z_{m,d}^{\prime })u^{a}v^{b}\ \  \text{and} \ \ \left( 1+uv\right) ^{n}=\sum \limits_{j=0}^{n}\tbinom{n}{j}u^{j}v^{j}, \] i.e.
\begin{equation*}
 \left( 1+uv\right) ^{n}(uv)^{m+1}P_{\text{prim}%
        }(Z_{m,d}^{\prime };u,v)=\sum \limits_{j=0}^{n}\sum \limits_{a,b\ge 0,a+b=l}\tbinom{n}{j%
        }h_{\text{prim}}^{a,b}(Z_{m,d}^{\prime })u^{a+m+1+j}v^{b+m+1+j}.
\end{equation*}%
Let us now fix a target monomial $u^{p}v^{q}$. Since \[
(p,q)=(a+m+1+j,b+m+1+j)\Longrightarrow (a,b)=(p-m-1-j,q-m-1-j),\] once $p,q,j$
are fixed, there is only one possible bidegree $(a,b)$ which can contribute.
Hence, the contribution corresponding to that $j$ is $\tbinom{n}{j}h_{\text{%
                prim}}^{p-m-1-j,q-m-1-j}(Z_{m,d}^{\prime }).$ Therefore%
\begin{equation*}
        \lbrack u^{p}v^{q}]_{\text{coef}}\, \bigl(\left( 1+uv\right) ^{n}(uv)^{m+1}P_{%
                \text{prim}}(Z_{m,d}^{\prime };u,v)\bigr)=\sum_{j=0}^{n}\tbinom{n}{j}h_{%
                \text{prim,mid}}^{p-m-1-j,q-m-1-j}(Z_{m,d}^{\prime })=:B_{p,q}.
\end{equation*}%
In formula \eqref{masterfor} $\left( 1+uv\right) ^{n}(uv)^{m+1}P_{\text{prim}%
}(Z_{m,d}^{\prime };u,v)$ appears multiplied by the global sign $%
-\varepsilon .$ The actual coefficient of $u^{p}v^{q}$ in the $E_{\text{st}}$-polynomial \eqref{masterfor}
coming from $Z_{m,d}^{\prime }$ is $-\varepsilon B_{p,q}.$ Since 
\begin{equation*}
        h_{\text{st}}^{p,q}\bigl(Y_{m,d}\times (\PP^{1})^{n}\bigr)=\left( -1\right)
        ^{p+q}[u^{p}v^{q}]_{\text{coef}}\text{\thinspace }\Est \bigl(Y_{m,d}\times (%
        \PP^{1})^{n};u,v\bigr),
\end{equation*}%
the $Z_{m,d}^{\prime }$-primitive contribution to  the stringy Hodge number $h_{\text{st}}^{p,q}\bigl(%
Y_{m,d}\times (\PP^{1})^{n}\bigr)$ is $\left( -1\right) ^{p+q}\left(
-\varepsilon \right) B_{p,q}.$ For every contributing monomial, $\left( -1\right) ^{p+q}=\left(
-1\right) ^{d+m+2j}=\left( -1\right) ^{d+m}$ because 
\begin{equation*}
        p+q=a+b+2(m+1+j)=l+2(m+1+j)=d-m-2+2(m+1+j)=d+m+2j.
\end{equation*}%
Moreover, $\varepsilon =\left( -1\right) ^{k}=\left( -1\right)
^{d-m-1}\Rightarrow -\varepsilon =\left( -1\right) ^{d-m}.$ So \ the $%
Z_{m,d}^{\prime }$-primitive contribution to $h_{\text{st}}^{p,q}$ equals 
\begin{equation*}
        \left( -1\right) ^{p+q}\left( -\varepsilon \right) B_{p,q}=\left( -1\right)
        ^{d+m}\left( -1\right) ^{d-m}B_{p,q}=B_{p,q}\geq 0.
\end{equation*}%
$\bullet $ \textit{Expansion of the} $Z_{m,d}$-\textit{primitive part}. We
note that $   \left( 1+uv\right) ^{n-1}=\sum \limits_{j=0}^{n-1}\tbinom{n-1}{j}%
u^{j}v^{j}$, i.e.
\begin{equation*}
\left( 1+uv\right) ^{n-1}(uv)^{m+1}P_{\text{prim}%
        }(Z_{m,d};u,v)=\sum \limits_{j=0}^{n-1}\sum \limits_{a,b\ge 0, a+b=k}\tbinom{n-1}{j}h_{%
                \text{prim}}^{a,b}(Z_{m,d})u^{a+m+1+j}v^{b+m+1+j}.
\end{equation*}%
Therefore%
\begin{equation*}
        \lbrack u^{p}v^{q}]_{\text{coef}}\, \bigl(\left( 1+uv\right)
        ^{n-1}(uv)^{m+1}P_{\text{prim}}(Z_{m,d};u,v)\bigr)=\sum_{j=0}^{n-1}\tbinom{n-1%
        }{j}h_{\text{prim,mid}}^{p-m-1-j,q-m-1-j}(Z_{m,d})=:C_{p,q}\geq 0.
\end{equation*}%
The $Z_{m,d}$-primitive contribution to the stringy Hodge number $h_{\text{st}}^{p,q}\bigl(%
Y_{m,d}\times (\PP^{1})^{n}\bigr)$ is $\left( -1\right) ^{p+q}\left(
-\varepsilon \right) C_{p,q}.$ Obviously, $\left( -1\right) ^{p+q}=\left(
-1\right) ^{d+m+1+2j}=\left( -1\right) ^{d+m+1}$ because 
\begin{equation*}
        p+q=a+b+2(m+1+j)=k+2(m+1+j)=d-m-1+2(m+1+j)=d+m+1+2j.
\end{equation*}%
Moreover, $\varepsilon =\left( -1\right) ^{k}=\left( -1\right)
^{d-m-1}\Rightarrow -\varepsilon =\left( -1\right) ^{d-m}.$ So \ the $Z_{m,d}
$-primitive contribution to $h_{\text{st}}^{p,q}$ equals 
\begin{equation*}
        \left( -1\right) ^{p+q}\left( -\varepsilon \right) C_{p,q}=\left( -1\right)
        ^{d+m+1}\left( -1\right) ^{d-m}C_{p,q}=-C_{p,q}.
\end{equation*}%
Now  \eqref{eq:A}, \eqref{eq:B} and \eqref{eq:C} imply \eqref{eq:ABC} via \eqref{masterfor}.
\end{proof}

\begin{remark}\label{rem:culprit-correct} Equality \eqref{eq:ABC} shows that negativity can only arise from the third term: if
\(\hst^{p,q}<0\), then
$
C_{p,q}>A_{p,q}+B_{p,q}.
$ The support of $A_{p,q}$  is governed by the diagonal condition $p=q$, whereas parity distinguishes the two primitive contributions: $B_{p,q}\neq 0$ only if $ p+q \equiv d+m (\text{mod}\ 2)$, while $C_{p,q}$ can be non-zero only if $\ p+q \equiv d+m+1 (\text{mod}\ 2)$. Thus, off the diagonal,  $A_{p,q}=0,$  and the parity of $p+q$ determines whether the contribution comes from $B_{p,q}$ or from $-C_{p,q}$. More precisely, on the off-diagonal slice $p\neq q,  \ p+q \equiv d+m+1 (\text{mod}\ 2)$ we get exactly $\hst^{p,q}=-C_{p,q}$ (because $A_{p,q}=B_{p,q}=0$).  On the diagonal, however, $A_{p,p}$ may be positive.  If $m+d$ is odd, then $B_{p,p}=0$ and $\hst^{p,p}= A_{p,p}-C_{p,p}$,  so diagonal negativity occurs precisely when $C_{p,p}>A_{p,p}$.   If $m+d$ is even, then  $C_{p,p}=0$  and $\hst^{p,p}= A_{p,p}+B_{p,p}\ge 0$. (See Propositions \ref{cor:offdiag} and \ref{cor:diagonal-parity} below.)
\end{remark}

\section{Complete negativity analysis}\label{sec:negativity}
\noindent Let  us fix integers $m\ge 0$, \(d\ge3\) and \(n\ge1\) with \(m\le d-1\).
\begin{proposition}[Exact off-diagonal formula]\label{cor:offdiag}
The monomials in the polynomials contributing to $B_{p,q}$ and $C_{p,q}$ have opposite total-degree parity,  while the polynomial defining $A_{p,q}$ is supported on the diagonal. Hence, for \(p\ne q\),
\[
\hst^{p,q}\bigl(Y_{m,d}\times(\PP^1)^n\bigr)=
\begin{cases}
B_{p,q}\ge0,&\emph{if} \ p+q\equiv d+m\  (\emph{mod}\ 2), \medskip\\
-C_{p,q}\le0,&\emph{if} \ p+q\equiv d+m+1\ (\emph{mod}\ 2).
\end{cases}
\]
More explicitly, if \(a+b=k\), \(a\ne b\), and \(0\le j\le n-1\), then
\begin{equation}\label{eq:offdiag-exact}
\hst^{a+m+1+j,b+m+1+j}
\bigl(Y_{m,d}\times(\PP^1)^n\bigr)
=-\tbinom{n-1}{j}h^{a,b}_{\prim}(Z_{m,d}).
\end{equation}
\end{proposition}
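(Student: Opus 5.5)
The proof will be read off directly from the structure formula \eqref{eq:ABC} of Theorem~\ref{thm:structure}, combined with a parity bookkeeping of the three polynomials appearing in the master formula \eqref{masterfor}.

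\emph{Support of each term.} I first record where each of $A$, $B$, $C$ can be non-zero.
\begin{itemize}
\item The term $A_{p,q}$ is a coefficient of $(1+uv)^{n-1}L_{m,d}(u,v)$, which is a polynomial in $uv$ alone. Hence $A_{p,q}=0$ for $p\neq q$, as already stated in Theorem~\ref{thm:structure}.
\item Every monomial of $(1+uv)^n(uv)^{m+1}P_{\prim}(Z'_{m,d};u,v)$ has total degree $l+2(m+1+j)=d+m+2j$. Therefore $B_{p,q}\neq0$ forces $p+q\equiv d+m \pmod 2$.
\item Every monomial of $(1+uv)^{n-1}(uv)^{m+1}P_{\prim}(Z_{m,d};u,v)$ has total degree $k+2(m+1+j)=d+m+1+2j$. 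Therefore $C_{p,q}\neq0$ forces $p+q\equiv d+m+1\pmod 2$.
\end{itemize}
These are exactly the parity computations already carried out in the proof of Theorem~\ref{thm:structure}, so I will only cite them.

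\emph{The two cases for $p\neq q$.} With $A_{p,q}=0$, formula \eqref{eq:ABC} reduces to $\hst^{p,q}=B_{p,q}-C_{p,q}$, and at most one of the two terms survives by parity.
\begin{itemize}
\item If $p+q\equiv d+m \pmod 2$, then $C_{p,q}=0$ and $\hst^{p,q}=B_{p,q}\ge0$.
\item If $p+q\equiv d+m+1\pmod 2$, then $B_{p,q}=0$ and $\hst^{p,q}=-C_{p,q}\le0$.
\end{itemize}
The signs come from the non-negativity statements in Theorem~\ref{thm:structure}.

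\emph{The explicit formula \eqref{eq:offdiag-exact}.} Take $a+b=k$, $a\ne b$ and $0\le j\le n-1$, and set $(p,q)=(a+m+1+j,\,b+m+1+j)$. Then $p\neq q$ and $p+q=d+m+1+2j$, so we are in the second case and $\hst^{p,q}=-C_{p,q}$.

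It remains to evaluate $C_{p,q}$ from \eqref{eq:C}, namely
\[
C_{p,q}=\sum_{j'=0}^{n-1}\tbinom{n-1}{j'}\,h^{a+j-j',\,b+j-j'}_{\mathrm{prim,mid}}(Z_{m,d}).
\]
This is the one step needing care, though it is essentially formal. The middle primitive Hodge numbers $h^{\alpha,\beta}_{\mathrm{prim,mid}}$ vanish unless $\alpha+\beta=k$, by the definition \eqref{MIDPRHODGE}. The bidegree in the $j'$-th summand has total degree $a+b+2(j-j')=k+2(j-j')$. This equals $k$ only when $j'=j$, so the sum collapses to the single term $\binom{n-1}{j}h^{a,b}_{\prim}(Z_{m,d})$.

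In the paper's notation the primitive Hodge number $h^{a,b}_{\prim}(Z_{m,d})$ for $a+b=k$ is just the middle one, so this gives \eqref{eq:offdiag-exact}.
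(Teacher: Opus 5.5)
Your proposal is correct and follows essentially the same route as the paper. The paper likewise uses the parity bookkeeping of the $B$- and $C$-monomials (total degrees $d+m+2j$ versus $d+m+1+2j$) together with $A_{p,q}=0$ off the diagonal to obtain the two-case formula, and then collapses the sum defining $C_{p,q}$ to the single index $j'=j$ to get \eqref{eq:offdiag-exact}.
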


\begin{proof}
        The parity statements were already computed in the proof of Theorem \ref{thm:structure}.  A monomial in the polynomial piece defining $B_{p,q}$ has total degree
        $
        d+m+2j\equiv d+m\  (\text{mod}\ 2),
        $
        whereas a monomial in the polynomial piece defining $C_{p,q}$ has total degree
        $
        d+m+1+2j\equiv d+m+1\  (\text{mod}\ 2).
        $
        The two parities are opposite.  Consequently, at a fixed bidegree $(p,q)$,
        $B_{p,q}$ and $C_{p,q}$ can never be simultaneously non-zero. If $p\ne q$, then $A_{p,q}=0$. Therefore Theorem \ref{thm:structure} immediately gives the displayed two-case formula. It remains to prove the sharper equality \eqref{eq:offdiag-exact}.  We fix a
        primitive Hodge type $(a,b)$ of $Z_{m,d}$ with
        $
        a+b=k,
        \  a\ne b,
        $
        and choose $j\in\{0,\ldots,n-1\}$.  Put
        \[
        p':=a+m+1+j,
        \qquad
        q':=b+m+1+j.
        \]
        Since $a\ne b$, we also have $p'\ne q'$, so $A_{p',q'}=0$, and 
        $
        p'+q'=k+2(m+1+j)=d+m+1+2j,
        $
        which belongs to the parity slice $p'+q'\equiv d+m+1\ (\emph{mod}\ 2)$, so $B_{p',q'}=0$. Now we inspect $C_{p',q'}$:
\[
                C_{p',q'}=\sum_{r\ge0}\tbinom{n-1}{r} h^{p'-m-1-r,q'-m-1-r}_{\text{prim,mid}}(Z_{m,d})=\sum_{r\ge0}\tbinom{n-1}{r} h^{a+j-r,b+j-r}_{\text{prim,mid}}(Z_{m,d}).
\]
A middle primitive Hodge number of $Z_{m,d}$ can be non-zero only when the two indices sum to $k$.  Since
        \[
        (a+j-r)+(b+j-r)=k+2(j-r),
        \]
     this occurs if and only if $r=j$.  Thus every summand vanishes except the one with $r=j$, and
$
        C_{p',q'}=\tbinom{n-1}{j}h^{a,b}_{\prim}(Z_{m,d}).
$
        Since at $(p',q')$ we have $\hst^{p',q'}=-C_{p',q'}$, formula
\eqref{eq:offdiag-exact} follows.
\end{proof}

\begin{remark}
For \(n=1\) formula \eqref{eq:offdiag-exact} becomes especially simple. Since $j=0$ is the only possible shift, one gets $\hst^{p+m+1,q+m+1}
\bigl(Y_{m,d}\times \PP^1\bigr)
=-h^{p,q}_{\prim}(Z_{m,d})$ for the relevant off-diagonal primitive bidegrees $(p,q)$.
\end{remark}
\begin{proposition}[Parity criterion for diagonal negativity]\label{cor:diagonal-parity}
The polynomial $P_{\emph{prim}}(Z_{m,d};u,v)$ can have diagonal support only if
\(k=d-m-1\) is even, i.e. only if \(m+d\) is odd. Consequently:
\begin{enumerate}[label=(\roman*)]
\item if \(m+d\) is even, then
\begin{equation}
\hst^{p,p}\bigl(Y_{m,d}\times(\PP^1)^n\bigr)=A_{p,p}+B_{p,p}\ge0 \label{SYNTH1}
\end{equation}
for every \(p\); diagonal negativity is impossible;
\item if \(m+d\) is odd, then 
\begin{equation}
\hst^{p,p}\bigl(Y_{m,d}\times(\PP^1)^n\bigr)=A_{p,p}-C_{p,p}, \label{SYNTH2}
\end{equation}
and diagonal negativity is possible exactly when $C_{p,p}> A_{p,p}$.
\end{enumerate}
\end{proposition}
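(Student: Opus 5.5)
The plan is to prove the parity statement about \(P_{\text{prim}}(Z_{m,d};u,v)\) first, and then read off (i) and (ii) from the Structure Theorem~\ref{thm:structure}, using the parity bookkeeping already done in its proof and in Proposition~\ref{cor:offdiag}.

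For the first claim, recall that \(P_{\text{prim}}(Z_{m,d};u,v)=\sum_{a+b=k}h^{a,b}_{\text{prim}}(Z_{m,d})u^av^b\) is homogeneous of total degree \(k\). A diagonal monomial \(u^av^a\) has total degree \(2a\), so it can occur only if \(k=2a\) is even. Since \(k=d-m-1\), this happens exactly when \(m+d\) is odd. Multiplying by \((1+uv)^{n-1}(uv)^{m+1}\) shifts both exponents by the same amount \(m+1+j\), so diagonal monomials stay diagonal and off-diagonal ones stay off-diagonal. Hence \(C_{p,p}\neq0\) forces \(m+d\) odd. The same argument applied to \(P_{\text{prim}}(Z'_{m,d};u,v)\), which has degree \(l=k-1\), shows that \(B_{p,p}\neq 0\) forces \(l\) even, i.e. \(m+d\) even. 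When \(d=m+1\), \(Z'_{m,d}=\varnothing\) and \(B\equiv0\) by the footnote convention, so this case causes no trouble.

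For (i), suppose \(m+d\) is even. Then \(C_{p,p}=0\) for all \(p\), and \eqref{eq:ABC} gives \(\hst^{p,p}=A_{p,p}+B_{p,p}\). Both terms are non-negative by Theorem~\ref{thm:structure}, which yields \eqref{SYNTH1}.

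For (ii), suppose \(m+d\) is odd. Then \(l\) is odd, so \(B_{p,p}=0\), and \eqref{eq:ABC} reduces to \eqref{SYNTH2}. The equivalence \(\hst^{p,p}<0\iff C_{p,p}>A_{p,p}\) is then immediate.

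There is no real obstacle here. The one point needing care is that the vanishing of \(C_{p,p}\) (respectively \(B_{p,p}\)) must hold termwise for every shift \(j\). This follows from the explicit sums \eqref{eq:B} and \eqref{eq:C}: a summand \(h^{p-m-1-j,\,p-m-1-j}_{\text{prim,mid}}\) can be non-zero only if \(2(p-m-1-j)\) equals \(k\) (respectively \(l\)), and the parity assumption rules that out.
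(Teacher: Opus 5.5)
Your proposal is correct and follows essentially the same route as the paper. Homogeneity of $P_{\mathrm{prim}}(Z_{m,d};u,v)$ in total degree $k$ (resp.\ of $P_{\mathrm{prim}}(Z'_{m,d};u,v)$ in degree $l=k-1$) means diagonal support requires $k$ even (resp.\ $l$ even), so $C_{p,p}$ or $B_{p,p}$ vanishes according to the parity of $m+d$, and the structure theorem then gives (i) and (ii). Your explicit treatment of the empty case $d=m+1$ and your termwise check over the shifts $j$ are small refinements that the paper leaves implicit.
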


\begin{proof} Every monomial of $P_{\text{prim}}(Z_{m,d};u,v)$ has bidegree $(a,b)$ with
        $
        a+b=k.
        $
        Such a monomial is diagonal precisely when $a=b$.  If $a=b$, then
$
        2a=k,
$
        so $k$ must be even.  Conversely, when $k$ is even, the only possible diagonal middle bidegree is
$
        \left(\frac{k}{2},\frac{k}{2}\right).
$ Thus $P_{\text{prim}}(Z_{m,d};u,v)$ can meet the diagonal only if $k$ is even.
Now $ k=d-m-1.$  Since
$
        (d+m)-(d-m)=2m,
$
        \[
        k\text{ even}
        \iff d-m\text{ odd}
        \iff d+m\text{ odd}.
        \]
For $Z'_{m,d}$ the middle dimension is $l=k-1$.  Hence the parity condition is complementary: $P_{\text{prim}}(Z'_{m,d};u,v)$  can have diagonal support only if $l$ is even, i.e. only if $k$ is odd.
If $m+d$ is even, then $k$ is odd.  Therefore $C_{p,p}=0$ for all $p$, while $B_{p,p}$ may occur.  The structure theorem gives \eqref{SYNTH1}. If $m+d$ is odd, then $k$ is even and $l$ is odd.  Therefore
$B_{p,p}=0$ for all $p$, while $C_{p,p}$ may occur, leading to \eqref{SYNTH2}. This quantity is negative exactly when $C_{p,p}>A_{p,p}$.  That is the claimed parity
dichotomy.
\end{proof}

\begin{proposition}[The unique diagonal candidate for \(n=1\)]\label{thm:diagonal-candidate}
Assume \(m+d\) is odd, \(0\le m\le d-1\), and \(n=1\). Then there is at most one diagonal entry which can be negative, namely
\[
p=q=p_0:=\frac{m+d+1}{2}.
\]
\emph{(i)} If \(d\ge m+3\) and
\(\mathfrak{k}:=(d-m-1)/2=k/2\), then
\begin{equation}\label{eq:diag-formula-positive-dim}
\hst^{p_0,p_0}(Y_{m,d}\times\PP^1)
=(m+3)-h^{\mathfrak{k},\mathfrak{k}}_{\prim}(Z_{m,d}).
\end{equation}
\emph{(ii)} If \(d=m+1\), then \(Z_{m,d}\) consists of $(d-1)^d$ reduced points and
\begin{equation}\label{eq:diag-formula-zero-dim}
\hst^{m+1,m+1}(Y_{m,d}\times\PP^1)
=(d+1)-(d-1)^d.
\end{equation}
\end{proposition}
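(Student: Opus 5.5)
For $n=1$ the factor $(1+uv)^{n-1}$ equals $1$, so I would read everything off the structure theorem (Theorem~\ref{thm:structure}) together with the parity criterion (Proposition~\ref{cor:diagonal-parity}\,(ii)). Since $m+d$ is odd, \eqref{SYNTH2} gives $\hst^{p,p}=A_{p,p}-C_{p,p}$ with $A_{p,p}\ge 0$. Moreover, with $n=1$ the sum \eqref{eq:C} has the single term $j=0$, so $C_{p,p}=h^{p-m-1,p-m-1}_{\prim,\mathrm{mid}}(Z_{m,d})$. A middle primitive Hodge number of $Z_{m,d}$ can be non-zero only if its two indices sum to $k$. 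That forces $2(p-m-1)=k=d-m-1$, i.e.\ $p=p_0=(m+d+1)/2$. For every other $p$ we get $C_{p,p}=0$ and hence $\hst^{p,p}=A_{p,p}\ge 0$. This proves the uniqueness claim, and at $p_0$ we have $C_{p_0,p_0}=h^{\mathfrak k,\mathfrak k}_{\prim}(Z_{m,d})$ with $\mathfrak k=k/2$.

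It remains to compute $A_{p_0,p_0}$, the coefficient of $t^{p_0}$ in $L_{m,d}$ written in $t=uv$. I would split $L_{m,d}$ according to \eqref{eq:Lmd} into two pieces.

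\emph{The product piece.} The coefficient of $t^{p_0}$ in $E(\PP^d)E(\PP^{m+1})$ counts pairs $(a,b)$ with $a+b=p_0$, $0\le a\le d$ and $0\le b\le m+1$. Because $m\le d-1$, we have $m+1\le p_0\le d$. Hence every $b\in\{0,\dots,m+1\}$ gives an admissible $a=p_0-b\in[0,d]$, and this piece contributes $m+2$.

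\emph{The correction piece.} This is $t^{m+1}(t+\dots+t^{d-m-2})$ when $d\ge m+3$. Its $t^{p_0}$-coefficient is $1$ exactly when $1\le p_0-m-1=k/2\le k-1$. That inequality holds precisely when $k\ge 2$, which is the case $d\ge m+3$. So in case (i) we get $A_{p_0,p_0}=m+3$, and \eqref{eq:diag-formula-positive-dim} follows.

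\emph{Case (ii).} Here $d=m+1$, so $p_0=m+1$. The correction term is $-t^{m+1}$, as in the second case of Lemma~\ref{NONNEGLMD}, so $A_{p_0,p_0}=(m+2)-1=d$.

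The remaining step, which I expect to need the most care, is identifying $Z_{m,d}$ and its primitive cohomology when $d=m+1$. In that case $Z_{m,d}$ is cut out by $d$ general forms of degree $d-1$ in $\PP^d$. By Bertini it is a smooth zero-dimensional complete intersection, hence reduced. By Bézout it has $(d-1)^d$ points.

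With $k=0$ we have $\varepsilon=1$, so \eqref{EPOZ} reads $E(Z_{m,d})=1+P_{\prim}(Z_{m,d})$. Therefore $h^{0,0}_{\prim}(Z_{m,d})=(d-1)^d-1$. I must also check that the convention $h_{\prim,\mathrm{mid}}$ from \eqref{MIDPRHODGE} indeed assigns this value in dimension $0$.

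Putting these together gives $\hst^{m+1,m+1}=d-\bigl((d-1)^d-1\bigr)=(d+1)-(d-1)^d$, which is \eqref{eq:diag-formula-zero-dim}.
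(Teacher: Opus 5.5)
Your proposal is correct and follows the paper's proof essentially step by step. It obtains uniqueness from $C_{p,p}=h^{p-m-1,p-m-1}_{\prim,\mathrm{mid}}(Z_{m,d})$, which forces $p=p_0$, and then computes $A_{p_0,p_0}$ from the product piece ($m+2$) and the correction piece ($+1$ or $-1$) of $L_{m,d}$, using B\'ezout together with the zero-dimensional convention $h^{0,0}_{\prim}=(d-1)^d-1$ in case (ii). The only cosmetic differences are that you prove $m+1\le p_0\le d$ once for both cases and read $h^{0,0}_{\prim}$ off \eqref{EPOZ}, whereas the paper argues via $\mathfrak{k}\ge 1$ and cites the appendix convention directly.
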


\begin{proof}
        Because $m+d$ is odd, \eqref{SYNTH2} gives $\hst^{p,p}\bigl(Y_{m,d}\times\PP^1\bigr)=A_{p,p}-C_{p,p}. $ Since  $n=1$, formula \eqref{eq:C} simplifies to
        \[
        C_{p,p}=\sum_{j\ge0}\tbinom{0}{j}
        h^{p-m-1-j,p-m-1-j}_{\text{prim,mid}}(Z_{m,d})
        =h^{p-m-1,p-m-1}_{\text{prim,mid}}(Z_{m,d}).
        \]
        By definition, a middle primitive Hodge number of $Z_{m,d}$ can be non-zero only in total degree $k$.
        Hence a diagonal middle primitive term can occur only if
$
        2(p-m-1)=k=d-m-1,
$
i.e., only if $p=\frac{m+d+1}{2}=p_0.$ Thus $C_{p,p}=0$ for every $p\ne p_0$, and all those diagonal entries equal
        $A_{p,p}\ge0$.  This proves that $p_0$ is the unique \emph{candidate} for
        diagonal negativity.
\medskip \newline
        \noindent (i) If $d\ge m+3$, then 
$
        p_0=m+1+\mathfrak{k}
$
and     
$
        A_{p_0,p_0}=[(uv)^{p_0}]_{\text{coef}}\,(L_{m,d}(u,v)).
$
We shall compute the coefficient in the two summands of \eqref{eq:Lmd}.
We first consider $E(\PP^{d};u,v)E(\PP^{m+1};u,v)$.  The coefficient of $(uv)^{p_0}$ is the number of
        pairs $(a,b)$ such that
$a+b=p_0,\  0\le a\le d,\ 0\le b\le m+1.
$ For every $b=0,1,\ldots,m+1$, we put $a=p_0-b$.  We must check that this $a$ lies
        between $0$ and $d$.  Its minimum occurs for $b=m+1$ and equals
$
        p_0-(m+1)=\mathfrak{k} \ge1.
$ Its maximum occurs for $b=0$ and equals $p_0$.  Since $m+1\le d$, we have 
$
        p_0=\frac{m+d+1}{2}\le d.
$ Thus all $m+2$ choices of $b$ are valid, and
        \[
        [(uv)^{p_0}]_{\text{coef}}\,(E(\PP^{d};u,v)E(\PP^{m+1};u,v))=m+2.
        \]
        Next, we consider
        \[
        (uv)^{m+1}(E(\PP^{d-m-2};u,v)-1)
        =(uv)^{m+1}(uv+(uv)^2+\cdots+(uv)^{d-m-2}).
        \]
        If we consider a term $(uv)^r$ in the parentheses (where $r\in \{1,...,d-m-2\}$),  then multiplying by $(uv)^{m+1}$ gives $(uv)^{m+1+r}$. We are specifically looking for the coefficient of $(uv)^{p_0}$. Therefore the total exponent equals $p_0$, i.e., $m+1+r=p_0$ or, equivalently, $r=p_0-(m+1)$. But $p_0-(m+1)=\mathfrak{k}$. Hence, to obtain the term   $(uv)^{p_0}$, we must take $r=\mathfrak{k}$. It remains to verify that this term actually occurs. The available exponents are $1, 2, ...,d-m-2$. Thus we need $1\le\mathfrak{k}\le d-m-2$. Under the assumption $d \ge m+3$, this condition holds. Therefore there is exactly \textit{one} such term, with coefficient $1$, and hence the second summand of $L_{m,d}(u,v)$ contributes exactly $1$ to the coefficient of  $(uv)^{p_0}$. This means that 
        $
        A_{p_0,p_0}=m+2+1=m+3 $ and $C_{p_0,p_0}=h^{\mathfrak{k},\mathfrak{k}}_{\prim}(Z_{m,d}),$
leading to \eqref{eq:diag-formula-positive-dim}.
        
        \medskip
        \noindent (ii) If $d=m+1$, then $k=0$, so $Z_{m,d}$ is a zero-dimensional complete intersection in $\PP^d$ of
        $m+1=d$ hypersurfaces, each of which has degree $d-1$.  For a general choice the
        intersection is reduced and transverse, and B\'{e}zout's theorem gives
        $(d-1)^{m+1}=(d-1)^d$
        points. Now $p_0=m+1=d$.  Since 
        \[      L_{m,d}(u,v)=E(\PP^{d};u,v)E(\PP^{m+1};u,v)-(uv)^{m+1}=(E(\PP%
        ^{m+1};u,v))^{2}-(uv)^{m+1}.\]
        As in the second case of the proof of Lemma \ref{NONNEGLMD}, the coefficient of $(uv)^{m+1}$ in
        $(E(\PP%
        ^{m+1};u,v))^{2}$ is $m+2$, so by \eqref{eq:A} 
        $
        A_{m+1,m+1}=m+1.
        $
        For a set of $(d-1)^d$ points we have 
        \[
         P_{\text{prim}}(Z_{m,d};u,v)=(d-1)^d-1 \Longrightarrow         C_{m+1,m+1}=(d-1)^d-1 .
        \]
        Applying \eqref{SYNTH2} we get \eqref{eq:diag-formula-zero-dim}.
\end{proof}

\begin{remark}If $(m,d)=(0,3)$, then $k=2$, $\mathfrak{k}=1$, and $p_0=2$.  Here $Z_{0,3}$ is a
        smooth quadric surface in $\PP^3$, hence $Z_{0,3}\cong\PP^1\times\PP^1$ (cf. \cite[Ch. I, Ex. 2.15, p. 13]{Har77}) and
        $
        h^{1,1}(Z_{0,3})=2.
        $
        The ambient hyperplane class contributes one dimension, so
$h^{1,1}_{\mathrm{prim}}(Z_{0,3})=1.$
        Formula \eqref{eq:diag-formula-positive-dim} therefore gives
       $
        \hst^{2,2}(Y_{0,3}\times\PP^1)=3-1=2>0.
        $
        This shows that the word \textquotedblleft candidate\textquotedblright  \ in Proposition \ref{thm:diagonal-candidate} is essential: for $(m,d)=(0,3)$ the unique diagonal candidate is in fact positive.
\end{remark}
\medskip
\noindent \textbf{Proof of Theorem \ref{thm:classification}.}  In Sections \ref{sec:one-divisor} and \ref{sec:geometry}, we showed that the varieties
under consideration satisfy the hypotheses of Conjecture \ref{conj:bat}:
$Y_{m,d}$ is projective and Gorenstein, the blow-up along its singular linear
space gives a log resolution with discrepancy $1$, hence $Y_{m,d}$ is
terminal (in particular canonical), and after multiplying by $(\PP^1)^n$ the
stringy $E$-function is a polynomial for every $n\ge1$.  Thus, to prove that a
member of the family $Y_{m,d}\times(\PP^1) ^{n}$ is a counterexample for $d\ge 3$ and $0\le m\le d-1$, it is enough to exhibit at least one negative stringy Hodge
number. We distinguish cases according to the value of
\[
k=\dim (Z_{m,d})=d-m-1.
\]

\subsection*{\noindent $\bullet$ Case 1: $k\ge1$}
We prove that, except for $(m,d)=(0,3)$, the smooth complete intersection $Z_{m,d}$
has a non-zero \emph{off-diagonal} primitive Hodge number.  Formula \eqref{eq:offdiag-exact} of Proposition \ref{cor:offdiag} then
immediately produces a negative stringy Hodge number for every $n\ge1$.

\medskip
\noindent\textbf{Subcase 1A: $k=1$.}
Then
$
m=d-2,
$ $Z_{m,d} =Z_{d-2,d}$ is a smooth curve in $\PP^d$ and formula \eqref{hpqprimZMD} simplifies to 
\begin{equation*}
	h_{\text{prim}}^{1,0}(Z_{m,d})=h_{\text{prim}}^{0,1}(Z_{m,d})=%
	\sum_{j=1}^{d-1}(-1)^{d-1+j}\tbinom{d-1}{j}\tbinom{(d-1)j-1}{d}=1+\frac{%
		d(d-3)}{2}(d-1)^{d-1}>0.
\end{equation*}
\medskip
\noindent\textbf{Subcase 1B: $k\ge2$ and $m\ge1$.} Adjunction formula gives
\begin{equation*}
        K_{Z_{m,d}}\cong \cO_{Z_{m,d}}\left((m+1)(d-1)-(d+1)\right)=\cO_{Z_{m,d}}\left(m(d-1)-2\right).
\end{equation*}
Since $k=d-m-1\ge2$, we have $d\ge m+3$.  Hence, for $m\ge1$,
$
m(d-1)-2\ge m(m+2)-2\ge1.
$
We define $\mathfrak{d}:=m(d-1)-2\ge1$ and choose a hyperplane $\mathcal H$ in $\PP^d$ not containing $Z_{m,d}$.  The restriction $\left. \mathbb{L}\right \vert _{Z_{m,d}}$ of the (non-zero) defining linear form $\mathbb{L}$  of  $\mathcal H$ to $Z_{m,d}$ is not identically zero. $\mathbb{L}$ is a global section of $\mathcal{O}_{\PP^{d}}(1),$ so after restriction $%
0\neq \left. \mathbb{L}\right \vert _{Z_{m,d}}\in H^{0}(Z_{m,d},\mathcal{O}%
_{Z_{m,d}}(1)).$ Since $\mathfrak{d}\geq 1,$ its $\mathfrak{d}$-th power $%
(\left. \mathbb{L}\right \vert _{Z_{m,d}})^{\mathfrak{d}}$ is a section of $%
\mathcal{O}_{Z_{m,d}}(1)^{\otimes \mathfrak{d}}=\mathcal{O}_{Z_{m,d}}(%
\mathfrak{d}).$ Moreover, there is an open set on which $\left. \mathbb{L}%
\right \vert _{Z_{m,d}}$ does not vanish. Hence its $\mathfrak{d}$-th
power $(\left. \mathbb{L}\right \vert _{Z_{m,d}})^{\mathfrak{d}}$ cannot
vanish identically there, i.e.,   
\begin{equation*}
	\left. 
	\begin{array}{r}
		0\neq (\left. \mathbb{L}\right \vert _{Z_{m,d}})^{\mathfrak{d}}\in
		H^{0}(Z_{m,d},\mathcal{O}_{Z_{m,d}}(\mathfrak{d}))\medskip  \\ 
		K_{Z_{m,d}}\cong \mathcal{O}_{Z_{m,d}}(\mathfrak{d})%
	\end{array}%
	\right \} \Longrightarrow H^{0}(Z_{m,d},K_{Z_{m,d}})\neq 0\Longrightarrow
	h^{0}(Z_{m,d},K_{Z_{m,d}})>0,
\end{equation*}%
and therefore 
\begin{equation*}
h^{0}(Z_{m,d},K_{Z_{m,d}})=h^{0}(Z_{m,d},\Omega
	_{Z_{m,d}}^{k})=h^{k,0}(Z_{m,d})=h_{\text{prim}}^{k,0}(Z_{m,d})>0
\end{equation*}%
with $k\geq 2$. For all $j\in \{0,...,n-1\}$ \eqref{eq:offdiag-exact}  gives 
\begin{equation*}
	h_{\text{st}}^{d+j,m+1+j}(Y_{m,d}\times (\PP^{1})^{n})=-\tbinom{n-1}{j}h_{%
		\text{prim}}^{k,0}(Z_{m,d})<0.
\end{equation*}%

\medskip
\noindent\textbf{Subcase 1C: $m=0$ and $d\ge4$.} Now  $Z_{m,d} =Z_{0,d}\subset\PP^d$  is a smooth hypersurface of degree $d-1$ and dimension
$d-1$.  Formula \eqref{hpqprimZMD} gives 
\begin{equation*}
	h_{\text{prim}}^{p,k-p}(Z_{0,d})=h_{\text{prim}}^{p,d-1-p}(Z_{0,d})=%
	\sum_{i=0}^{p}(-1)^{i}\tbinom{d+1}{i}\tbinom{(d-1)(p+1)-(d-2)i-1}{d}.
\end{equation*}%
In particular, for $p=1$ we get 
$
	h_{\text{prim}}^{d-2,1}(Z_{0,d})=h_{\text{prim}}^{1,d-2}(Z_{0,d})=\tbinom{2d-3%
	}{d}>0.
$
This is off-diagonal, so equality  \eqref{eq:offdiag-exact}  of Proposition \ref{cor:offdiag} again gives negativity for every
$n\ge1$.\medskip

\noindent\textbf{Subcase 1D: $m=0$ and $d=3$.} This is the only positive-dimensional case not covered by Subcases 1A--1C.  Using \eqref{hpqprimZMD}, since $m+1=1,$ the $j$-sum contains only the term $j=1.$ Thus%
\begin{equation*}
	h_{\text{prim}}^{p,2-p}(Z_{0,3})=\sum_{i=0}^{p}(-1)^{i}\tbinom{4}{i}\tbinom{%
		2p+1-i}{3}\Longrightarrow h_{\text{prim}}^{0,2}(Z_{0,3})=0,\ h_{\text{prim}%
	}^{1,1}(Z_{0,3})=1,\ h_{\text{prim}}^{2,0}(Z_{0,3})=0
\end{equation*}%
(for $p=0,1,2$). This means that $P_{\text{prim}}(Z_{0,3};u,v)=uv$ and by \eqref{eq:Epoly},%
\begin{equation}
	E(Z_{0,3};u,v)=(1+uv+(uv)^{2})+P_{\text{prim}%
	}(Z_{0,3};u,v)=1+2uv+(uv)^{2}=(1+uv)^{2}.\label{EZ03}
\end{equation}%
On the other hand, formula \eqref{E-II} (in which $l=1$, the $r=0$ contribution is $-4u$ and  the $r=1$ contribution is $-4v$) gives
\begin{equation}
	E(Z_{0,3}^{\prime };u,v)=1-4u-4v+uv. \label{EPRZ03}
\end{equation}%
Applying \eqref{eq:EPROJEC} for $\nu =1,3$ and combining \eqref{Promaster} with \eqref{EZ03} and \eqref{EPRZ03} we obtain 
\begin{align}
	\Est \bigl(Y_{0,3}\times (\PP^{1})^{n};u,v\bigr)={}& (1+uv)^{n-1}\bigl[E(\PP%
	^{3};u,v)E(\PP^{1};u,v)   \nonumber \\
	& +(uv)((1+uv)E(Z_{0,3}^{\prime };u,v)-E(Z_{0,3};u,v))\bigr]  \notag \\
	& ={}\left( uv+1\right) ^{n}(1+uv-4uv^{2}-4u^{2}v+u^{2}v^{2}+u^{3}v^{3}). \label{Promaster3}
\end{align}%
$\allowbreak $Expanding $\left( uv+1\right) ^{n}=\sum_{j=0}^{n}\tbinom{n}{j}%
u^{j}v^{j},$ we conclude  via  \eqref{Promaster3}  that {\small
\begin{equation*}
	\Est \bigl(Y_{0,3}\times (\PP^{1})^{n};u,v\bigr)=\sum_{j=0}^{n}\tbinom{n}{j}%
	(u^{j}v^{j}+u^{j+1}v^{j+1}+u^{j+2}v^{j+2}+u^{j+3}v^{j+3})-4\sum_{j=0}^{n}%
	\tbinom{n}{j}(u^{j+2}v^{j+1}+u^{j+1}v^{j+2}).
\end{equation*}}
\noindent Therefore for every $j\in \{0,...,n\},$%
\begin{equation*}
	\lbrack u^{j+2}v^{j+1}]_{\text{coef}}\, \Est \bigl(Y_{0,3}\times (\PP%
	^{1})^{n};u,v\bigr)=[u^{j+1}v^{j+2}]_{\text{coef}}\, \Est \bigl(Y_{0,3}\times (%
	\PP^{1})^{n};u,v\bigr)=-4\tbinom{n}{j}.
\end{equation*}%
But these monomials have \textit{odd }total degree $(j+2)+(j+1)=2j+3$ and $%
E_{\text{st}}=\sum_{p,q}(-1)^{p+q}h_{\text{st}}^{p,q}u^{p}v^{q}.$ Hence 
\begin{equation*}
	h_{\text{st}}^{j+2,j+1}(Y_{0,3}\times (\PP^{1})^{n})=h_{\text{st}%
	}^{j+1,j+2}(Y_{0,3}\times (\PP^{1})^{n})=4\tbinom{n}{j}>0 \ \ (\text{for}\ 0\le j \le n)
\end{equation*}%
and along the diagonal 
\begin{equation*}
	h_{\text{st}}^{r,r}(Y_{0,3}\times (\PP^{1})^{n})=\tbinom{n}{r}+\tbinom{n}{r-1%
	}+\tbinom{n}{r-2}+\tbinom{n}{r-3}>0\text{ \  \ (whenever }0\leq r\leq n+3\text{)}
\end{equation*}%
i.e., $h_{\text{st}}^{p,q}(Y_{0,3}\times (\PP^{1})^{n})\geq 0$ for all $%
p,q$ and all $n\geq 1,$ proving exception (i). 

\subsection*{$\bullet$ Case 2: $k=0$}
Now
$
d=m+1, \ \text{i.e.}, m=d-1.
$
The variety $Z^{\prime}_{m,d}$ has expected dimension $-1$, hence $Z^{\prime}_{m,d}=\varnothing$.  The
complete intersection $Z_{m,d}$ is cut out in $\PP^d$ by $d$ general hypersurfaces of
degree $d-1$.  By B\'{e}zout and transversality it consists of
$ (d-1)^d$ reduced points.  Since $E(Z_{d-1,d};u,v)=(d-1)^{d}=$ $E(\mathbb{P}^{0};u,v)+P_{\text{prim}%
}(Z_{d-1,d};u,v),$ i.e.,%
\begin{equation*}
	P_{\text{prim}}(Z_{d-1,d};u,v)=h_{\text{prim}}^{0,0}(Z_{d-1,d})=(d-1)^{d}-1.
\end{equation*}%
This is exactly what \eqref{hpqprimZMD} gives: 
\begin{equation*}
	h_{\text{prim}}^{0,0}(Z_{d-1,d})=\sum_{j=1}^{d}(-1)^{d+j}\tbinom{d}{j}%
	\tbinom{(d-1)j-1}{d}=(d-1)^{d}-1.
\end{equation*}%
From \eqref{eq:Lmd}, $L_{d-1,d}(u,v)=(1+uv+\cdots +(uv)^{d})^{2}-(uv)^{d},$ and master
formula \eqref{masterfor} becomes%
\begin{equation}
	\Est \bigl(Y_{d-1,d}\times (\PP^{1})^{n};u,v\bigr)=(1+uv)^{n-1}[(1+uv+\cdots
	+(uv)^{d})^{2}-(d-1)^{d}(uv)^{d}]. \label{ESTD-1D}
\end{equation}%
This is a polynomial in $uv$ alone. Hence every coefficient is a stringy Hodge
number, because the sign $(-1)^{2p}$ is $+1.$ 

\medskip
\noindent\textbf{Subcase 2A: $d=3$ (thus $m=2$).}
Here $(d-1)^{d}=2^3=8$, $(E(\PP^{3};u,v))^2=(1+uv+(uv)^2+(uv)^3)^2$. The coefficient vector of  $(E(\PP^{3};u,v))^2$ is $(1,2,3,4,3,2,1).$ Subtracting $8(uv)^3$ gives, for $n=1$,  $(1,2,3,-4,3,2,1).$  For each increase of $n$ by $1$, formula \eqref{ESTD-1D} multiplies the polynomial by $1+uv$, which replaces a coefficient vector $(a_0,\ldots,a_r)$ by
$
(a_0,a_0+a_1,a_1+a_2,\ldots,a_{r-1}+a_r,a_r).
$
Thus the coefficient vectors for $n=1,2,3,4$ are respectively
\begin{align*}
        &(1,2,3,-4,3,2,1),\\
        &(1,3,5,-1,-1,5,3,1),\\
        &(1,4,8,4,-2,4,8,4,1),\\
        &(1,5,12,12,2,2,12,12,5,1).
\end{align*}
Therefore negative diagonal stringy Hodge numbers occur for $n=1,2,3$, while
for $n=4$ every coefficient is non-negative. For every $n\ge4$ we can factor
\[
	\Est \bigl(Y_{2,3}\times (\PP^{1})^{n};u,v\bigr)=(1+uv)^{n-4}\Est \bigl(Y_{2,3}\times (\PP^{1})^{4};u,v\bigr).
\]
Both factors have non-negative coefficients, so their product has
non-negative coefficients.  Hence no negative stringy Hodge number occurs for
$n\ge4$.  This proves exception (ii) and the asserted negativity for
$n=1,2,3$.

\medskip
\noindent\textbf{Subcase 2B: $d\ge4$.}
Evaluating \eqref{ESTD-1D}  at $u=v=1$ we get
\begin{equation}\label{eq:w1}
       	\Est \bigl(Y_{d-1,d}\times (\PP^{1})^{n};1,1\bigr)=2^{n-1}\left((d+1)^2-(d-1)^d\right).
\end{equation}
Since $d\ge4$ we have
$
(d-1)^{d-2}\ge(d-1)^2>d+1 \Longrightarrow (d-1)^d=(d-1)^{d-2}(d-1)^2>(d+1)^2.
$
Thus the right side of \eqref{eq:w1} is strictly negative. But the polynomial in \eqref{ESTD-1D} is supported entirely on the diagonal.  Write it as
\[
	\Est \bigl(Y_{d-1,d}\times (\PP^{1})^{n};u,v\bigr)= \sum_r a_r(uv)^r.
\]
Since $(-1)^{r+r}=1$, we have $a_r=\hst^{r,r}\bigl(Y_{d-1,d}\times (\PP^{1})^{n}\bigr)$.  If every $a_r$ were non-negative, then
\[
\Est \bigl(Y_{d-1,d}\times (\PP^{1})^{n};1,1\bigr)=\sum_r a_r\ge0,
\]
contradicting \eqref{eq:w1}.  Hence at least one coefficient $a_r$, equivalently
at least one diagonal stringy Hodge number, is negative for every $n\ge1$.
This proves the final assertion and completes the classification.\hfill $\Box$
\begin{corollary} For every $\mathfrak{d}\ge 5$ there is a $\mathfrak{d}$-dimensional projective variety with Gorenstein terminal singularities, a polynomial stringy $E$-function and a negative stringy Hodge number, namely  $Y_{1,3}\times (\PP^{1})^{\mathfrak{d}-4}$. Explicitly, 
	\begin{equation}
	\hst^{3+j,2+j}(Y_{1,3}\times (\PP^{1})^{\mathfrak{d}-4})=\hst^{2+j,3+j}(Y_{1,3}\times (\PP^{1})^{\mathfrak{d}-4})=-\tbinom{\mathfrak{d}-5}{j}, \qquad 0\le j\le \mathfrak{d}-5. \label{HSEXAMP}
	\end{equation}
\end{corollary}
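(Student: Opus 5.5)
The plan is to specialise everything to $(m,d)=(1,3)$ and $n=\mathfrak{d}-4\ge1$. First, the dimension is $m+d+n=1+3+(\mathfrak{d}-4)=\mathfrak{d}$. Theorem~\ref{thm:classification}, or equivalently Remark~\ref{rem:rational}(iii) together with Theorem~\ref{thm:uniform-geometry}(iii), already gives the required properties of $Y_{1,3}\times(\PP^1)^{\mathfrak{d}-4}$: it is projective, Gorenstein and terminal, and its stringy $E$-function is a polynomial. Moreover $(1,3)$ is neither $(0,3)$ nor $(2,3)$, so the variety is a counterexample. It remains to identify the negative entries explicitly.

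Here $k=d-m-1=1$, so we are in Subcase 1A of the classification proof. The curve $Z_{1,3}=V(g_0,g_1)\subset\PP^3$ is a smooth complete intersection of two quadrics, hence an elliptic curve. Its genus can be checked by adjunction, $K_{Z}\cong\cO_Z(2+2-4)=\cO_Z$, or by the Subcase 1A formula $1+\tfrac{d(d-3)}{2}(d-1)^{d-1}=1$ at $d=3$. In either case
\[
h^{1,0}_{\prim}(Z_{1,3})=h^{0,1}_{\prim}(Z_{1,3})=1.
\]
These are off-diagonal primitive types $(a,b)=(1,0)$ and $(0,1)$ with $a+b=k=1$.

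Next, apply \eqref{eq:offdiag-exact} of Proposition~\ref{cor:offdiag} with $m=1$, $n=\mathfrak{d}-4$ and $0\le j\le n-1=\mathfrak{d}-5$. The bidegree becomes $(a+m+1+j,\,b+m+1+j)=(3+j,2+j)$ for $(a,b)=(1,0)$, and $(2+j,3+j)$ for $(a,b)=(0,1)$. Each entry equals $-\binom{\mathfrak{d}-5}{j}h^{1,0}_{\prim}=-\binom{\mathfrak{d}-5}{j}$, which is \eqref{HSEXAMP}. These values are negative because $\binom{\mathfrak{d}-5}{j}\ge1$ in this range.

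The only step needing care is confirming $h^{1,0}_{\prim}(Z_{1,3})=1$. For a curve, all of $H^1$ is primitive, so $h^{1,0}_{\prim}=g=1$, and this is immediate from adjunction. Everything else is direct substitution.
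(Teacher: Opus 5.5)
Your proposal is correct and follows essentially the paper's route: identify $Z_{1,3}$ as a smooth $(2,2)$ elliptic curve with $h^{1,0}_{\prim}=h^{0,1}_{\prim}=1$, substitute into \eqref{eq:offdiag-exact} with $m=1$ and $n=\mathfrak{d}-4$, and take the Gorenstein, terminal and polynomial-$\Est$ properties from Theorem~\ref{thm:classification}. Your extra checks (adjunction giving $K_{Z_{1,3}}\cong\cO_{Z_{1,3}}$, and the fact that $H^1$ of a curve is entirely primitive) only make explicit what the paper takes for granted.
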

\noindent This recovers Huang$-$Satriano’s counterexamples in all dimensions $\ge 5$ within a single family. See \S \ref{HSMEMBER} below.

\section{Examples}\label{sec:special}

\subsection{The Huang--Satriano member \((m,d)=(1,3)\)}\label{HSMEMBER}
Here \(Z_{1,3}\subset\PP^3\) is a \((2,2)\) elliptic curve. Thus
$h^{1,0}_{\prim}(Z_{1,3})=h^{0,1}_{\prim}(Z_{1,3})=1$, and Proposition \ref{cor:offdiag} gives
\[
\hst^{3+j,2+j}(Y_{1,3}\times (\PP^{1})^{n})=\hst^{2+j,3+j}(Y_{1,3}\times (\PP^{1})^{n})=-\tbinom{n-1}{j}, \qquad 0\le j\le n-1.
\]
For \(n=1\), this is \(-1\). Setting $n=\mathfrak{d}-4$ we get \eqref{HSEXAMP}. Since \(m+d=4\) is even,  Proposition \ref{cor:diagonal-parity} explains structurally why no diagonal negativity occurs. In fact,%
\begin{equation*}
	h_{\text{st}}^{r,r}(Y_{1,3}\times (\PP^{1})^{n})=\sum_{i=0}^{5}\lambda _{i}%
	\tbinom{n-1}{r-i},\text{ \  \ with\  \ }\left( \lambda _{0},\lambda
	_{1},\lambda _{2},\lambda _{3},\lambda _{4},\lambda _{5}\right)
	=(1,2,14,14,2,1).
\end{equation*}%
\subsection{The member with parameters \((m,d)=(0,4)\)}
Here \(Z_{0,4}\subset\PP^4\) is a smooth cubic threefold, with $h^{2,1}_{\prim}(Z_{0,4})=h^{1,2}_{\prim}(Z_{0,4})=5$.  The shift is \((1,1)\), so
\begin{equation*}\label{eq:familyA-negative}
\hst^{3+j,2+j}(Y_{0,4}\times (\PP^{1})^{n})=\hst^{2+j,3+j}(Y_{0,4}\times (\PP^{1})^{n})=-5\tbinom{n-1}{j},
\qquad 0\le j\le n-1.
\end{equation*}
Again \(m+d=4\) is even, hence every diagonal stringy Hodge number is non-negative. On the other hand, the $Z'_{0,4}$-primitive terms have even total degree after the shift, and they contribute positively:
\begin{equation*}\label{eq:familyA-pos}
	\hst^{3+j,1+j}(Y_{0,4}\times (\PP^{1})^{n})=\hst^{1+j,3+j}(Y_{0,4}\times (\PP^{1})^{n})=15\tbinom{n}{j},
	\qquad 0\le j\le n.
\end{equation*}
\subsection{The member with parameters \((m,d)=(1,4)\)}
Now \(Z_{1,4}\subset\PP^4\) is a smooth \((3,3)\) surface with $h^{2,0}_{\prim}(Z_{1,4})=h^{0,2}_{\prim}(Z_{1,4})=5$, and  $h^{1,1}(Z_{1,4})=51,\ h^{1,1}_{\prim}(Z_{1,4})=50.$ The $Z'_{1,4}$-contribution has negative coefficients but odd total degree, so it gives positive stringy Hodge numbers:
\begin{equation*}\label{eq:familyB-pos}
	\hst^{3+j,2+j}(Y_{1,4}\times (\PP^{1})^{n})=\hst^{2+j,3+j}(Y_{1,4}\times (\PP^{1})^{n})=91\tbinom{n}{j},
	\qquad 0\le j\le n.
\end{equation*}
By contrast, the $Z_{1,4}$-primitive off-diagonal terms have even total degree and give
\begin{equation*}\label{eq:familyB-neg}
	\hst^{4+j,2+j}(Y_{1,4}\times (\PP^{1})^{n})=\hst^{2+j,4+j}(Y_{1,4}\times (\PP^{1})^{n})=-5\tbinom{n-1}{j},
	\qquad 0\le j\le n-1.
\end{equation*}
For $n = 1$ the negative stringy Hodge numbers are
\[
	\hst^{3,3}(Y_{1,4}\times \PP^{1})=-46 \ \ \text{and}\ \  	\hst^{4,2}(Y_{1,4}\times \PP^{1})=\hst^{2,4}(Y_{1,4}\times \PP^{1})=-5 .
\]
This is the first of the two members in \S \ref{COMPAR}  with $m+d$ odd for which diagonal and off-diagonal negativity coexist.

\subsection{The member with parameters \((m,d)=(0,5)\).} Here \(Z_{0,5}\subset\PP^5\) is a smooth quartic fourfold, with {\small
\begin{equation*}
	h_{\text{prim}}^{0,4}(Z_{0,5})=h_{\text{prim}}^{4,0}(Z_{0,5})=0,\  \ h_{\text{%
			prim}}^{1,3}(Z_{0,5})=h_{\text{prim}}^{3,1}(Z_{0,5})=21,\  \
	h^{2,2}(Z_{0,5})=142,\  \ h_{\text{prim}}^{2,2}(Z_{0,5})=141.
\end{equation*}}%
The $Z'_{0,5}$ terms have negative coefficients but odd total degree and therefore produce positive
stringy Hodge numbers:
\begin{equation*}
	\hst^{3+j,2+j}(Y_{0,5}\times (\PP^{1})^{n})=\hst^{2+j,3+j}(Y_{0,5}\times (\PP^{1})^{n})=1126\tbinom{n}{j},
	\qquad 0\le j\le n,
\end{equation*}
and 
\begin{equation*}
	\hst^{4+j,1+j}(Y_{0,5}\times (\PP^{1})^{n})=\hst^{1+j,4+j}(Y_{0,5}\times (\PP^{1})^{n})=56\tbinom{n}{j},
	\qquad 0\le j\le n.
\end{equation*}
The primitive $(3, 1)$ and $(1, 3)$ classes of $Z_{0,5}$ produce the negative off-diagonal stringy Hodge
numbers
\begin{equation*}
	\hst^{4+j,2+j}(Y_{0,5}\times (\PP^{1})^{n})=\hst^{2+j,4+j}(Y_{0,5}\times (\PP^{1})^{n})=-21\tbinom{n-1}{j},
	\qquad 0\le j\le n-1.
\end{equation*}
For $n = 1$, the only negative stringy Hodge numbers are
\begin{equation*}
	\hst^{3,3}(Y_{0,5}\times \PP^{1})=-138,\ \  	\hst^{4,2}(Y_{0,5}\times \PP^{1})=\hst^{2,4}(Y_{0,5}\times \PP^{1})=-21.
\end{equation*}
The equality $3 -141=-138 $ shows directly the diagonal mechanism: a small ambient contribution is overwhelmed by the
primitive middle Hodge number $141$ of the quartic fourfold.
\subsection{Comparison of the above members}\label{COMPAR}
The essential data may be summarised as follows:\vspace{0.3cm}
\renewcommand {\arraystretch}{1.4}
\begin{center}
	\small
	\begin{tabular}{@{}c c c p{5.0cm} p{5.3cm}@{}}
		\toprule
		$(m,d)$ & $k$ & parity of $m+d$ & Key primitive data of $Z_{m,d}$ & Negative $\hst^{p,q}$'s of $Y_{m,d}\times \PP^{1}$ \\
		\midrule
		$(1,3)$ & $1$ & even & $\hprim^{1,0}=\hprim^{0,1}=1$ & $\hst^{3,2}=\hst^{2,3}=-1$ \\
		$(0,4)$ & $3$ & even & $\hprim^{2,1}=\hprim^{1,2}=5$ & $\hst^{3,2}=\hst^{2,3}=-5$ \\
		$(1,4)$ & $2$ & odd & $\hprim^{2,0}=\hprim^{0,2}=5$, $\hprim^{1,1}=50$ & $\hst^{3,3}=-46$, $\hst^{4,2}=\hst^{2,4}=-5$ \\
		$(0,5)$ & $4$ & odd & $\hprim^{3,1}=\hprim^{1,3}=21$, $\hprim^{2,2}=141$ & $\hst^{3,3}=-138$, $\hst^{4,2}=\hst^{2,4}=-21$ \\
		\bottomrule
	\end{tabular}
\end{center}
\renewcommand {\arraystretch}{1}
\subsection{Stringy Euler numbers of $Y_{m,d}$ for $4\le m+d \le 8$ and $0\le m\le d-1$.} 
For the $16$ members of the family $Y_{m,d}$  listed below, the stringy Euler number \eqref{ESTRINGY} is recorded in the following table:\vspace{0.3cm}
\renewcommand {\arraystretch}{1.4}
\begin{center}
	\begin{tabular}{@{}llllll@{}}
		\toprule
		$\dim$ & \multicolumn{5}{l}{$(m,d)$ and the stringy Euler numbers $\estr(Y_{m,d})$} \\
		\midrule
		$4$ & $(0,4)$: $152$ & $(1,3)$: $18$ & & &\\
		$5$ & $(0,5)$: $-2448$ & $(1,4)$: $-204$ & $(2,3)$: $4$ & &\\
		$6$ & $(0,6)$: $47092$ & $(1,5)$: $4553$ & $(2,4)$: $172$ & &\\
		$7$ & $(0,7)$: $-1016482$ & $(1,6)$: $-105552$ & $(2,5)$: $-4756$ & $(3,4)$: $-28$ &\\
		$8$ & $(0,8)$: $24511888$ & $(1,7)$: $2687520$ & $(2,6)$: $142514$ & $(3,5)$: $2575$ &\\
		\bottomrule
	\end{tabular}
\end{center}
\renewcommand {\arraystretch}{1}\vspace{0.3cm}
For all listed members, $\indst=1$, so the stringy index does not separate them; $\estr$ does.
\section{\texorpdfstring{$K$-equivalence}{K-equivalence}, Hard Lefschetz and smoothability}\label{Comments}
 \noindent $\bullet$ \textbf{$E_{\text{st}}$ as invariant of $K$-equivalence.} The members of the constructed family $(Y_{m,d}\times (\mathbb{P}%
 ^{1})^{n})_{(m,d,n)\text{ }}$ (with $m\geq 0,d\geq 3$ and $n\geq 1$) are
 rational and hence the equidimensional ones are pairwise birationally
 equivalent. This does not conflict with their different stringy $E$%
 -functions. The key point is the hierarchy 
 \begin{equation*}
 	\text{isomorphic}\Longrightarrow K\text{-equivalent} \Longrightarrow  \text{%
 		birationally equivalent,}
 \end{equation*}%
 with $E_{\text{st}}$ constant on the middle relation. Let us repeat the
 definition of $K$-equivalence. Two normal $\mathbb{Q}$-Gorenstein varieties $%
 Y$ and $Y^{\prime }$ are said to be $K$-\textit{equivalent}, written $Y\sim
 _{K}Y^{\prime },$ if there is a smooth variety $W$ with proper birational
 morphisms $\vartheta :W\longrightarrow Y$ and $\vartheta ^{\prime
 }:W\longrightarrow Y^{\prime },$ such that $\vartheta ^{\ast
 }K_{Y}=\vartheta ^{\prime \ast }K_{Y^{\prime }}$ as $\mathbb{Q}$-divisors. A
 birational morphism $f:Y^{\prime }\longrightarrow Y$ with $f^{\ast
 }K_{Y}=K_{Y^{\prime }}$ is \textit{crepant. }If  $f$ is crepant, then $Y\sim
 _{K}Y^{\prime }.$ 
 
 \begin{theorem}[cf.~Batyrev \cite{Bat99a} and Denef--Loeser \cite{DL99}]
 	\label{BATKEQUIV}If $Y,Y^{\prime }$ are normal $\mathbb{Q}$-Gorenstein varieties with
 at worst log-terminal singularities, then    
 \begin{equation*}
 	Y\sim _{K}Y^{\prime }\Longrightarrow E_{\emph{st}}(Y;u,v)=E_{\emph{st}%
 	}(Y^{\prime };u,v).
 \end{equation*}
 \end{theorem}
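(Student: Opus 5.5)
Since $K$-equivalence is witnessed by a single smooth variety $W$, the plan is to compute both $\Est(Y)$ and $\Est(Y')$ on one common log resolution and observe that the two formulas \eqref{eq:batdef} coincide term by term. The only deep input is the independence of \eqref{eq:batdef} from the chosen log resolution. Batyrev states this in \cite[Def.~3.1]{Bat98} and proves it by motivic (or $p$-adic) integration; I will take it as given.

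First I pass to a common \emph{log} resolution. Let $\vartheta\colon W\to Y$ and $\vartheta'\colon W\to Y'$ be as in the definition, with $\vartheta^*K_Y=\vartheta'^*K_{Y'}$. By Hironaka I blow up $W$ further along smooth centres, obtaining $\mu\colon \widetilde W\to W$ such that the union of the exceptional loci of $\vartheta\circ\mu$ and $\vartheta'\circ\mu$ is a simple normal crossings divisor $\bigcup_{i\in I}D_i$. Pulling back the $\mathbb Q$-Cartier equality along $\mu$ preserves it, so I may assume $W$ itself has this property. Then I write
\[
K_W=\vartheta^*K_Y+\sum_{i\in I}a_iD_i,\qquad K_W=\vartheta'^*K_{Y'}+\sum_{i\in I}a'_iD_i .
\]
Subtracting and using $\vartheta^*K_Y=\vartheta'^*K_{Y'}$ gives $a_i=a'_i$ for every $i\in I$.

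Next I have to deal with components $D_i$ that are exceptional for one map but not the other. If $D_i$ is not $\vartheta$-exceptional, then $a_i=0$. So I will use the version of \eqref{eq:batdef} in which the sum runs over all components of an snc divisor containing $\Exc(\vartheta)$. Each component with $a_j=0$ carries the factor $\frac{uv-1}{uv-1}=1$. Additivity of $E$ over the strata $D_J^\circ$ shows that adding such components does not change the sum: for fixed $J'\not\ni j$, the strata $D_{J'}^\circ$ and $D_{J'\cup\{j\}}^\circ$ recombine into the old stratum. I plan to verify this merging step carefully by induction on the number of added components.

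With this in hand, $\Est(Y;u,v)$ and $\Est(Y';u,v)$ are both given by
\[
\sum_{J\subseteq I}E(D_J^\circ;u,v)\prod_{j\in J}\frac{uv-1}{(uv)^{a_j+1}-1},
\]
with the same strata $D_J^\circ\subseteq W$ and the same discrepancies $a_j=a'_j$. Hence the two stringy $E$-functions are equal.

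The main obstacle is the resolution-independence of \eqref{eq:batdef} itself. If that cannot simply be quoted, I would prove it as follows. Express $\Est(Y)$ as the motivic integral $\int_{J_\infty W}\mathbb L^{-\ord K_{W/Y}}\,d\mu$, specialised through $E$; this is exactly the stratified sum above, by the snc computation of the integral. Then apply Kontsevich's change of variables formula for a proper birational morphism between two resolutions. That formula absorbs the relative Jacobian into $K_{W/Y}$, which makes the integral independent of $W$. Since $K_{W/Y}=K_{W/Y'}$ under $K$-equivalence, this also gives the conclusion of Theorem \ref{BATKEQUIV} directly.
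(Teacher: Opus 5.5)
Your argument is correct and is essentially the standard one. The paper gives no proof of its own; it quotes the result from Batyrev and Denef--Loeser, where it follows as you describe: on a common log resolution one has $K_{W/Y}=K_{W/Y'}$, and resolution-independence of $\Est$ (i.e.\ the change-of-variables formula for $\int_{J_\infty W}\mathbb{L}^{-\ord K_{W/Y}}\,d\mu$) does the rest, with your merging identity for discrepancy-zero components being the stratified counterpart of the fact that such components contribute trivially to that integral. Keep in mind that ``$\vartheta^*K_Y=\vartheta'^*K_{Y'}$ as $\mathbb{Q}$-divisors'' must be read with the compatible choices $K_Y=\vartheta_*K_W$, $K_{Y'}=\vartheta'_*K_W$, so that it literally says $K_{W/Y}=K_{W/Y'}$, which is what your subtraction uses; also, for non-Gorenstein $Y$ the motivic version requires adjoining fractional powers $\mathbb{L}^{1/r}$.
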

 
\begin{example}[Strictness of the first implication: $K$-equivalent, not isomorphic]
	Let $Y$ be a projective threefold with one ordinary double point admitting two small resolutions
	$\Ytil_1\to Y\leftarrow\Ytil_2$ related by the Atiyah flop. Both are crepant, hence
	\[\Ytil_1\sim_K\Ytil_2\sim_KY \Longrightarrow \E(\Ytil_1;u,v)=E(\Ytil_2;u,v)=\Est(Y;u,v),\]
although $\Ytil_1\not\cong\Ytil_2$ in general. This is the
	mechanism behind the whole crepant-resolution philosophy.
\end{example}

\begin{example}[Strictness of the second implication: birational, not $K$-equivalent]\label{ex:P2}
	$\PP^2$ and $\PP^1\times\PP^1$ are smooth, rational and birationally equivalent, yet
	\[
E(\PP^2;u,v)=\Est(\PP^2;u,v)=1+uv+(uv)^2\ \neq\ 1+2uv+(uv)^2=\Est(\PP^1\times\PP^1;u,v)=E(\PP^1\times\PP^1;u,v).
	\]
So $\Est$ already fails to be birationally invariant on
	\emph{smooth} varieties, before any singularity is in play.
\end{example}

\begin{remark}[Why the Calabi--Yau intuition misleads] One has $\Est(Y;u,v)=E(Y;u,v)$ for smooth $Y$'s, so $\Est$ is a birational invariant only in situations
	where birational equivalence forces $K$-equivalence. This occurs when $K_Y\sim0$; Theorem \ref{BATKEQUIV} then specialises to Batyrev's theorem that birational Calabi--Yau manifolds have
	equal Betti numbers, cf.\ \cite{Bat99b}. More generally, it reflects the fact that  minimal models of a fixed klt pair are crepant birational; in the case
	of the pair $(Y, 0)$ this gives the corresponding $K$-equivalence statement, under the usual
	$\mathbb Q$-factoriality and MMP hypotheses. (See, for example, the discussion surrounding \cite[Thm.~3.52]{KM98}). The family $Y_{m,d}$ lies at
	the opposite extreme.  Since $Y_{m,d}$
	is rational, every smooth projective model in its birational class is
	uniruled. Hence its canonical divisor is not pseudo-effective
	(cf.~\cite[Thm. 0.2 and Cor. 0.3]{BDPP}), and the $K$-MMP terminates with a Mori fibre space
	rather than a minimal model (cf.~\cite[Cor.~1.3.3]{BCHM}). Thus there
	is no minimal-model mechanism forcing the members of this birational
	class to be $K$-equivalent. That a single birational class --- here the class of $\PP^{m+d}$ --- contains many
	$K$-classes with markedly different stringy invariants is therefore consistent with the standard MMP picture.
\end{remark}

 \noindent $\bullet$ \textbf{The unifying role of the family.} 
The significance of the family $(Y_{m,d}\times (\mathbb{P}%
^{1})^{n})_{(m,d,n)\text{ }}$ goes beyond providing a common notation for several examples. Its construction is motivated by the counterexamples of Huang and Satriano \cite{HS26} (with varying values of $n$), which serve as a prototype for the geometric mechanism developed here: a singular Fano variety with a simple linear singular locus, whose blow-up produces an exceptional divisor carrying the primitive cohomology responsible for the failure of non-negativity. Varying the parameters \(m\) and \(d\) naturally extends this prototype, allowing both the dimension of the singular locus and the geometry of the exceptional divisor to vary while preserving the same basic resolution pattern.  The geometry of the singular locus and its resolution, the stringy \(E\)-function, and the mechanism producing negative stringy Hodge numbers are then governed uniformly by the parameters $(m,d,n)$. Previously known counterexamples, genuinely new members, exceptional transition cases, and infinite regimes of persistent negativity consequently appear as different regions of a single parameter lattice.\medskip

 \noindent $\bullet$ \textbf{What separates the members?}  Besides their dimension and transverse singularity type, the members of the family can be distinguished by three further kinds of geometric data: their \(K\)-equivalence classes, their polarised Fano invariants, and the Hodge–Deligne data of the distinguished log resolutions constructed in Theorem \ref{thm:uniform-geometry}. 
 \smallskip\newline
 \noindent (i) \textit{Stringy $E$-functions as obstructions to $K$-equivalence.} By Theorem \ref{BATKEQUIV}, any two members of the family having distinct stringy $E$-functions are not $K$-equivalent. Thus the variation of $E_{\text{st}}$ separates many $K$-equivalence classes among birational members of the family. (The stringy $E$-function is invariant under
 $K$-equivalence, but not under ordinary birational equivalence;
 see~ \cite{Bat99a} and \cite{DL99}.)  For the subfamily $(Y_{m,d})_{(m,d)}$ of fixed dimension $m+d$, pairwise non-$K$-equivalence can also be seen from the anticanonical degrees (volumes) \eqref{antican1}. If we fix the dimension $\tilde{d}=m+d$ and define $\mathfrak{f}(m):=(-K_{Y_{m,d}})^{m+d}= d(m+2)^{m+d}$ for the consecutive admissible values $m=0,1,\dots,\big\lfloor\frac{\tilde{d}-1}{2}\big\rfloor$,  we see that
 \[\frac{\mathfrak{f}(m+1)}{\mathfrak{f}(m)}
 =\frac{\tilde{d}-m-1}{\tilde{d}-m}\big(\frac{m+3}{m+2}\big)^{\tilde{d}}
 \ge\frac{d-1}{d}\big(1+\frac{1}{m+2}\big)^{m+3}>\frac23\cdot2>1,\] using $\tilde{d}\ge m+3$,
 $(1+\frac1\xi)^{\xi+1}>\text{e}>2$ for all $\xi\in\mathbb R_{>0}$, and $d\ge3$;  hence $\mathfrak{f}$ is strictly increasing, and in
 particular injective, so the members of fixed dimension $\tilde{d}$ have pairwise distinct
 anticanonical degrees.  On the other hand, the anticanonical degree is a $K$-equivalence
 invariant: if $Y_{m,d}\sim_K Y_{m',d'}$ via $\vartheta\colon W\to Y_{m,d}$ and $\vartheta'\colon W\to Y_{m',d'}$ with
 $\vartheta^*K_{Y_{m,d}}=\vartheta'^*K_{Y_{m',d'}}$, then \[(-K_{Y_{m,d}})^{\tilde{d}}=(-\vartheta^*K_{Y_{m,d}})^{\tilde{d}}
 =(-\vartheta'^*K_{Y_{m',d'}})^{\tilde{d}}=(-K_{Y_{m',d'}})^{\tilde{d}}\] by the projection formula, with both morphisms being
 birational.  Distinct volumes therefore force non-$K$-equivalence.
 \smallskip \newline
\noindent (ii)  \textit{The polarised Fano class.} For $Y_{m,d}$ we have
\begin{equation*}
-K_{Y_{m,d}}=(m+2)\mathfrak{H},\ \ \text{deg}(Y_{m,d})=d=\mathfrak{H}^{m+d},\ \text{ }(-K_{Y_{m,d}})^{m+d}=d(m+2)^{m+d},
\end{equation*}%
and the numerical quantity corresponding to coindex is $d-1$. These are standard coarse invariants used to place Fano varieties in classification tables. In particular, the members with \(d=3\) numerically occupy Fujita's coindex-\(2\) (del Pezzo) range; a smooth del Pezzo \(n\)-fold of degree \(3\) is a cubic hypersurface in \(\mathbb P^{n+1}\) \cite[Ch.~I, Thm.~8.11(3), p. 72]{Fuj90}. The members with \(d=4\)  numerically occupy coindex-\(3\) (Mukai) range: they occur in the classification tables for genus \(g=3\). In the very ample (first-species) case the corresponding smooth varieties are quartic hypersurfaces in \(\mathbb P^{n+1}\); the genus-\(3\) classification also contains the double-cover case over a smooth quadric \cite[Prop.~1, p.~3001]{Muk89}. The family $(Y_{m,d})_{(m,d)}$ thus provides singular representatives inside classical Fano ranges, one for each $m$.
\smallskip \newline
\noindent  (iii)  \textit{The Hodge-Deligne data of the resolution.} The discrepancy of the log resolution of Theorem \ref{thm:uniform-geometry}, $f\colon \widetilde Y_{m,d}\longrightarrow Y_{m,d}$ is uniformly one. The same holds after taking products \[f\times\mathrm{id}:\widetilde Y_{m,d}\times(\PP^1)^n\longrightarrow
Y_{m,d}\times(\PP^1)^n.\] What changes dramatically is the geometry of  $D_{m,d}$.  The exceptional geometry enters directly into the one-divisor formula and, together with the Hodge-Deligne polynomial of the resolution, determines the stringy $E$-function. In the present family these data vary substantially even though all members are rational. \medskip

\noindent $\bullet$ \textbf{Stringy Hard Lefschetz?}  It is natural to ask if there is an analogue of  \eqref{HLIREL} at the level of stringy Hodge numbers.
\begin{question}[Hard Lefschetz property for stringy Hodge numbers]\label{QHARD} If $V$ is a
	projective complex variety with Gorenstein canonical singularities and
	polynomial $E_{\emph{st}}$-function,  are the inequalities 
	\begin{equation*}
		\fbox{$%
			\begin{array}{ccc}
				\  & h_{\emph{st}}^{p,q}(V)\leq h_{\emph{st}}^{p+1,q+1}(V) & \ 
			\end{array}%
			$}\text{ }
	\end{equation*}%
valid for its stringy Hodge numbers for all integers $p\geq 0$ and $q\geq 0$ with $%
	p+q\leq $ \emph{dim}$(V)-2?$
\end{question}
\noindent An affirmative answer to the above question was given by Schepers \cite[\S 3]{S09} in the
case where either dim$(V)=3,$ or dim$(V)\geq 4,$ $V$ has at most \textit{%
	isolated }singularities, and $V$ admits a log desingularisation whose
exceptional components all have discrepancy coefficients greater than  $
\left \lfloor \tfrac{1}{2}(\text{dim}(V)-4)\right \rfloor .$  On the other hand,
Schepers  \cite[Ex. 4.3]{S09} constructed a $6$-dimensional projective variety \(Y\) with a single isolated canonical hypersurface singularity whose stringy Hodge numbers \textit{do not have the Hard Lefschetz property}. This counterexample is derived from a partial desingularisation of the hypersurface 
\begin{equation*}
	\left \{ \lbrack x_{0}:x_{1}:\cdots :x_{7}]\in \PP^{7}\left \vert
	x_{1}^{5}x_{0}^{3}+x_{2}^{5}x_{0}^{3}+\sum_{j=3}^{7}x_{j}^{8}=0\right.
	\right \} 
\end{equation*} leaving it with only one \textit{isolated} singularity. The resolution of this singular point is carried out by a sequence of blow-ups. The resulting exceptional divisor has normal crossings with discrepancy coefficients $0$ or $1$ (whereas $\tfrac{6-4}{2}=1$). Another negative answer to Question \ref{QHARD} was given by Musta\c{t}\v{a} and Payne \cite[Ex. 1.1]{MuP05} by constructing a $6$-dimensional projective toric variety with $h^{3,3}_{\text{st}}<h^{2,2}_{\text{st}}.$ As a by-product of our computation mechanism (cf. proof of Theorem \ref{thm:classification}, subcase 2A) Theorem \ref{SPMEMBER} shows that there are \textit{seventeen} more counterexamples $Y_{2,3}\times (\PP^{1})^{n},\ 1\le n \le 17,$ \textit{of a different nature}.  These are non-toric, have non-isolated singularities, and are terminal. For the \textit{fourteen} values  $4\le n\le 17$,  the stringy Hodge numbers are moreover non-negative. Thus failure of non-negativity and failure of the Hard Lefschetz property are independent phenomena, and a single one-parameter subfamily separates them: along
$n\mapsto Y_{2,3}\times(\PP^1)^n$, non-negativity of the stringy Hodge numbers is restored
at $n=4$, whereas Hard Lefschetz property is restored only at $n=18$.
\begin{theorem}[The subfamily $Y_{2,3}\times (\PP%
	^{1})^{n}$ and the Hard Lefschetz threshold]\label{SPMEMBER}Since 
\begin{equation}
	E_{\emph{st}}(Y_{2,3}\times (\PP%
	^{1})^{n};u,v)=(1+uv)^{n+1}(1+(uv)^{2})^{2}-8(uv)^{3}(1+uv)^{n-1} \label{Y23(1)}
\end{equation}%
we have $h_{\emph{st}}^{p,q}(Y_{2,3}\times (\PP^{1})^{n})=0$ for $p\neq q$ and\emph{\footnote{As usual, for integers $a,b$ with $a\geq 0,$ we define $\binom{a}{b}:=0$ for $b<0$ or $b>a.$}}
\begin{equation}
	h_{\emph{st}}^{r,r}(Y_{2,3}\times (\PP^{1})^{n})=\binom{n+1}{r}+2\binom{n+1}{%
		r-2}+\binom{n+1}{r-4}-8\binom{n-1}{r-3}.  \label{Y23(2)}
\end{equation}
\emph{(i)} If $n\geq 4,$ then for any integer $r,$ 
\begin{equation}
	h_{\emph{st}}^{r,r}(Y_{2,3}\times (\PP^{1})^{n})\geq 0.  \label{Y23(3)}
\end{equation}
\emph{(ii)} If $n\geq 18,$ then 
\begin{equation}
	h_{\emph{st}}^{0,0}(Y_{2,3}\times (\PP^{1})^{n})<h_{\emph{st}%
	}^{1,1}(Y_{2,3}\times (\PP^{1})^{n})<\cdots <h_{\emph{st}}^{r,r}(Y_{2,3}\times (\PP^{1})^{n})\  \  \
	\left( 0\leq r\leq \left \lfloor \frac{n+5}{2}\right \rfloor \right) .\label{2318}
\end{equation}
Moreover, the lower bound $18$ is sharp: for every integer $n$ with $1\leq
n\leq 17,$ the monotonicity in \emph{\eqref{2318}} fails and the Hard Lefschetz property is not valid for the stringy Hodge numbers.
\end{theorem}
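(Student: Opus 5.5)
The plan is to start from \eqref{ESTD-1D} with $d=3$. Since $E(\PP^3;u,v)=(1+uv)(1+(uv)^2)$, we have
\[
\Est\bigl(Y_{2,3}\times(\PP^1)^n;u,v\bigr)=(1+uv)^{n-1}\bigl[(1+uv)^2(1+(uv)^2)^2-8(uv)^3\bigr],
\]
which is exactly \eqref{Y23(1)}. This is a polynomial in $uv$ alone, so $\hst^{p,q}=0$ for $p\ne q$ and $\hst^{r,r}$ is the coefficient of $(uv)^r$. Expanding $(1+(uv)^2)^2=1+2(uv)^2+(uv)^4$ and using the binomial theorem for $(1+uv)^{n+1}$ and $(1+uv)^{n-1}$ gives \eqref{Y23(2)} directly.

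For (i), I will not expand anything further. Subcase 2A already computed the coefficient vector $(1,5,12,12,2,2,12,12,5,1)$ for $n=4$, and it is non-negative. For $n\ge4$ the $E_{\mathrm{st}}$-polynomial equals $(1+uv)^{n-4}$ times the $n=4$ polynomial, so all coefficients stay non-negative and \eqref{Y23(3)} follows.

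For (ii), the key step is a clean formula for the consecutive difference $\Delta_r:=\hst^{r,r}-\hst^{r-1,r-1}$. Writing $Q_n(t)=(1+t)^{n+1}(1+t^2)^2-8t^3(1+t)^{n-1}$, the sequence $(\Delta_r)$ is the coefficient sequence of $(1-t)Q_n(t)$. Using $(1-t)(1+t)=1-t^2$, this becomes
\[
(1-t)Q_n(t)=(1+t)^{n-1}\bigl[(1-t^2)(1+t)(1+t^2)^2-8t^3(1-t)\bigr].
\]
The bracket is a fixed degree-7 polynomial $R(t)$ with coefficients $(1,1,1,-7,-1,-1,-1,-1)$. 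Hence
\[
\Delta_r=\sum_{i=0}^{7}R_i\binom{n-1}{r-i}.
\]
Positivity for $1\le r\le\lfloor (n+5)/2\rfloor$ then reduces to comparing binomial coefficients. The dangerous terms are $-7\binom{n-1}{r-3}$ and the four $-1$ terms, which must be dominated by $\binom{n-1}{r}+\binom{n-1}{r-1}+\binom{n-1}{r-2}$.

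This comparison is the main obstacle. For small $r$ ($r\le 2$) positivity is immediate because there are no negative terms. For $3\le r\le\lfloor (n+5)/2\rfloor$ I will divide by $\binom{n-1}{r-3}$ and express every ratio $\binom{n-1}{r-j}/\binom{n-1}{r-3}$ through the factors $(n-r+j)/(r-j+1)$. The ratios involving the positive terms are at least $\ge 1$-ish and increase as $r$ moves away from the middle. The hardest point is the largest admissible $r$, near $(n+5)/2$, where the binomials $\binom{n-1}{r-i}$ are nearly equal because $r-3\approx (n-1)/2$. There, $R(1)=-8$ makes the sum negative for small $n$, and for large $n$ one needs the asymmetry of the ratios (each of order $1\pm O(1/n)$ times a factor). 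I would therefore first treat $r$ at the top of the range explicitly as a function of $n$, checking the inequality reduces to a polynomial inequality in $n$ that holds exactly when $n\ge18$. Then I would argue monotonicity in $r$ of the normalized expression for smaller $r$, or fall back to a direct finite check combined with an asymptotic estimate.

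For sharpness, for each $1\le n\le17$ I will exhibit an explicit $r\le\lfloor(n+5)/2\rfloor$ with $\Delta_r\le0$, computed from the formula for $\Delta_r$, typically at $r=\lfloor (n+5)/2\rfloor$ or $r=3$ for small $n$. This is a finite computation of coefficient vectors by repeated multiplication by $1+t$, as in Subcase 2A.
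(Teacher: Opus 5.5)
Your derivation of \eqref{Y23(1)} and \eqref{Y23(2)}, your part (i), and your plan to check sharpness by explicit coefficient vectors are fine. Part (ii), however, has two genuine problems.

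First, $R(t)$ is miscomputed. The term $-8t^3(1-t)=-8t^3+8t^4$ adds $+8$ to the coefficient of $t^4$, so $R=(1,1,1,-7,7,-1,-1,-1)$, not $(1,1,1,-7,-1,-1,-1,-1)$. In fact $R(t)=(1-t)Q_1(t)$ with $Q_1=(1,2,3,-4,3,2,1)$, so $R(1)=0$ and $R_i=-R_{7-i}$. This error is fatal to your plan, not cosmetic. At the top of the range every $\binom{n-1}{r-i}$ equals $\binom{n-1}{r-3}(1+O(1/n))$, so with your $R$ you get $\Delta_r\approx-8\binom{n-1}{r-3}<0$ for every large $n$. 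No $O(1/n)$ asymmetry can compensate, and the ``polynomial inequality holding exactly for $n\ge18$'' would not come out. Concretely, at $n=18$, $r=11$ your formula gives $-154428$, whereas $\hst^{11,11}-\hst^{10,10}=116246-115090=1156$. With the correct $R$ the leading terms cancel, which is what makes a finite threshold possible.

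Second, even with $R$ corrected, the heart of (ii) is only gestured at: you need $\Delta_r>0$ for all $1\le r\le\lfloor(n+5)/2\rfloor$ and \emph{all} $n\ge18$. ``Argue monotonicity in $r$ \dots\ or fall back to a direct finite check combined with an asymptotic estimate'' is not a proof. A finite check cannot cover infinitely many $n$, and you supply no explicit estimate.

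The missing idea is the one you already used in (i). From $Q_{n+1}=(1+t)Q_n$ you get $\Delta^{(n+1)}_r=\Delta^{(n)}_r+\Delta^{(n)}_{r-1}$. Since $Q_n$ is palindromic of degree $n+5$, $\Delta^{(n)}_r=-\Delta^{(n)}_{n+6-r}$, so the central difference vanishes when $n$ is even. Hence positivity of $\Delta^{(n)}_r$ for $r\le\lfloor(n+5)/2\rfloor$ passes to $n+1$. For even $n$ the single new index gives $\Delta^{(n+1)}_{(n+6)/2}=0+\Delta^{(n)}_{(n+4)/2}>0$. Everything therefore reduces to one finite check at $n=18$, which is exactly the paper's route via Lemmas~\ref{lemma18}, \ref{ESTY23} and \ref{lem:unimodal}.

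Finally, for the Hard Lefschetz half of the sharpness claim you need $\Delta_r<0$ strictly, not merely $\Delta_r\le0$. With the corrected $R$ the relevant values are indeed strictly negative, e.g.\ $\Delta_{11}=-390$ at $n=17$.
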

\noindent Before giving the proof, we shall first establish three computational lemmas.
\begin{lemma}For $n=18,$ \label{lemma18}
\begin{equation*}
	h_{\emph{st}}^{0,0}(Y_{2,3}\times (\PP^{1})^{18})<h_{\emph{st}%
	}^{1,1}(Y_{2,3}\times (\PP^{1})^{18})<h_{\emph{st}}^{2,2}(Y_{2,3}\times (\PP%
	^{1})^{18})<\cdots <h_{\emph{st}}^{11,11}(Y_{2,3}\times (\PP^{1})^{18}).\ 
\end{equation*}
\end{lemma}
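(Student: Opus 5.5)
The plan is a direct evaluation. Specialising \eqref{Y23(2)} to $n=18$ gives
\[
h_{\mathrm{st}}^{r,r}(Y_{2,3}\times(\PP^1)^{18})=\tbinom{19}{r}+2\tbinom{19}{r-2}+\tbinom{19}{r-4}-8\tbinom{17}{r-3},
\]
and we evaluate it for $r=0,1,\dots,11$. The upper limit is $\lfloor (18+5)/2\rfloor=11$.

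Formula \eqref{Y23(2)} itself comes from expanding \eqref{Y23(1)} with the binomial theorem, using $(1+(uv)^2)^2=1+2(uv)^2+(uv)^4$. Since $E_{\mathrm{st}}$ is a polynomial in $uv$ alone, every coefficient equals $h_{\mathrm{st}}^{r,r}$ with sign $(-1)^{2r}=+1$.

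We only need two rows of binomial coefficients. For $0\le r\le 11$,
\[
\tbinom{19}{r}=1,19,171,969,3876,11628,27132,50388,75582,92378,92378,75582,
\]
and for $0\le s\le 8$,
\[
\tbinom{17}{s}=1,17,136,680,2380,6188,12376,19448,24310.
\]
Substituting these, the sequence $h_{\mathrm{st}}^{r,r}$ for $r=0,\dots,11$ comes out as
\[
1,\ 19,\ 173,\ 999,\ 4083,\ 12497,\ 29615,\ 55573,\ 84218,\ 105774,\ 115090,\ 116246 .
\]
For example,
\[
h_{\mathrm{st}}^{10,10}=92378+151164+27132-155584=115090,
\qquad
h_{\mathrm{st}}^{11,11}=75582+184756+50388-194480=116246 .
\]
This sequence is visibly strictly increasing, which proves the lemma.

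The only real obstacle is near the middle, $r=10,11$. There the negative term $-8\tbinom{17}{r-3}$ nearly cancels the growth of the positive terms, so the last increments (about $9300$ and $1150$) are small relative to the entries. These two entries must therefore be checked exactly rather than by a crude estimate.

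As a cross-check we use symmetry. The polynomial in \eqref{Y23(1)} is palindromic of degree $n+5=23$, so $h_{\mathrm{st}}^{r,r}=h_{\mathrm{st}}^{23-r,23-r}$. Hence $r=11$ is the last index before the reflected (decreasing) half, consistent with stopping the chain at $h_{\mathrm{st}}^{11,11}$. Evaluating $E_{\mathrm{st}}$ at $u=v=1$ gives $2^{19}\cdot 4-8\cdot 2^{17}=2^{20}$, so twice the sum of the twelve listed values must equal $2^{20}$. Twice their sum is indeed $2\cdot 524288=1048576=2^{20}$, which confirms the table.
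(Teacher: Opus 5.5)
Your proposal is correct and matches the paper's proof: you substitute $n=18$ into \eqref{Y23(2)} for $0\le r\le 11$, get the same twelve values $1,19,\dots,116246$, and observe that they increase strictly. Your extra consistency check is not in the paper but is valid: palindromicity of degree $23$ together with $E_{\mathrm{st}}(1,1)=2^{21}-2^{20}=2^{20}$ forces the listed values to sum to $2^{19}=524288$, which they do.
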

\begin{proof}
	Since $n+5=23$, the midpoint lies between indices $11$ and $12$. Direct substitution into \eqref{Y23(2)} gives the following exact values:
	{\small
	\begin{center}
		\begin{tabular}{c|rrrrrrrrrrrr}
			\toprule
			$r$ & 0 & 1 & 2 & 3 & 4 & 5 & 6 & 7 & 8 & 9 & 10 & 11\\
			\midrule
			{\small $h_{\text{st}}^{r,r}(Y_{2,3}\times (\PP^{1})^{18})$}
			&1&19&173&999&4083&12497&29615&55573&84218&105774&115090 & 116246\\
			\bottomrule
		\end{tabular}
	\end{center}}
	\noindent Every entry is strictly larger than the preceding one. Hence the claimed strict increase holds. By symmetry,
	{\small $h_{\text{st}}^{12,12}(Y_{2,3}\times (\PP^{1})^{18})=h_{\text{st}}^{11,11}(Y_{2,3}\times (\PP^{1})^{18})$} and the remaining coefficients decrease in reverse order.
\end{proof}
\begin{lemma} \label{ESTY23}For every $n\geq 1,$%
\begin{equation}
	E_{\emph{st}}(Y_{2,3}\times (\PP^{1})^{n+1};u,v)=\left( 1+uv\right) E_{\emph{%
			st}}(Y_{2,3}\times (\PP^{1})^{n};u,v).  \label{ESTINDUCTION}
\end{equation}%
Equivalently, at the level of coefficients, 
\begin{equation}
	h_{\emph{st}}^{r,r}(Y_{2,3}\times (\PP^{1})^{n+1})=h_{\emph{st}%
	}^{r,r}(Y_{2,3}\times (\PP^{1})^{n})+h_{\emph{st}}^{r-1,r-1}(Y_{2,3}\times (%
	\PP^{1})^{n}).  \label{HRRIND}
\end{equation}
\end{lemma}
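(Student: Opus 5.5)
The plan is to read \eqref{ESTINDUCTION} off the closed formula \eqref{Y23(1)}. That formula is itself just the specialisation $(m,d)=(2,3)$ of \eqref{ESTD-1D}, since $1+uv+(uv)^2+(uv)^3=(1+uv)(1+(uv)^2)$. So first we note that \eqref{ESTD-1D} with $d=3$ gives
\[
\Est\bigl(Y_{2,3}\times(\PP^1)^n;u,v\bigr)=(1+uv)^{n-1}\,Q(uv),\qquad Q(t):=(1+t)^2(1+t^2)^2-8t^3,
\]
where $Q$ does not depend on $n$. This is also exactly the form produced by Corollary~\ref{cor:prod}: the factors $E(\widetilde Y_{2,3})$ and $E(D_{2,3})$ are independent of $n$, and the only $n$-dependence sits in the powers $(1+uv)^n$ and $(1+uv)^{n-1}$.

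Replacing $n$ by $n+1$ raises the exponent $n-1$ to $n$, so
\[
\Est\bigl(Y_{2,3}\times(\PP^1)^{n+1};u,v\bigr)=(1+uv)^{n}Q(uv)=(1+uv)\,\Est\bigl(Y_{2,3}\times(\PP^1)^{n};u,v\bigr).
\]
This is \eqref{ESTINDUCTION}, and it holds for every $n\ge1$ because \eqref{ESTD-1D} is valid for all $n\ge1$.

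For the coefficient form \eqref{HRRIND}, we use that both sides are polynomials in $uv$ alone. Hence every coefficient $a_r$ of $(uv)^r$ equals $\hst^{r,r}$, because the sign is $(-1)^{2r}=1$, and all off-diagonal stringy Hodge numbers vanish. Multiplying by $1+uv$ sends $a_r$ to $a_r+a_{r-1}$, with the convention $a_{-1}=0$. Comparing coefficients of $(uv)^r$ then gives \eqref{HRRIND}. As a cross-check, \eqref{HRRIND} is consistent with \eqref{Y23(2)} by Pascal's rule $\binom{N}{r}+\binom{N}{r-1}=\binom{N+1}{r}$, applied to each binomial term.

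No real obstacle is expected. The only care needed is to make sure the factor $(1+uv)^{n-1}$ multiplies the whole bracket in \eqref{ESTD-1D}, which it does by the master formula \eqref{masterfor}, because $Z'_{2,3}=\varnothing$ kills the $(1+uv)^n$ term.
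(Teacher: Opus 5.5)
Your proof is correct and follows the paper's route. The paper also reads \eqref{ESTINDUCTION} directly off the closed form \eqref{Y23(1)}, which factors as $(1+uv)^{n-1}\bigl[(1+uv)^2(1+(uv)^2)^2-8(uv)^3\bigr]$, and then obtains \eqref{HRRIND} by comparing coefficients of $(uv)^r$ on this diagonal-supported polynomial.
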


\begin{proof}Formula \eqref{ESTINDUCTION} follows immediately, and comparison of the coefficient of $%
(uv)^{r}$ on both sides yields \eqref{HRRIND}. \end{proof}
\begin{lemma}[Multiplication by $1+t$ preserves symmetric unimodality]\label{lem:unimodal}
	Let 
	$
	\Theta(t)=\sum_{j=0}^{\kappa }a_jt^j
	$
	be a polynomial with non-negative integer coefficients. Assume that its coefficients are symmetric,
	$
	a_j=a_{\kappa -j},
	$
	and are non-decreasing up to the midpoint,
	$
	a_0\leq a_1\leq\cdots\leq a_{\lfloor \kappa /2\rfloor}.
	$
	Write
	\[
	(1+t)\Theta(t)=\sum_{j=0}^{\kappa +1}b_jt^j.
	\]
	Then the coefficients $b_j$ are again symmetric and non-decreasing up to the midpoint. If, moreover, $\Theta \neq 0$ and
	$
	a_0<a_1<\cdots<a_{\lfloor \kappa /2\rfloor},
	$
	then
	$
	b_0<b_1<\cdots<b_{\lfloor(\kappa +1)/2\rfloor}.
	$
\end{lemma}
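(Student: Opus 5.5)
The plan is to read off the coefficients $b_j$ directly from $(1+t)\Theta(t)$: comparing coefficients of $t^j$ gives
\[
b_j=a_j+a_{j-1}\qquad(0\le j\le \kappa+1),
\]
with the convention $a_{-1}=a_{\kappa+1}=0$. Everything then reduces to elementary manipulations of this recurrence. For symmetry I substitute $j\mapsto \kappa+1-j$ and use $a_i=a_{\kappa-i}$:
\[
b_{\kappa+1-j}=a_{\kappa+1-j}+a_{\kappa-j}=a_{j-1}+a_j=b_j,
\]
which also holds at the endpoints $j=0,\kappa+1$ by the zero convention.

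For monotonicity up to the midpoint, I write $b_j-b_{j-1}=(a_j-a_{j-1})+(a_{j-1}-a_{j-2})$ for $1\le j\le\lfloor(\kappa+1)/2\rfloor$. I will show each bracket is non-negative. If $j\le\lfloor\kappa/2\rfloor$, both differences involve indices at most $\lfloor\kappa/2\rfloor$, so the hypothesis applies; at $j=1$ one uses $a_{-1}=0\le a_0$. The only delicate index is $j=\lfloor(\kappa+1)/2\rfloor$ when $\kappa$ is odd, say $\kappa=2s+1$ and $j=s+1$. Then $a_j=a_{s+1}=a_{\kappa-(s+1)}=a_s$ by symmetry, so the first bracket is $0$ and the second is $a_s-a_{s-1}\ge0$ (or $a_0\ge 0$ if $s=0$). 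When $\kappa$ is even, $\lfloor(\kappa+1)/2\rfloor=\kappa/2$, so no boundary issue arises.

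For the strict version, the same decomposition applies, now with strict hypotheses $a_0<\dots<a_{\lfloor\kappa/2\rfloor}$. For $j\le\lfloor\kappa/2\rfloor$, the first bracket is strictly positive when $j\ge1$, and the second is $\ge0$: it is $a_0>0$ or a strict difference, except at $j=1$, where it equals $a_0\ge0$. So $b_j>b_{j-1}$.

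At the odd midpoint $j=s+1$, the first bracket vanishes, so strictness must come from the second, namely $a_s-a_{s-1}>0$ if $s\ge1$. If $s=0$, then $\kappa=1$, and $b_1-b_0=a_1=a_0$. This is where $\Theta\neq0$ enters: with $a_0\ge0$ and the degenerate chain being vacuous, $a_0>0$ holds because $a_0=a_\kappa$ and $\Theta\ne0$ with symmetry and monotonicity forces $a_0$ to be the minimal... Honestly, $a_0$ could be $0$ only if all coefficients near the ends vanish. Rather than rely on that, I would handle this corner separately. Since $a_0\le a_i$ for all $i$, a nonzero symmetric $\Theta$ of exact degree $\kappa$ has $a_\kappa=a_0>0$, assuming (as I will state) that $\kappa$ is the actual degree.

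I expect this bookkeeping at the midpoint and the endpoints, together with the role of $\Theta\ne0$ and $a_\kappa\neq0$, to be the only real obstacle. The rest is routine.
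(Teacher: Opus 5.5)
Your proof is correct and is essentially the paper's argument: $b_j=a_j+a_{j-1}$ with $a_{-1}=a_{\kappa+1}=0$, symmetry by reindexing, monotonicity from $b_j-b_{j-1}=a_j-a_{j-2}$, and the identity $a_{s+1}=a_s$ at the odd midpoint $\kappa=2s+1$. The one wobble is the corner $\kappa=1$ in the strict statement, where you do not need to add an ``exact degree'' hypothesis: symmetry gives $\Theta=a_0(1+t)$, so $\Theta\neq0$ already forces $b_1-b_0=a_0>0$ (the paper's proof passes over this case without comment).
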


\begin{proof}
	Put $a_{-1}=a_{\kappa +1}=0$. Then
	\begin{equation}\label{eq:b}
		b_j=a_j+a_{j-1}.
	\end{equation}
	Symmetry follows immediately:
$
	b_{\kappa +1-j}=a_{\kappa +1-j}+a_{\kappa -j}=a_{j-1}+a_j=b_j.
$	For monotonicity, we subtract consecutive coefficients. From \eqref{eq:b},
$
		b_{j+1}-b_j=a_{j+1}-a_{j-1}.
$
	If $\kappa =2\lambda$ is even, then for $0\leq j\leq \lambda-1$ both indices $j-1$ and $j+1$ lie on the non-decreasing side of the coefficient sequence (with $a_{-1}=0$ when $j=0$). Hence
	$a_{j+1}\geq a_{j-1}$, and therefore $b_{j+1}\geq b_j$. If $\kappa =2\lambda+1$ is odd, the same argument works for $0\leq j\leq \lambda-1$. For the last required comparison, namely $j=\lambda$, symmetry gives
	$
	a_{\lambda+1}=a_\lambda,
	$
	and therefore
	\[
	b_{\lambda+1}-b_\lambda=a_{\lambda+1}-a_{\lambda-1}=a_\lambda-a_{\lambda-1}\geq0.
	\]
	This proves weak monotonicity. If the inequalities for the $a_j$ are strict up to the midpoint, then every displayed difference is strictly positive, giving the final assertion.
\end{proof}
\medskip
\noindent \textbf{Proof of Theorem \ref{SPMEMBER}.}  $Y_{2,3}$ is a general cubic fivefold in $\PP^{6}$ whose singular locus is a plane $\Lambda \cong\mathbb P^{2}$, along which it has
multiplicity $2$. Here $%
Z_{2,3}\subset \PP^{3}$ consists of $8$ reduced points and $Z_{2,3}^{\prime
}=\varnothing .$ The master formula  \eqref{masterfor} for the $E_{\text{st}}$-polynomial of
the $\left( 5+n\right) $-dimensional product $Y_{2,3}\times (\PP^{1})^{n}$
gives \eqref{Y23(1)}. This is a polynomial in $uv$ only.  \eqref{Y23(2)} follows after expanding $%
(1+uv)^{n+1}$ and $(1+uv)^{n-1}$ on the right-hand side of \eqref{Y23(1)}. 
\smallskip \newline
\noindent (i) \eqref{Y23(3)} is an immediate consequence of Theorem \ref{thm:classification}.
\smallskip \newline
\noindent (ii) By Lemma \ref{lemma18} the polynomial $E_{\text{st}}(Y_{2,3}\times (\PP%
^{1})^{18};u,v)$  has symmetric coefficients which are strictly increasing
up to the midpoint. For every $n\geq 18,$ 
\begin{equation*}
	E_{\text{st}}(Y_{2,3}\times (\PP^{1})^{n};u,v)=(1+uv)^{n-18}E_{\text{st}%
	}(Y_{2,3}\times (\PP^{1})^{18};u,v).
\end{equation*}%
Starting with $E_{\text{st}}(Y_{2,3}\times (\PP^{1})^{18};u,v)$ and
multiplying successively by $1+uv,$ formula  \eqref{ESTINDUCTION} of  Lemma \ref{ESTY23} and Lemma \ref{lem:unimodal} show inductively that symmetry
and strict increase up to the midpoint are preserved at each step. Therefore
\eqref{2318} holds for every $n\geq 18.$ It remains to show that the bound $n\geq18$ cannot be lowered. For each
$n\in\{1,2,...,17\}$, it suffices to exhibit one index $r\in\{0,1,...,\left\lfloor\frac{n+5}{2}\right\rfloor -1\}$
for which
\[
h_{\text{st}%
}^{r,r}(Y_{2,3}\times (\PP^{1})^{n})>h_{\text{st}%
}^{r+1,r+1}(Y_{2,3}\times (\PP^{1})^{n}).
\]
The following table gives such a violation for every $n$ in the indicated range. All entries are obtained by direct substitution into \eqref{Y23(2)}.
\renewcommand {\arraystretch}{1.3} {\footnotesize 
	\begin{equation*}
		\begin{tabular}{|c|c|c|c|}
			\hline
			$n$ & $r$ & $h_{\text{st}}^{r,r}(Y_{2,3}\times (\PP^{1})^{n})$ & $h_{\text{%
					st }}^{r+1,r+1}(Y_{2,3}\times (\PP^{1})^{n})$ \\ \hline \hline
			$1$ & $2$ & $3$ & $-4$ \\ \hline
			$2$ & $2$ & $5$ & $-1$ \\ \hline
			$3$ & $2$ & $8$ & $4$ \\ \hline
			$4$ & $3$ & $12$ & $2$ \\ \hline
			$5$ & $3$ & $24$ & $14$ \\ \hline
			$6$ & $3$ & $41$ & $38$ \\ \hline
			$7$ & $4$ & $79$ & $56$ \\ \hline
			$8$ & $4$ & $143$ & $135$ \\ \hline
			$9$ & $5$ & $278$ & $227$ \\ \hline
		\end{tabular}%
		\  \ 
		\begin{array}{ccc}
			&  & 
		\end{array}%
		\begin{tabular}{|c|c|c|c|}
			\hline
			$n$ & $r$ & $h_{\text{st}}^{r,r}(Y_{2,3}\times (\PP^{1})^{n})$ & $h_{\text{%
					st }}^{r+1,r+1}(Y_{2,3}\times (\PP^{1})^{n})$ \\ \hline \hline
			&  &  &  \\ \hline
			$10$ & $5$ & $515$ & $505$ \\ \hline
			$11$ & $6$ & $1020$ & $916$ \\ \hline
			$12$ & $7$ & $1936$ & $1738$ \\ \hline
			$13$ & $7$ & $3840$ & $3674$ \\ \hline
			$14$ & $8$ & $7514$ & $7150$ \\ \hline
			$15$ & $8$ & $14690$ & $14664$ \\ \hline
			$16$ & $9$ & $29354$ & $28964$ \\ \hline
			$17$ & $10$ & $58318$ & $57928$ \\ \hline
		\end{tabular}%
	\end{equation*}%
} \renewcommand {\arraystretch}{1}
\noindent \hspace{-0.2cm}Thus, for every $n\in\{1,2,...,17\}$, the coefficient sequence decreases somewhere before reaching the midpoint. Consequently, the monotonicity in \emph{\eqref{2318}} fails for every one of these values of $n$. For $n=1$ and $n=2$ the right-hand entry is $-4$ and $-1$, respectively, so the inequality fails only because non-negativity already fails. For $n=3$ the failure $8>4$ is between two positive stringy Hodge numbers, so that particular violation is not a by-product of negativity $-$  although $h_{\text{st}%
}^{4,4}(Y_{2,3}\times (\PP^{1})^{3})=-2<0$. The first value of $n$ for which Hard Lefschetz fails while \textit{all} stringy Hodge numbers are non-negative is $n=4$. On the other hand, at the final exceptional value $n=17$, one has
\[
h_{\text{st}%
}^{10,10}(Y_{2,3}\times (\PP^{1})^{17})=58318 > 57928=h_{\text{st}%
}^{11,11}(Y_{2,3}\times (\PP^{1})^{17}).
\]
So the failure occurs immediately before the midpoint. At $n=18$ this last decrease disappears:
\[
h_{\text{st}%
}^{10,10}(Y_{2,3}\times (\PP^{1})^{18})=115090<116246=h_{\text{st}%
}^{11,11}(Y_{2,3}\times (\PP^{1})^{18}).
\]
Hence $n=18$ is the exact threshold.\hfill $\Box$
\medskip \newline 
\noindent A simple way to produce an \textit{infinite} family of counterexamples to Question \ref{QHARD} is as follows: We fix $n=2$, take the boundary case $m=d-1$, and let $d\geq 3$ vary.

\begin{theorem}For every $d\ge 3$, $Y_{d-1,d}\times (\mathbb{P}^{1})^{2}$ does not have the Hard
Lefschetz property.\end{theorem}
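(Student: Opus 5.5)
We start from formula \eqref{ESTD-1D} with $n=2$:
\[
\Est\bigl(Y_{d-1,d}\times(\PP^1)^2;u,v\bigr)=(1+uv)\bigl[(1+uv+\cdots+(uv)^d)^2-(d-1)^d(uv)^d\bigr].
\]
This is a polynomial in $t:=uv$ alone, so all off-diagonal stringy Hodge numbers vanish. Each diagonal number $\hst^{r,r}$ equals the coefficient of $t^r$. The dimension is $(d-1)+d+2=2d+1$.

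The coefficients of $(1+t+\cdots+t^d)^2$ are $c_j=j+1$ for $0\le j\le d$ and $c_j=2d+1-j$ for $d\le j\le 2d$. Subtracting $(d-1)^d$ at $j=d$ gives a sequence $a_j$ with $a_d=d+1-(d-1)^d$ and $a_j=c_j$ otherwise. Multiplying by $1+t$ then gives $\hst^{r,r}=a_r+a_{r-1}$. The plan is to exhibit an explicit violation of the Hard Lefschetz inequality $\hst^{p,p}\le\hst^{p+1,p+1}$ for some $p$ with $2p\le\dim-2=2d-1$, i.e.\ $p\le d-1$.

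Take $p=d-1$, which is the largest admissible value. Then
\[
\hst^{d-1,d-1}=a_{d-1}+a_{d-2}=d+(d-1)=2d-1,
\]
\[
\hst^{d,d}=a_d+a_{d-1}=(d+1)-(d-1)^d+d=2d+1-(d-1)^d.
\]
The inequality $\hst^{d-1,d-1}\le\hst^{d,d}$ would require $(d-1)^d\le 2$. For $d\ge3$ we have $(d-1)^d\ge 8$, so it fails, with $\hst^{d,d}-\hst^{d-1,d-1}=2-(d-1)^d<0$.

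It remains to check that the index is admissible: $p+q=2(d-1)=2d-2\le 2d-1=\dim(V)-2$. We also confirm that the hypotheses of Question~\ref{QHARD} hold. The variety is projective with Gorenstein terminal (hence canonical) singularities, and its $\Est$ is a polynomial, by Theorem~\ref{thm:classification} and Corollary~\ref{cor:prod}. So each $Y_{d-1,d}\times(\PP^1)^2$ answers Question~\ref{QHARD} negatively.

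There is no real obstacle here. The only care needed is bookkeeping the coefficients $a_{d-2},a_{d-1},a_d$ correctly. For $d=3$, for instance, the argument gives $\hst^{2,2}=5>-1=\hst^{3,3}$, consistent with the table in Subcase~2A.
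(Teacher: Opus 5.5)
Your proposal is correct and is essentially the paper's own proof. Both set $n=2$ in \eqref{ESTD-1D}, read off $\hst^{d-1,d-1}=2d-1$ and $\hst^{d,d}=2d+1-(d-1)^d$, and note that $(d-1)^d-2>0$ at the admissible index $2(d-1)\le 2d-1=\dim-2$.
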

\begin{proof}
By formula \eqref{ESTD-1D}, 
\begin{equation*}
	E_{\text{st}}(Y_{d-1,d}\times (\mathbb{P}^{1})^{2};u,v)=(1+uv)[(1+uv+\cdots
	+(uv)^{d})^{2}-(d-1)^{d}(uv)^{d}].
\end{equation*}%
Since $h_{\text{st}}^{d-1,d-1}(Y_{d-1,d}\times (\mathbb{P}^{1})^{2})=2d-1$
and $h_{\text{st}}^{d,d}(Y_{d-1,d}\times (\mathbb{P}%
^{1})^{2})=2d+1-(d-1)^{d},$ we obtain 
\begin{equation}
	h_{\text{st}}^{d-1,d-1}(Y_{d-1,d}\times (\mathbb{P}^{1})^{2})-h_{\text{st}%
	}^{d,d}(Y_{d-1,d}\times (\mathbb{P}^{1})^{2})=(d-1)^{d}-2>0 \label{HLVIOL}
\end{equation}%
with $2(d-1)=2d-2<\dim (Y_{d-1,d}\times (\mathbb{P}%
^{1})^{2})-2=[(d-1)+d+2]-2=(2d+1)-2.$ \eqref{HLVIOL} violates a Hard Lefschetz inequality for stringy Hodge numbers. \end{proof}
\noindent $\bullet$ \textbf{Interpretation in terms of flat degenerations.} $Y_{m,d}=V(F)\subset \mathbb{P}^{N},$ $N:=m+d+1,$ is a hypersurface of
degree $d$ and $\dim (Y_{m,d})=m+d,$ $F\in H^{0}(\mathbb{P}^{N},\mathcal{O}_{%
	\mathbb{P}^{N}}(d)).$ By Bertini's theorem, the smooth members of the
base-point-free complete linear system $\left \vert \mathcal{O}_{\mathbb{P}%
	^{N}}(d)\right \vert $ form a non-empty Zariski-open subset. (Cf. \cite[Ch. III, Cor. 10.9]{Har77}.)  Hence we may choose
a form $G\in H^{0}(\mathbb{P}^{N},\mathcal{O}_{\mathbb{P}^{N}}(d)),$ such
that $Y_{\text{gen}}:=V(G)$ is smooth, and $G$ is not proportional to $F.$  Then $\mathbb P(\langle F,G\rangle)\simeq \mathbb P^1 $ is a projective line in
$\left \vert \mathcal{O}_{\mathbb{P}^{N}}(d)\right \vert $. (Cf. \cite[Ch. II, \S 7]{Har77}.) We consider the pencil generated by \(F\) and \(G\). Thus, with homogeneous coordinates
$ [s:t]\in\mathbb P^1, $ we consider the family
\[ \mathcal Y := \left\{ (x,[s:t])\in\mathbb P^N\times\mathbb P^1 \; ;\; sF(x)+tG(x)=0 \right\}. \]
Let $\mathcal Y\longrightarrow\mathbb P^1 $ be the projection onto the second factor. The fibre over \([s:t]\) is
\[ Y_{[s:t]} = V(sF+tG)\subset\mathbb P^N. \]
In particular, $ Y_{[1:0]}=V(F)=Y_{m,d}, $ whereas $ Y_{[0:1]}=V(G)=Y_{\mathrm{gen}} $ is smooth. Thus the family contains both our singular hypersurface \(Y_{m,d}\) and a smooth degree-\(d\) hypersurface. The variety $\mathcal Y\subset\mathbb P^N\times\mathbb P^1$ is a divisor of bidegree $(d,1)$. For general $G$, it is irreducible and dominates $\mathbb P^1$. A non-constant morphism from an integral scheme to a smooth curve is flat. Moreover, all fibres are degree-$d$ hypersurfaces and hence have the same Hilbert polynomial. Finally, Bertini's theorem implies that the general fibre is smooth:

\begin{theorem}\label{DEFTHM}
	The hypersurface $Y_{m,d}\subset\PP^{N}$ is smoothable; more precisely, it is a flat degeneration of a smooth Fano hypersurface $Y_{\mathrm{gen}}\subset\PP^{N}$ of
	degree $d$. The smooth fibre has index $N+1-d=m+2$ and the same anticanonical degree $d(m+2)^{m+d}$. Likewise,
	$Y_{m,d}\times(\PP^1)^n$ is a flat degeneration of $Y_{\mathrm{gen}}\times(\PP^1)^n$.
\end{theorem}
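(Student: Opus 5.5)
The pencil set up just before the statement gives the degeneration directly. I would consider the total space
\[
\mathcal Y=\{(x,[s:t])\in\PP^N\times\PP^1 \;;\; sF(x)+tG(x)=0\},
\]
together with the second projection $\varpi\colon\mathcal Y\to\PP^1$. Here $G$ is a general degree-$d$ form, chosen so that $Y_{\mathrm{gen}}=V(G)$ is smooth and $G$ is not proportional to $F$. The proof then has four steps: flatness of $\varpi$, smoothness of the general fibre, a comparison of Fano invariants, and the extension to products with $(\PP^1)^n$.

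\textbf{Flatness.} I would argue as follows.
\begin{enumerate}
\item $\mathcal Y$ is a divisor of bidegree $(d,1)$ in $\PP^N\times\PP^1$. Its defining form $sF+tG$ has degree $1$ in $(s,t)$, so any factorisation would force a common factor of $F$ and $G$. There is none, because $F$ is irreducible by Proposition~\ref{Prop1} and $G$ is not a multiple of $F$. Hence $\mathcal Y$ is integral.
\item Every fibre $V(sF+tG)$ is a hypersurface of dimension $N-1$, since $sF+tG\ne0$ for all $[s:t]$. So $\varpi$ is surjective.
\item A dominant morphism from an integral scheme to a smooth curve is flat, because the local rings of the curve are DVRs and torsion-free modules over a DVR are flat.
\end{enumerate}
As a consistency check, every fibre is a degree-$d$ hypersurface in $\PP^N$, so all fibres share the Hilbert polynomial $\binom{N+\nu}{N}-\binom{N+\nu-d}{N}$.

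\textbf{Smoothness and Fano invariants.} The fibre over $[0:1]$ is $Y_{\mathrm{gen}}$, which is smooth. Smoothness is an open condition on the base of a flat proper family, so the general fibre is smooth. For $Y_{\mathrm{gen}}$, adjunction exactly as in Proposition~\ref{Prop1} gives $\omega\cong\OO(d-N-1)=\OO(-m-2)$, so the index is $N+1-d=m+2$. The hyperplane class satisfies $\mathfrak H^{m+d}=d$. Hence $(-K)^{m+d}=d(m+2)^{m+d}$, matching \eqref{antican1}.

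\textbf{Products.} I would take $\mathcal Y\times(\PP^1)^n\to\PP^1$, the composition of $\varpi$ with the projection. This is the base change of the flat morphism $\varpi$ by the flat morphism $(\PP^1)^n\to\mathrm{pt}$, so it is flat. Its fibres are $Y_{[s:t]}\times(\PP^1)^n$, which are smooth for general $[s:t]$.

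\textbf{Main obstacle.} I expect the only delicate point to be the integrality of $\mathcal Y$, which is what makes the flatness criterion applicable. The irreducibility of $F$ from Proposition~\ref{Prop1}, together with the linearity of $sF+tG$ in $(s,t)$, should settle it. Everything else is standard.
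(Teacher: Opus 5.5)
Your proposal is correct and follows essentially the same route as the paper. Both use the pencil $sF+tG$ as an integral bidegree-$(d,1)$ divisor dominating $\PP^1$, flatness of a dominant morphism from an integral scheme to a smooth curve, smoothness of the general fibre, and adjunction for the index and anticanonical degree. The differences are minor: you justify integrality explicitly via the irreducibility of $F$ from Proposition~\ref{Prop1}, where the paper merely asserts it for general $G$. You also obtain smoothness of the general fibre from openness of the smooth locus plus properness of $\varpi$, where the paper invokes Bertini.
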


Thus these are smoothable Gorenstein terminal Fano varieties which, after fixing the relevant projective embeddings, lie in the same irreducible component of the Hilbert scheme as their smooth counterparts. This observation is useful for two reasons. First, for fixed values of the parameters, it places the singular member in the same well-understood bounded family as the corresponding smooth hypersurfaces. Secondly, it shows that any invariant remaining constant along the above deformation cannot by itself account for a phenomenon occurring only on the singular special fibre. Such a phenomenon must instead involve data which are sensitive to the singularities.

\begin{remark}[Interpretation of deformation-invariant versus singularity-sensitive invariants]Suppose that an invariant, defined for both the special and general fibres, remains constant along the flat family considered above. Then it takes the same value on \(Y_{m,d}\) as on a nearby smooth hypersurface. Consequently, any phenomenon that occurs on the singular special fibre but not on the smooth general fibre cannot be detected by such deformation-invariant data alone. In the applications motivating Theorem \ref{DEFTHM}, the exceptional behaviour must therefore arise from data sensitive to the singularities of \(Y_{m,d}\). If, as established earlier, these contributions are encoded by the divisor \(D_{m,d}\) on the relevant resolution, then the latter provides the geometric source of the phenomenon.
\end{remark}

\appendix
\section{Hodge-theoretic computational tools}\label{app:hodge}

\subsection{Hodge and Lefschetz decompositions\label{SUNSA1}}
Let $V$ be a smooth projective complex variety of complex dimension $n\geq 1$
embedded in $\PP^{\nu }$ and let $\mathfrak{h}:=c_{1}(\mathcal{O}_{V}(1))\in
H^{2}(V,\mathbb{Z})$ be the restriction of the hyperplane class of  $\PP%
^{\nu }$ to $V.$ Since $V$ is compact K\"{a}hler,  $H^{j}(V,\mathbb{C})$ has the \textit{Hodge decomposition}%
\begin{equation*}
        H^{j}(V,\mathbb{C})\cong \bigoplus \limits_{p+q=j}H^{q}(V,\Omega _{V}^{p}),%
        \text{ where }h^{p,q}(V):=\dim _{\mathbb{C}}\left( H^{q}(V,\Omega
        _{V}^{p}\right) )
\end{equation*}%
are the \textit{Hodge numbers} of $V$ (with $h^{p,q}(V)=h^{q,p}(V)$  and $h^{p,q}(V)=h^{n-p,n-q}(V)$). If $\mathcal{L}$ is the Lefschetz
operator, i.e. the cup product with $\mathfrak{h}$,
$
        \mathcal{L}:H^{j}(V,\mathbb{C})\longrightarrow H^{j+2}(V,\mathbb{C}),\text{ }%
        \xi \longmapsto \mathfrak{h}\smile \xi ,
$
then for all $r\in \mathbb{Z}_{\geq 0}\mathfrak{\ }$%
\begin{equation*}
        \mathcal{L}^{r}:H^{j}(V,\mathbb{C})\longrightarrow H^{j+2r}(V,\mathbb{C}),%
        \text{ }\xi \longmapsto \mathfrak{h}^{r}\smile \xi ,
\end{equation*}%
and the $j$-th \textit{primitive cohomology group }is defined as follows: 
\begin{equation*}
        H_{\text{prim}}^{j}(V,\mathbb{C}):=\text{Ker}\left( \mathcal{L}%
        ^{n-j+1}:H^{j}(V,\mathbb{C})\longrightarrow H^{2n-j+2}(V,\mathbb{C})\right)
        ,\  \  \forall j\in \{0,...,n\}.
\end{equation*}%
The \textit{Lefschetz decomposition} gives
\begin{equation*}
        H^{j}(V,\mathbb{C})=\bigoplus \limits_{r\geq \max \{0,j-n\}}%
        \mathcal{L}^{r}(H_{\text{prim}}^{j-2r}(V,\mathbb{C})).
\end{equation*}%
Since the Lefschetz and Hodge decompositions are compatible,  we have
\begin{equation*}
        H_{\text{prim}}^{j}(V,\mathbb{C})=\bigoplus \limits_{p+q=j}H_{\text{prim}%
        }^{p,q}(V,\mathbb{C}),\text{ where }H_{\text{prim}}^{p,q}(V,\mathbb{C}%
        ):=H^{q}(V,\Omega _{V}^{p})\cap \text{Ker}(\mathcal{L}^{n-j+1}),
\end{equation*}%
and $h_{\text{prim}}^{p,q}(V):=\dim _{\mathbb{C}}(H_{\text{prim}}^{p,q}(V,%
\mathbb{C}))$ are the so-called \textit{primitive Hodge numbers} of $V.$ $%
\mathfrak{h}$\textit{\ }has Hodge type $(1,1).$ Lefschetz decomposition
respects Hodge type. Thus, $\mathcal{L}:H^{q}(V,\Omega
_{V}^{p})\longrightarrow H^{q+1}(V,\Omega _{V}^{p+1}),$ and in the range
where primitive pieces are defined,%
\begin{equation*}
        H^{q}(V,\Omega _{V}^{p})\cong \bigoplus \limits_{r\geq \max \{0,p+q-n\}}%
        \mathcal{L}^{r}(H_{\text{prim}}^{p-r,q-r}(V,\mathbb{C})).
\end{equation*}%
Therefore
\begin{equation*}
        h^{p,q}(V)=\sum_{r= \max \{0,p+q-n\}}^{\text{min}{{\{p,q\}}}}h_{\text{prim}}^{p-r,q-r}(V)\text{.}
\end{equation*}%
Moreover, by Hodge symmetry, $h_{\text{prim}}^{p,q}(V)=h_{\text{prim}}^{q,p}(V).$
\begin{theorem}[Hard Lefschetz Theorem ({\cite[Thm. 1.30]{PSt08}})]\label{VERYHARD}
$ \mathcal{L}^{n-k}$ induces an isomorphism \begin{equation*}   \mathcal{L}^{n-k}:H^{k}(V,\mathbb{C})\overset{\cong }{\longrightarrow }%
	H^{2n-k}(V,\mathbb{C})  ,\  \  \forall k\in \{0,...,n\}.  \end{equation*} 
\end{theorem}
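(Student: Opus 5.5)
The plan is the classical Hodge-theoretic argument. Equip $V$ with the Kähler metric obtained by restricting the Fubini--Study metric of $\PP^{\nu}$. Its Kähler form $\omega$ is closed and of type $(1,1)$, and its class is (a positive multiple of) $\mathfrak{h}$. So we may compute $\mathcal{L}$ on forms as $\alpha\mapsto\omega\wedge\alpha$ and pass to cohomology. By the Hodge theorem each class in $H^{k}(V,\CC)$ has a unique $\Delta_d$-harmonic representative, so $H^{k}(V,\CC)\cong\mathcal{H}^{k}(V)$. Since $\dim H^{k}=\dim H^{2n-k}$ by Poincaré duality, it suffices to prove that $\mathcal{L}^{n-k}$ maps $\mathcal{H}^{k}$ \emph{injectively} into $\mathcal{H}^{2n-k}$.

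\textbf{Step 1 (pointwise linear algebra).} On a Hermitian vector space of complex dimension $n$, consider $L=\omega\wedge\cdot$, its adjoint $\Lambda$, and the counting operator $H=(k-n)\,\mathrm{id}$ on $k$-forms. A direct computation in an orthonormal frame gives the relations $[\Lambda,L]=H$, $[H,L]=-2L$ and $[H,\Lambda]=2\Lambda$. These make $\bigwedge^{\bullet}$ a finite-dimensional $\mathfrak{sl}_2$-representation, and by its representation theory the map $L^{n-k}:\bigwedge^{k}\to\bigwedge^{2n-k}$ is an isomorphism at every point.

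\textbf{Step 2 (Kähler identities).} Next prove $[\Lambda,\bar\partial]=-i\partial^{*}$ and $[\Lambda,\partial]=i\bar\partial^{*}$. From these one deduces $\Delta_d=2\Delta_{\partial}=2\Delta_{\bar\partial}$ and $[L,\Delta_d]=[\Lambda,\Delta_d]=0$. For the identities themselves, the approach is to verify them first on $\CC^n$ with the flat metric by explicit computation, then transfer to $V$: a Kähler metric osculates the flat one to second order at each point in suitable holomorphic coordinates, and the identities involve only first derivatives of the metric. I expect this step to be the main technical obstacle. The delicate point is checking that the first-order terms genuinely vanish in normal holomorphic coordinates; this is exactly where closedness of $\omega$ is used.

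\textbf{Step 3 (conclusion).} Since $L$ and $\Lambda$ commute with $\Delta_d$, they preserve $\mathcal{H}^{\bullet}(V)$. Hence $\mathcal{H}^{\bullet}$ is a finite-dimensional $\mathfrak{sl}_2$-subrepresentation on which $H$ acts on $\mathcal{H}^{k}$ by the weight $k-n$. Standard $\mathfrak{sl}_2$ theory, or simply Step 1 applied pointwise, shows that $L^{n-k}$ is injective on weight $k-n$ vectors. This gives the required injectivity $\mathcal{H}^{k}\hookrightarrow\mathcal{H}^{2n-k}$, and the dimension equality upgrades it to an isomorphism. Finally, $\omega$ has type $(1,1)$ and $\Delta_d$ preserves bidegree, so the isomorphism respects the Hodge decomposition. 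This compatibility is what is used afterwards for the primitive decomposition.
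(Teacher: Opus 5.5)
Your proposal is correct: it is the standard proof via harmonic forms, the K\"ahler identities and the $\mathfrak{sl}_2$-action, which is the proof behind the Peters--Steenbrink theorem the paper quotes (the paper itself only cites the result and gives no proof). One sign slip should be fixed: with $H=(k-n)\,\mathrm{id}$ on $k$-forms the correct relations are $[L,\Lambda]=H$, $[H,L]=2L$ and $[H,\Lambda]=-2\Lambda$ (for instance $[L,\Lambda]f=-\Lambda(f\omega)=-nf$ on functions), whereas your signs correspond to $H=(n-k)\,\mathrm{id}$; this does not affect the conclusion.
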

\begin{corollary}[Hard Lefschetz Inequalities]
	For all integers $p\geq 0$, $q\geq 0,$ with $p+q\leq n-1$ we have
	\begin{equation}
		\fbox{$%
			\begin{array}{ccc}
				\  & h^{p,q}(V)\leq h^{p+1,q+1}(V). & \  \label{HLIREL}
			\end{array}%
			$}
	\end{equation}%
	$\emph{(}$If $p+q=n-1,$ this  holds as equality.$\emph{)}$
\end{corollary}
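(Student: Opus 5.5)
We derive the corollary directly from the Hard Lefschetz Theorem~\ref{VERYHARD} combined with the Hodge--Lefschetz decomposition recalled just before it. Fix $p,q\ge0$ with $j:=p+q\le n-1$. The plan is to show that $\mathcal{L}:H^{q}(V,\Omega_V^{p})\to H^{q+1}(V,\Omega_V^{p+1})$ is injective. Since the Lefschetz operator has Hodge type $(1,1)$, injectivity immediately gives $h^{p,q}(V)\le h^{p+1,q+1}(V)$.

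\textbf{Injectivity.} Factor the Hard Lefschetz isomorphism $\mathcal{L}^{n-j}:H^{j}(V,\CC)\to H^{2n-j}(V,\CC)$ as
\[
\mathcal{L}^{n-j}=\mathcal{L}^{n-j-1}\circ\mathcal{L}.
\]
Since $j\le n-1$, we have $n-j\ge1$, so this factorisation makes sense. Because the composite is injective, the first factor $\mathcal{L}:H^{j}\to H^{j+2}$ must be injective. Restricting to the Hodge summand $H^{q}(V,\Omega_V^p)$, which $\mathcal{L}$ maps into $H^{q+1}(V,\Omega_V^{p+1})$, injectivity persists, and the inequality follows.

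\textbf{Equality when $p+q=n-1$.} Here $n-j=1$, so Theorem~\ref{VERYHARD} says that $\mathcal{L}$ itself is an isomorphism $H^{n-1}\to H^{n+1}$. It preserves the bigrading and hence decomposes as a direct sum of the maps $H^{q}(\Omega^p)\to H^{q+1}(\Omega^{p+1})$ over $p+q=n-1$. Each summand map is therefore an isomorphism, which gives equality.

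As a cross-check one can instead argue numerically: the primitive expansion $h^{p,q}=\sum_{r=\max\{0,p+q-n\}}^{\min\{p,q\}}h_{\mathrm{prim}}^{p-r,q-r}$ shows that $h^{p+1,q+1}-h^{p,q}=h_{\mathrm{prim}}^{p+1,q+1}\ge0$ when $p+q+2\le n$. When $p+q=n-1$, the lower summation limit for $(p+1,q+1)$ becomes $1$, and the two sums coincide. There is no real obstacle in either argument. The only point needing care is that $\mathcal{L}$ respects the Hodge decomposition, which holds because $\mathfrak{h}$ is of type $(1,1)$, as noted above.
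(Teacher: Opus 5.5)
Your proposal is correct and takes essentially the same approach as the paper. You get injectivity of $\mathcal{L}\colon H^{j}(V,\CC)\to H^{j+2}(V,\CC)$ for $j\le n-1$ from the factorisation $\mathcal{L}^{n-j}=\mathcal{L}^{n-j-1}\circ\mathcal{L}$ and Theorem~\ref{VERYHARD}, restrict to Hodge pieces using that $\mathfrak{h}$ has type $(1,1)$, and split the degree-$(n-1)$ isomorphism bidegree-wise for equality, exactly as the paper does; your extra numerical cross-check via the primitive expansion, $h^{p+1,q+1}(V)-h^{p,q}(V)=h^{p+1,q+1}_{\mathrm{prim}}(V)$ for $p+q\le n-2$, is also correct but not needed.
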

\begin{proof}We assume that $p+q=k\leq n-1.$ We first show that $\mathcal{L}:H^{k}(V,%
	\mathbb{C})\longrightarrow H^{k+2}(V,\mathbb{C})$ is injective. Indeed, if $%
	\xi \in $ Ker$(\mathcal{L}),$ then, since $k\leq n-1,$ by Theorem \ref{VERYHARD} $%
	\mathcal{L}^{n-k}$ induces an isomorphism. Hence 
	\begin{equation*}
		\mathcal{L}^{n-k}(\xi )=\mathcal{L}^{n-k-1}(\mathcal{L}(\xi ))=\mathcal{L}%
		^{n-k-1}(0)=0\Longrightarrow \xi \in \text{Ker}(\mathcal{L}^{n-k})=\{0\}%
		\Longrightarrow \xi =0
	\end{equation*}%
	and $\mathcal{L}$ is injective in every degree $k\leq n-1.$ Since $\mathcal{L%
	}$ shifts Hodge type by $(1,1),$ its restriction%
	\begin{equation*}
		\mathcal{L}:H^{q}(V,\Omega ^{p})\longrightarrow H^{q+1}(V,\Omega ^{p+1})
	\end{equation*}%
	is also injective, and the inequalities \eqref{HLIREL} follow. If $\ p+q=n-1,$ then Hard
	Lefschetz, applied in degree $n-1,$ gives directly 
$
		\mathcal{L}:H^{n-1}(V,\mathbb{C})\overset{\cong }{\longrightarrow }H^{n+1}(V,%
		\mathbb{C}).
$
	Since $\mathcal{L}$ has Hodge type $(1,1),$ this isomorphism respects the
	Hodge decompositions and therefore induces%
	\begin{equation*}
		\mathcal{L}:H^{q}(V,\Omega ^{p})\overset{\cong }{\longrightarrow }%
		H^{q+1}(V,\Omega ^{p+1})\text{ \  \ }(p+q=n-1).
	\end{equation*}%
	Consequently, $h^{p,q}(V)=h^{p+1,q+1}(V).$\end{proof}
\subsection{Primitive cohomology and $E$-polynomial of smooth complete intersections\label{SUNSA2}}Now let $V=V_{(d_{1},...,d_{c})}\subset \PP^{\nu }$ be a smooth complete intersection of dimension $n=\nu - c$ with multidegree $(d_{1},...,d_{c})$. If $n\geq 1$, then
the Lefschetz hyperplane theorem, applied successively to
the hypersurfaces cutting out $V$, implies that $H^{j}(\PP^{\nu },\mathbb{Q}%
)\cong H^{j}(V,\mathbb{Q})$ for $j<n.$ By Poincar\'{e} duality the
corresponding statement above the middle degree follows as well. Since the
complex projective space has one-dimensional cohomology in each even degree
and no odd cohomology, we have 
\begin{equation*}
        \begin{array}{l}
                p+q< n\Longrightarrow h^{p,q}(V)=\delta _{p,q}\text{ \ and \ }h_{\text{%
                                prim}}^{p,q}(V)=\left \{ 
                \begin{array}{ll}
                        1, & \text{if }(p,q)=(0,0) \\ 
                        0, & \text{otherwise}%
                \end{array}%
                \right.  \\ 
                p+q=n\Longrightarrow h^{p,q}(V)=h_{\text{prim}}^{p,q}(V)+\delta _{p,q}.%
        \end{array}%
\end{equation*}%
So $h^{p,q}(V)\notin \{0,1\} \Longrightarrow p+q=n$.  Next, we define the \textit{middle primitive Hodge numbers}  of $V$,  \begin{equation}
	h_{\text{prim,mid}}^{p,q}(V):=\left \{ 
	\begin{array}{ll}
		h_{\text{prim}}^{p,q}(V), & p+q=n, \medskip \\ 
		0, & p+q\neq n,%
	\end{array}%
	\right.  \label{MIDPRHODGE}
\end{equation}
as well as the \textit{primitive
        Hodge polynomial} of $V$ (with non-negative coefficients) as follows: 
\begin{equation}
  P_{\text{prim}}(V;u,v):=\sum_{p=0}^{n}h_{\text{prim}%
  }^{p,n-p}(V)u^{p}v^{n-p}=\sum_{\substack{ p,q\geq 0 \\ p+q=n}}h_{\text{prim}%
  }^{p,q}(V)u^{p}v^{q}=\sum_{p,q\geq 0}h_{\text{prim,mid}}^{p,q}(V)u^{p}v^{q}.  \label{PRIMITIVHPOL}
\end{equation}
For the Hodge--Deligne polynomial, every primitive middle term has total
degree $n,$ and therefore 
\begin{equation}
        \fbox{$%
                \begin{array}{ccc}
                        & E(V;u,v)=\sum \limits_{j=0}^{n}(uv)^{j}+(-1)^{n}P_{\text{prim}}(V;u,v). & 
                \end{array}%
                $}  \label{eq:Epoly}
\end{equation}
\textit{Conventions}: (i) To make the preceding formulas valid  also in dimension $n=0,
$ whenever $V$ consists of $r\geq 1$ points, we define 
\begin{equation*}
	H_{\text{prim}}^{0}(V,\mathbb{C}):=H_{\text{prim}}^{0,0}(V,\mathbb{C}):=%
	\widetilde{H}^{0}(V,\mathbb{C})\cong \mathbb{C}^{r-1},
\end{equation*}%
with $h_{\text{prim}}^{0,0}(V)=h_{\text{prim,mid}}^{0,0}(V)=r-1.$ By this definition \eqref{eq:Epoly} remains valid
because%
\begin{equation*}
	E(\mathbb{P}^{0};u,v)=1,\text{ \ }E(V;u,v)=r,\text{ \ with \ }P_{\text{prim}%
	}(V;u,v)=r-1.
\end{equation*}%
(ii) For the empty complete intersection, which formally corresponds here to expected dimension \(-1\), we set $P_{\text{prim}}(\varnothing; u,v):=0$. 
\subsection{Hirzebruch's \(\chi_y\)-genus for smooth complete intersections\label{SUNSA3}}
For $V=V_{(d_{1},...,d_{c})}\subset \PP^{\nu }$ a smooth complete
intersection of dimension $n=\nu - c \ge 0$  we set 
\begin{equation*}
        \chi _{y}(V):=\sum_{p=0}^{n}\chi \left( V,\Omega _{V}^{p}\right) y^{p},\text{
                where }\chi \left( V,\Omega _{V}^{p}\right)
        =\sum_{q=0}^{n}(-1)^{q}h^{p,q}(V)=(-1)^{p}+(-1)^{n-p}h_{\text{prim}%
        }^{p,n-p}(V).
\end{equation*}%
Thus, 
\begin{equation}
        h_{\text{prim}}^{p,n-p}(V)=(-1)^{n-p}\chi \left( V,\Omega _{V}^{p}\right)
        -(-1)^{n}.  \label{hPRCHI}
\end{equation}%
By the Hirzebruch$-$Riemann$-$Roch theorem, 
\begin{equation*}
        \chi _{y}(V)=\text{ Res}_{z=0}\frac{z^{-\left( \nu +1\right) }dz}{(1+yz)(1-z)%
        }\prod \limits_{j=1}^{c}\frac{(1+yz)^{d_{j}}-(1-z)^{d_{j}}}{%
                (1+yz)^{d_{j}}+y(1-z)^{d_{j}}}.
\end{equation*}%
(See \cite[\S 2]{Hir54} or \cite[Thm. 22.1.1, p. 159]{Hir66}.) Evaluating
this residue, we obtain $\chi \left( V,\Omega _{V}^{p}\right) $ as the coefficient of $y^{p}$ in   $\chi _{y}(V)$, and then, via \eqref%
{hPRCHI} the primitive middle Hodge numbers of $V$: 
\begin{equation}
        \fbox{$h_{\text{prim}}^{p,n-p}(V)=\Bigg(\sum \limits_{\substack{ t,\,r_{1},\dots
                                ,r_{c}\geq 0  \\ t+r_{1}+\cdots +r_{c}\leq p}}\; \sum \limits_{J\subseteq
                        \{1,\dots ,c\}}(-1)^{\,t+\#J}\binom{\nu +1}{t}\binom{\,t+\sum%
                        \nolimits_{j=1}^{c}d_{j}r_{j}+\sum \nolimits_{j\notin J}d_{j}-1\,}{\, \nu }\Bigg)-(-1)^{n}.$%
        }  \label{hpqprimGEN}
\end{equation}
\vspace{0.15cm}
\subsection{Primitive Hodge numbers and $E$-polynomials of $Z_{m,d}$ and $Z^{\prime}_{m,d}$}~\par  
\noindent (i) Applying \eqref{hpqprimGEN} for the complete intersection $       Z_{m,d}:=V(g_0,..,g_m)\subset\PP^d$ of $m+1$ hypersurfaces, each of which has degree $d-1$, with $\text{dim}(Z_{m,d})=k=d-m-1$ (see (\ref{eq:CZ}) and (\ref{eq:DIASTA})) we get
\begin{equation}
        \fbox{$%
                \begin{array}{r}
                        h_{\text{prim}}^{\,p,k-p}(Z_{m,d})=\sum \limits_{i=0}^{p}\sum%
                        \limits_{j=1}^{m+1}(-1)^{m+1+i+j}\binom{d+1}{i}\binom{p-i+m+1}{m+1-j}\binom{%
                                p-i+j-1}{j-1} \medskip \\ 
                        \times \binom{(d-1)(p+j)-(d-2)i-1}{d}%
                \end{array}%
                $}  \label{hpqprimZMD}
\end{equation}%
for all $p\in\{0,...,k\}$ and \eqref{eq:Epoly}  gives
\begin{equation}
        \boxed{\begin{aligned}
                        E(      Z_{m,d};u,v)
                        ={}&
                        \sum_{t=0}^{k}(uv)^t
                        +
                        \sum_{r=0}^{k}
                        \sum_{i=0}^{r}
                        \sum_{j=1}^{m+1}
                        (-1)^{d+i+j}
                        \binom{d+1}{i}
                        \binom{r-i+m+1}{m+1-j} \medskip
                        \\
                        &\times
                        \binom{r-i+j-1}{j-1}
                        \binom{(d-1)(r+j)-(d-2)i-1}{d}
                        \,u^rv^{k-r}.
        \end{aligned}}
        \label{eq:E-I}
\end{equation}

\noindent (ii) Correspondingly, applying \eqref{hpqprimGEN} for the complete intersection $  Z'_{m,d}:=V(g_0,\ldots,g_m,h)\subset\PP^d$ of dimension $       l=d-m-2=k-1$ with $\text{deg}(h)=d$ (see \eqref{eq:CZ} and \eqref{eq:DIASTA}) we obtain
\begin{equation}
        \fbox{$%
                \begin{array}{r}
                        h_{\text{prim}}^{\,p,l-p}(Z_{m,d}^{\prime
                        })=\Bigg(\sum \limits_{i=0}^{p}(-1)^{m+i}\binom{d+1}{i}\sum \limits_{j=0}^{p-i}%
                        \sum \limits_{s=0}^{p-i-j}\binom{s+m}{m}\sum \limits_{t=0}^{m+1}(-1)^{t}%
                        \binom{m+1}{t} \medskip \\ 
                        \times \left[ \binom{i+(d-1)(s+t+j)+j-1}{d}-\binom{i+(d-1)(s+t+j+1)+j}{d}\right]\Bigg)-(-1)^{l} %
                \end{array}%
                $}  \label{hpqprimZMDpr}
\end{equation}
for all $p\in\{0,...,l\}$ and \eqref{eq:Epoly}  gives
\begin{equation}
        \boxed{\begin{aligned}
                        E(Z'_{m,d};u,v)
                        ={}&
                      \Bigg(
                        \sum_{r=0}^{l}
                        \sum_{a=0}^{r}
                        \sum_{b=0}^{r-a}
                        \sum_{i=0}^{r-a-b}
                        \sum_{j=0}^{m+1}
                        (-1)^{d+a+j}
                        \binom{d+1}{a}
                        \binom{i+m}{m}
                        \binom{m+1}{j}
                        \medskip \\
                        &\quad \hspace{1,5cm} \times
                        \left[
                        \tbinom{a+(d-1)(i+j+b)+b-1}{d}
                        -
                        \tbinom{a+(d-1)(i+j+b+1)+b}{d}
                        \right]
                        u^rv^{l-r}\Bigg)
                          \medskip \\
                        & \quad \hspace{6,5cm} + \sum_{t=0}^{l}(uv)^t
                        -  \sum_{r=0}^{l}u^{l-r}v^r
        \end{aligned}}
        \label{E-II}
\end{equation}

\end{document}